\def\Bibtex{{\rm B\kern-.05em{\sc i\kern-.025em b}\kern-0.08em T\kern-.1667em\lower.7ex\hbox{E}\kern-.125emX}}
\newtheorem{theorem}{Theorem}
\newtheorem{lemma}[theorem]{Lemma}
\newtheorem{corollary}[theorem]{Corollary}
\newtheorem{claim}[theorem]{Claim}
\newtheorem{bigthm}{Theorem}
\theoremstyle{definition}
\newtheorem{definition}[theorem]{Definition}
\newtheorem{example}[theorem]{Example}
\newtheorem{remark}[theorem]{Remark}
\numberwithin{theorem}{section}
\numberwithin{equation}{section}
\DeclareMathOperator \PSH {{\rm PSH}}
\DeclareMathOperator \exph {{\rm exph}}
\DeclareMathOperator \ric {{\rm Ric}}
\DeclareMathOperator \Null {{\rm Null}}
\DeclareMathOperator \NK {E_\textrm{{nH}}}
\DeclareMathOperator \Z {\textrm{{Sing}}}
\DeclareMathOperator \nh {E_\textrm{{nH}}}
\DeclareMathOperator \tr {tr}
\def\f{\varphi}
\def\dc{\mathrm{dd^c}}
\def\ddbar{\partial\Bar{\partial}}
\begin{document}
	\title[Hermitian null loci]{Hermitian null loci} 
	\author{Quang-Tuan Dang}
\address{Yau Mathmatical Sciences Center, Tsinghua University, Beiing, China 100084} \email{dangquangtuan10@gmail.com $\&$ dangqt@mail.tsinghua.edu.cn}

%   \address{Courant Institute of Mathematical Sciences, New York University
% 251 Mercer St, New York, NY 10012}
% \email{tosatti@cims.nyu.edu}
 
	 \keywords{Hermitian currents, null loci, Chern--Ricci flows, complex manifolds}
	% %\thanks{The work is partially supported by the ANR project PARAPLUI}
	 	\date{\today}
	\subjclass[2020]{32W20, 53E30, 32U15}
\begin{abstract}
   % We show that the non-K\"ahler locus of a nef and big (not closed) $(1,1)$-form on a compact complex manifold equals the union of all subvarieties where the restriction of form is not big (null locus). 
   We establish a transcendental generalization of Nakamaye's theorem to compact complex manifolds when the form is not assumed to be closed. 
   We apply the recent 
   analytic technique developed by Collins--Tosatti to
   show that the non-Hermitian locus of a nef and big $(1,1)$-form, which is not necessarily closed, on a compact complex manifold equals the union of all positive-dimensional analytic subvarieties where the restriction of the form is not big (null locus). As an application, we can give an alternative proof of the Nakai--Moishezon criterion of Buchdahl and Lamari for complex surfaces and generalize this result in higher dimensions.
   % This is also used for studying 
   % degenerate complex Monge--Amp\`ere equations on compact Hermitian manifolds. 
   Finally, we investigate finite time non-collapsing singularities of the Chern--Ricci flow, partially answering a question raised by Tosatti--Weinkove.  
   %non-collapsing degenerations of Ricci-flat Hermitian metrics on non-K\"ahler Calabi-Yau varieties.
\end{abstract}

\maketitle

      \tableofcontents

\section{Introduction}

Characterizing the augmented base locus of a nef and big line bundle on a smooth algebraic variety over $\mathbb{C}$ was initially motivated by applications to the diophantine approximation problem~\cite{Nakamaye-book}. Nakamaye showed in~\cite{nakamaye00-base-loci} that the augmented base locus of a nef and big line bundle on a projective manifold equal to its null locus (which is the union of all irreducible subvarieties where the restriction of the bundle is not big). There are numerous generalizations of the latter result in  $\mathbb{R}$-divisors~\cite{Ein-Laz-Mus-Nak-Pop09-base}, in possibly singular varieties~\cite{Birkar17-baselocus,Cacciola-Lopez14-logcanonical,boucksom-cacciola-lopez14-baseloci}, and in positive characteristics~\cite{Casini-McKernan-Mutaca14-baselocus}. We refer interested readers to~\cite{tosatti18-nakamaye} for a comprehensive overview.

Our primary goal in this paper is to extend Nakamaye's theorem to the context of compact non-K\"ahler manifolds with non-closed differential forms.
Let $X$ be a compact complex manifold of dimension $n$, equipped with a Hermitian metric $\omega_X$. 
We denote by ${\rm d^c}=\frac{1}{2i}(\partial-\Bar{\partial})$ so that $\dc=i\ddbar$.
Let $\theta$ be a smooth real (1,1) form and let $[\theta]_{\ddbar}$ denote the  $\partial\Bar{\partial}$-class of forms (resp. currents): $[\theta]_{\ddbar}=\{\theta+\dc \f: u\in \mathcal{C}^\infty(X,\mathbb{R})\; (\text{resp.}\, u\in L^1(X))\}.$ For simplicity, we omit the subscript $\ddbar$.
When $\theta$ is closed, the $\ddbar$-class $[\theta]$ coincides with its Bott--Chern cohomology class in $H^{1,1}_{\rm BC}(X,\mathbb{R})$.
A $\partial\Bar{\partial}$-class $[\theta]$ is said to be {\em Hermitian} (or {\em positive}) if it contains at least a positive definite form, i.e., $\exists\,\f\in\mathcal{C}^\infty(X,\mathbb{R})$ such that $\theta+\dc\f$ is a Hermitian metric.  Although $\theta$ is not closed in general, one can still meaningfully define the concept (in the analytic sense) of nef $\partial\Bar{\partial}$-classes; cf. Def.~\ref{def: nef}. Similarly, a $\partial\Bar{\partial}$-class $[\theta]$ is said to be {\em big} if it contains a positive current that dominates a Hermitian metric. 
It should be emphasized that the previous concepts can be defined when the underlying variety is possibly singular; cf. Sect.~\ref{sect: singularspace}. 
By Demailly's regularization theorem, the Hermitian current in a big class can be taken to have (almost) analytic singularities (in the sense of ~\cite[Theorem 3.2]{demailly2004numerical}). In particular, the Hermitian current is smooth in the complement of a Zariski closed subset of $X$. The smallest possible Zariski closed subset of $X$ that can be taken in this way is the non-Hermitian locus of a big $\ddbar$-class $[\theta]$, denoted by $\NK(\theta)$; cf.~\cite[Definition 3.16]{boucksom2004divisorial}.

A nef and big $\partial\Bar{\partial}$-class $[\theta]$, in general, may not be positive, and another subset to measure its non-positivity is its null locus, denoted by $\Null(\theta)$, explicitly introduced by Nakamaye. The latter subset is defined as the union of all positive-dimensional irreducible analytic subvarieties $V\subset X$ where the restriction $\theta|_V$ is not big. Clearly, the null locus is a proper subset of  $X$. Repeating line by line the proof of Collins--Tosatti~\cite{collins2015kahler}, one can prove that given a nef and big $\ddbar$-class $[\theta]$, its non-Hermitian locus and its null locus coincide. The proof does not require the closedness of the representative form.

\begin{theorem}[{\cite[Theorem 1.1]{collins2015kahler}}]
    Let $X$ be a compact complex manifold. 
    % Assume that $v_-(X,\omega_X)<+\infty$ and $v_-
    % (V,\omega_X|_V)>0$ for any positive dimension submanifold $V\subset X$. 
    Let $\theta$ be a smooth real (1,1) form whose $\ddbar$-class $[\theta]$ is nef and big. Then
    \[\NK(\theta)=\Null(\theta). \]
\end{theorem}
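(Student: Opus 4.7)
The strategy is to adapt the Collins--Tosatti argument of \cite{collins2015kahler} to the Hermitian setting, verifying at each step that closedness of $\theta$ is never essential. I split the proof into two inclusions.

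For the easy inclusion $\Null(\theta)\subset \NK(\theta)$, I argue contrapositively. Let $V\subset X$ be a positive-dimensional irreducible analytic subvariety with $V\not\subset \NK(\theta)$. The Hermitian version of Demailly's regularization is local in nature and applies without modification, so $[\theta]$ contains a Hermitian current $T=\theta+\dc\f$ with analytic singularities whose singular set is precisely $\NK(\theta)$. Since $V\not\subset\NK(\theta)$, the restriction $T|_V$ is a well-defined Hermitian current on $V$, so $[\theta|_V]$ is big and hence $V\not\subset \Null(\theta)$.

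For the reverse inclusion, I fix $x_0\notin\Null(\theta)$ and aim to build a Hermitian current in $[\theta]$ smooth near $x_0$, following the Collins--Tosatti mass concentration scheme. The first step is to choose a modification $\mu\colon\tilde X\to X$, realized as a finite composition of blow-ups with smooth centers, so that $\mu^{-1}(x_0)$ is a simple normal crossing divisor $\bigcup_k E_k$. Equipping $\tilde X$ with an auxiliary Hermitian metric $\tilde\om$, I then solve the family of complex Monge--Amp\`ere equations
\[ (\mu^*\theta+\e\tilde\om+\dc\f_\e)^n=c_\e\, f_\e\,\tilde\om^n, \]
with densities $f_\e$ chosen to concentrate mass at $\mu^{-1}(x_0)$. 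Tosatti--Weinkove's existence theory applies directly in the Hermitian category, so closedness of $\mu^*\theta$ plays no role here. A weak subsequential limit $T_\infty$ of the Hermitian currents $\mu^*\theta+\e\tilde\om+\dc\f_\e$ then lies in $[\mu^*\theta]$, and the hypothesis $x_0\notin\Null(\theta)$ enters at this point: combined with the nefness of $[\theta]$ and Siu's decomposition, it forces the generic Lelong numbers of $T_\infty$ along each exceptional divisor $E_k$ to vanish, so $\mu_*T_\infty$ is a Hermitian current on $X$ with vanishing Lelong number at $x_0$. A last application of Demailly regularization then produces a Hermitian current in $[\theta]$ with analytic singularities avoiding $x_0$, showing $x_0\notin\NK(\theta)$.

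The main technical points to verify in the Hermitian setting are the handling of torsion terms involving $\partial\theta$ and $\Bar{\partial}\theta$, which are absent in the K\"ahler case and appear in the integration-by-parts identities used for uniform estimates. These turn out to be harmless: the $C^0$-bound of Tosatti--Weinkove controls $\f_\e$ uniformly in $\e$, so the torsion contributions are dominated by the fixed Hermitian form $\tilde\om$ and vanish in the limiting procedure. The nefness of $[\theta]$ in the $\ddbar$-sense of Def.~\ref{def: nef} supplies the $\e$-plurisubharmonic potentials required to control volumes and to bound Lelong numbers along each $E_k$, playing exactly the role of cohomological nefness in the original argument, so the Collins--Tosatti proof indeed transfers line by line.
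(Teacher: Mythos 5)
Your first inclusion $\Null(\theta)\subset\NK(\theta)$ is correct and coincides with the paper's argument (restrict a Hermitian current with almost analytic singularities, smooth near $x$, to $V$ and contradict non-bigness of $\theta|_V$). The problem is the reverse inclusion, where your proposed route is not the Collins--Tosatti argument and, as written, has a genuine gap. Their proof of this statement is not a mass-concentration scheme: it is an extension theorem for Hermitian (K\"ahler) currents with analytic singularities on subvarieties, and the paper follows it verbatim. One takes $V$ to be the irreducible component of $\NK(\theta)$ through the point $x$, uses the hypothesis $x\notin\Null(\theta)$ to get a Hermitian current with almost analytic singularities in $[\theta|_V]$ on $V$, and then extends it (via Hironaka resolution, Richberg gluing of local potentials with the same singularity type, and a reduction of Lelong numbers where nefness enters) to a Hermitian current on $X$ in $[\theta]$ that is smooth near the generic point of $V$; this contradicts $V\subset\NK(\theta)\subset\Z(\widetilde T)$. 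Your scheme never uses the actual hypothesis in this way: bigness of $\theta|_V$ for the subvarieties $V$ through $x_0$ appears nowhere in your construction.

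Concretely, the step ``the hypothesis $x_0\notin\Null(\theta)$, combined with nefness and Siu's decomposition, forces the generic Lelong numbers of $T_\infty$ along each $E_k$ to vanish'' is an assertion with no mechanism behind it, and it is precisely the hard part. Worse, concentrating the densities $f_\e$ at $\mu^{-1}(x_0)$ is the Demailly--P\u aun device for \emph{producing} a current with positive Lelong number at $x_0$ (used to prove bigness), i.e.\ the opposite of what you need; a weak limit of such solutions will typically be singular along $\mu^{-1}(x_0)$, not smooth there. In addition, Siu's decomposition applies to closed positive currents, whereas here $T_\infty=\mu^*\theta+\dots$ is not closed since $\theta$ is not; and the usual way the null-locus hypothesis feeds into Lelong-number estimates in the K\"ahler case is through intersection numbers $\int_V\theta^{\dim V}$, which are simply not available (not class-invariant, not even well defined as numerical data) for a non-closed $\theta$ --- this is exactly why the paper's proof avoids any numerical input and works with the extension theorem instead. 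Your final step (a Hermitian current with zero Lelong number at $x_0$ yields, after Demailly regularization, one with analytic singularities avoiding $x_0$) is fine, but the construction of such a current is the missing content. To repair the proof you should replace the mass-concentration step by the extension argument: glue local extensions of the potential of a Hermitian current on the component $V$ of $\NK(\theta)$ through $x_0$, using nefness of $[\theta]$ only to absorb the small losses of positivity and to lower the generic Lelong numbers along the exceptional components, as in the paper's Theorem on extension of Hermitian currents.
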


In the K\"ahler case (or the Fujiki class $\mathcal{C}$ in general), the bigness of a closed nef form $\theta$ is equivalent to the positivity property of its self-intersection number, i.e., $\int_X\theta^n>0$.  Unlike the K\"ahler case,
the difficulty we face is that the volume of two forms in the same $\ddbar$-class could be different in general. Let $X$ be a compact complex manifold of dimension $n$, equipped with a Hermitian metric $\omega_X$.
A question raised by Guedj--Lu~\cite{guedj2022quasi2} is whether
\[v_+(X,\omega_X)=\sup\left\{\int_X(\omega_X+\dc\f)^n: \f\in\mathcal{C}^\infty(X), \omega_X+\dc\f>0 \right\}<+\infty; \]
% is finite and 
\[v_-(X,\omega_X)=\inf\left\{\int_X(\omega_X+\dc\f)^n: \f\in\mathcal{C}^\infty(X), \omega_X+\dc\f>0 \right\}>0. \]
% is bounded away from zero. 
We refer to~\cite{angella2022plurisigned} for some classes of manifolds that satisfy the condition above. 
We say that a Hermitian metric $\omega_X$ satisfies the volume condition~\eqref{H} if: \begin{equation*} \tag{$H_X$}\label{H}
    v_+(X,\omega_X)<+\infty \quad\text{and}\; v_-
     (V,\omega_X|_V)>0,
\end{equation*} for all closed positive-dimensional submanifolds $V\subset X$.

% \medskip
% \noindent{\bf Condition H.} \label[H]{H} $v_+(X,\omega_X)<+\infty$ and $v_-
%      (V,\omega_X|_V)>0$ for all positive-dimensional closed submanifold $V\subset X$.

\begin{remark}

It has been shown in~\cite[Theorem A]{angella2022plurisigned} that for any closed  submanifold $V\subset X$, $$v_+(X,\omega_X)<+\infty\Longrightarrow v_+(V,\omega_X|_V)<+\infty.$$
   It is important to emphasize that the assumption~\eqref{H} occurs in some contexts of complex differential geometry. For instance, if $\omega_X$ satisfies the following condition $\dc\omega_X=0$, ${\rm d}\omega_X\wedge {\rm d^c}\omega_X=0$ then by~\cite{chiose2016invariance}, it is equivalent to the volume with respect to a form belonging to the $\ddbar$-class $[\omega_X]$ is unchanged. That is,  for any closed submanifold $V\subset X$,
   \[ \int_V(\omega_X|_V+\dc\f)^{\dim V}=\int_V\omega_X|_V^{\dim V},\forall\;{\omega_X}|_V+\dc\f>0. \]
       These metrics, introduced by Guan and Li~\cite{guan2010complex}, are referred to as the Guan--Li metrics in this article (cf. also~\cite{FT11-astheno}).
   %These metrics were introduced by Guan--Li~\cite{guan2010complex}, which are called the Guan--Li metrics in this paper, although they were probably studied earlier; cf.~\cite{FT11-astheno}.
\end{remark}
Given $\theta$ a smooth real (1,1) form on $X$, if $\theta$ is nef (cf. Def.~\ref{def: nef}), we can define
\[\widehat v_-(X,\theta):=\inf_{\varepsilon>0}v_-(X,(1-\varepsilon)\theta+\varepsilon\omega_X).\]
Although the form $(1-\varepsilon)\theta+\varepsilon\omega_X$ is not positive, one could find, by definition, for any $\varepsilon>0$, a smooth function $\f_\varepsilon$ such that $\theta+\dc\f_\varepsilon\geq -\varepsilon\omega_X$ so that $$(1-\varepsilon)\theta+\varepsilon\omega_X+\dc(1-\varepsilon)\f_\varepsilon\geq\varepsilon^2\omega_X.$$ Hence, the quantity $v_-(X,(1-\varepsilon)\theta+\varepsilon\omega_X)$ can be understood as $v_-(X,(1-\varepsilon)\theta+\varepsilon\omega_X+\dc(1-\varepsilon)\f_\varepsilon)$. Our main theorem is the following.
% A characterization of a nef and big class is the following
\begin{bigthm}\label{thmA}
    Let $X$ be a compact complex manifold equipped with a Hermitian metric $\omega_X$. 
    % Let $\theta$ be a smooth real (1,1) form whose $\ddbar$-class $[\theta]$ is nef and big.
     Assume that $\omega_X$ satisfies the condition~\eqref{H}. Let $\theta$ be a 
     smooth real (1,1) form whose $\ddbar$ class is nef with $\widehat v_-(X,\theta)>0$. 
     Then $[\theta]$ is big and
     \begin{equation}\label{eq: NH=Null=}
         \NK(\theta) =\Null(\theta)=\bigcup_{\widehat v_-(V,\theta|_V)=0} V,
     \end{equation}
     where the union is taken over all purely positive-dimensional irreducible analytic subvarieties $V$ of $ X$.
\end{bigthm}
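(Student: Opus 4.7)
The plan has three ingredients: (i) prove that $[\theta]$ is big under the volume hypothesis $\widehat v_-(X,\theta)>0$; (ii) invoke the Collins--Tosatti theorem recalled just above to conclude $\NK(\theta) = \Null(\theta)$; and (iii) identify both loci with the indicated union by reducing bigness on subvarieties to the same volume criterion.

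Step (i) is where the actual work lies, and I would attack it via a mass-concentration scheme in the spirit of Chiose and Tosatti--Weinkove, adapted to the non-closed Hermitian setting. Using nefness of $[\theta]$, for every $\varepsilon>0$ one chooses $\f_\varepsilon$ so that $\theta_\varepsilon:=(1-\varepsilon)\theta+\varepsilon\omega_X+(1-\varepsilon)\dc\f_\varepsilon \geq \varepsilon^2\omega_X$, and the hypothesis $\widehat v_-(X,\theta)>0$ supplies a uniform lower bound $\int_X\theta_\varepsilon^n \geq c > 0$. One then solves a family of Hermitian complex Monge--Amp\`ere equations $(\theta_\varepsilon+\dc u_\varepsilon)^n = f_\varepsilon\,\omega_X^n$ whose right-hand sides concentrate mass on a prescribed point (or on a compact subset chosen to avoid a given subvariety), normalized by $\sup_X u_\varepsilon = 0$. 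The upper bound $v_+(X,\omega_X)<+\infty$ from condition~\eqref{H} controls the error terms that appear because neither $\omega_X$ nor $\theta$ is closed, while the lower bound $v_-(V,\omega_X|_V)>0$ prevents total mass collapse as $\varepsilon\to 0$. A subsequential limit yields a quasi-psh function $u$ with $\theta+\dc u \geq \delta\omega_X$ as currents for some $\delta>0$, witnessing bigness. Step (ii) is then a direct application of the Collins--Tosatti theorem recalled above.

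For step (iii), the remark following \eqref{H} together with \eqref{H} itself guarantees that for every positive-dimensional smooth submanifold $V\subset X$ the restricted metric $\omega_X|_V$ again satisfies the analogous condition $(H_V)$; for singular $V$ one reduces to this by passing to a resolution of singularities within the singular-space framework of \S\ref{sect: singularspace}, noting that nefness restricts. Applying step (i) to the pair $(V,\theta|_V)$ then yields the equivalence $\widehat v_-(V,\theta|_V)>0 \Longleftrightarrow [\theta|_V]$ is big, which together with the definition of $\Null(\theta)$ gives the asserted union. The principal obstacle is step (i): the Chiose-style mass-concentration argument crucially exploits Stokes' theorem and the invariance of self-intersection numbers inside a K\"ahler cohomology class, neither of which is available here, so the non-closedness error terms must be carefully absorbed using \eqref{H}; handling singular subvarieties in step (iii) is a more routine secondary issue once the singular-space setup is in place.
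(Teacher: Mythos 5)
Your decomposition (bigness via a mass-concentration criterion, then $\NK(\theta)=\Null(\theta)$ by the Collins--Tosatti argument, then identification of $\Null(\theta)$ with the volume-theoretic union) is a legitimate route, and steps (i)--(ii) match results the paper has available (Theorem~\ref{thm: guedjlu}, i.e.\ Guedj--Lu's criterion proved via Lamari's Hahn--Banach duality, Gauduchon metrics and Popovici's inequality, and Theorem~\ref{thm: main}). But step (iii) has a genuine gap: you claim that ``applying step (i) to the pair $(V,\theta|_V)$ yields the equivalence $\widehat v_-(V,\theta|_V)>0 \Longleftrightarrow [\theta|_V]$ is big,'' whereas step (i) only gives one implication, namely $\widehat v_-(V,\theta|_V)>0 \Rightarrow [\theta|_V]$ big. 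The reverse implication is exactly what is needed for the inclusion $\bigcup_{\widehat v_-(V,\theta|_V)=0}V\subset \Null(\theta)$ (equivalently $N\subset\NK(\theta)$ in the paper's notation), and in the Hermitian, non-closed setting it is not formal: Monge--Amp\`ere masses are not invariants of the $\ddbar$-class, so the existence of a single (possibly very singular) Hermitian current $\theta|_V+\dc\psi\geq\omega_V$ does not by itself bound $\inf_u\int_V(\theta|_V+\varepsilon\omega_V+\dc u)^{\dim V}$ away from $0$. The paper proves this implication by a separate pluripotential argument: for each smooth competitor $u$ it forms the envelope $v=P_{\omega_V}(u-\psi)$, uses that $(\omega_V+\dc v)^{\dim V}$ is concentrated on the contact set $\{v=u-\psi\}\subset\{\psi>-\infty\}$ together with the Bedford--Taylor maximum principle (Lemma~\ref{lem: max-princ}) to get $\int_V(\omega_V+\dc v)^{\dim V}\leq\int_V(\theta|_V+\varepsilon\omega_V+\dc u)^{\dim V}$, and concludes $\widehat v_-(V,\theta|_V)\geq v_-(V,\omega_X|_V)>0$ using precisely the lower-volume half of~\eqref{H}. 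Nothing in your proposal supplies this argument (or a citation covering it), so the equality $\Null(\theta)=\bigcup_{\widehat v_-(V,\theta|_V)=0}V$ is not established.

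Two smaller points. First, you place the hypothesis $v_-(V,\omega_X|_V)>0$ inside step (i) (``prevents total mass collapse''), but the bigness criterion uses only $v_+<+\infty$ and $\widehat v_-(X,\theta)>0$; the lower bound from~\eqref{H} is needed exactly for the missing converse above, so its role is misattributed. Second, in step (i) the final claim that a subsequential limit of the concentrated Monge--Amp\`ere solutions directly produces $u$ with $\theta+\dc u\geq\delta\omega_X$ glosses over the actual mechanism in the Hermitian case: one does not extract the Hermitian current as a weak limit, but obtains it from Lamari's duality after proving a uniform positive lower bound for $\int_X\theta\wedge\eta^{n-1}$ against all Gauduchon metrics $\eta$ (this is where Popovici's inequality and $v_+(X,\omega_X)<+\infty$ enter). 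Since this is precisely Theorem~\ref{thm: guedjlu}, citing it would be cleaner than re-deriving it; as written, your sketch of step (i) stops short of the crux.
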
 
In particular, when $\theta$ is closed or merely satisfies the so-called Guan--Li condition: $\dc\theta=0$, ${\rm d}\theta \wedge {\rm d^c}\theta=0$ (equivalent to $\dc\theta^k=0$ for all $k=1,\dots,n-1$), then we obtain $\widehat v_-(V,\theta|_V)=\int_V\theta^{\dim V}$ by~\cite[Lemma 4.4]{guedj2022quasi2}, provided that $v_+(X,\omega_X)<\infty$. This directly implies the Collins--Tosatti result in the case of compact complex manifolds belonging to the Fujiki class $\mathcal{C}$.

\medskip 
A direct application of Theorem~\ref{thmA} is to establish the Nakai--Moishezon criterion for some compact non-K\"ahler manifolds, which generalizes the
fundamental result of Demailly and P\u aun ~\cite{demailly2004numerical} for compact K\"ahler manifolds, and Buchdahl~\cite{buchdahl00-NakaiMoishezon} and Lamari~\cite{lamari1999courants} for compact non-K\"ahler surfaces. Our proof relies essentially on the mass concentration result of Demailly--P\u aun, whose proof was simplified by Chiose~\cite{chiose2016kahler} and Popovici~\cite{popovici2016sufficients}. Using Theorem~\ref{thmA} allows one to avoid the inductive argument on dimension established in~\cite{demailly2004numerical}.  
% There are some major difficulties in adapting the method of Buchdahl and Lamari to higher dimensions. 
Inspired by the works of Collins--Tosatti~\cite{collins-tosatti-singular} and Das--Hacon--P\u aun~\cite{das-hacon-paun2022mmp}, we establish the Nakai--Moishezon criterion on a possibly singular compact analytic normal variety, but the proof relies on the induction on dimension. 
\begin{bigthm}\label{thm: N-M criterion}
     Let $X$ be a compact analytic normal variety of pure dimension $n$ and $\omega_X$ a Hermitian metric on $X$ such that $\dc\omega_X^k=0$ for all $k=1,\ldots,n-1$. Let $\theta$ be a smooth real (1,1) form on $X$ such that
    \begin{enumerate}
    \item $\dc\theta^k=0$, for any $k=1,\dots, n-1$;
        \item for all $k=1,\ldots, n$, $$\int_X\theta^k\wedge\omega_X^{n-k}>0;$$
        \item  for every irreducible analytic
        subvariety $V\subset X$ with $\dim V>0$, 
        $$\int_V\theta^k\wedge \omega_X^{\dim V-k}>0,\,\;\forall\,k=1,\dots,\dim V .$$
    \end{enumerate} Then the $\ddbar$-class $[\theta]$ is Hermitian.
\end{bigthm}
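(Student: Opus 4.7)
My plan is to proceed by induction on $n = \dim X$, using Theorem~\ref{thmA} as the main analytic input and restricting to subvarieties to feed the inductive hypothesis. The base case $n=1$ is immediate: on a compact Riemann surface the $\dc$-cohomology is one-dimensional and detected by the degree, so $\int_X \theta > 0$ forces the class of $\theta$ to contain a smooth positive representative.

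For the inductive step I would first propagate the hypotheses to positive-dimensional subvarieties. Let $V \subset X$ be irreducible with $0 < \dim V < n$, and consider the normalization $\nu \colon \tilde V \to V$ (again a normal analytic variety). Since restriction and pullback commute with $\dc$, the Hermitian metric $\nu^*(\omega_X|_V)$ remains Guan--Li on $\tilde V$ and $\theta|_V$ satisfies condition~(1); conditions~(2) and~(3) on $\tilde V$ follow from condition~(3) on $X$. The inductive hypothesis then gives a Hermitian representative of $[\theta|_V]_{\ddbar}$ on $\tilde V$; in particular $\theta|_V$ is big and, by~\cite[Lemma~4.4]{guedj2022quasi2}, $\widehat v_-(V,\theta|_V) = \int_V \theta^{\dim V} > 0$. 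Thus no positive-dimensional subvariety of $X$ lies in $\Null(\theta)$.

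It remains to establish that $[\theta]$ is nef, after which Theorem~\ref{thmA} applied to $\theta$ (whose $\widehat v_-(X,\theta) = \int_X \theta^n > 0$ by condition~(2) and Lemma~4.4) gives $\NK(\theta) = \Null(\theta) = \emptyset$, so $[\theta]$ is Hermitian. For nefness I would run the standard continuity method: the set $S = \{t \geq 0 : [\theta + t\omega_X]$ is Hermitian$\}$ is open and contains all sufficiently large $t$. Let $\tau = \inf S$; if $\tau > 0$, then $\theta' := \theta + \tau \omega_X$ is nef but not Hermitian, and conditions~(2) and~(3) remain strict for $\theta'$ by multilinearity. Repeating the inductive restriction argument with $\theta'$ in place of $\theta$ yields $\Null(\theta') = \emptyset$, so Theorem~\ref{thmA} forces $[\theta']$ to be Hermitian, contradicting $\tau > 0$. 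Hence $\tau = 0$ and $[\theta]$ is nef.

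The principal obstacle is the singular setting. One must have in hand the version of Theorem~\ref{thmA} valid on compact normal analytic varieties, developed in Section~\ref{sect: singularspace}, together with a pullback calculus under the normalizations of subvarieties sufficient to preserve the Guan--Li property, the volume condition~\eqref{H}, and all intersection numbers. A further subtlety in the continuity step is that condition~(1) is not automatically preserved by $\theta' = \theta + \tau\omega_X$: since $\dc$ is not a derivation, the mixed powers $\theta^a \wedge \omega_X^b$ need not be $\dc$-closed, so one must either bound $\widehat v_-(X,\theta')$ directly from the nefness of $\theta'$ rather than through Lemma~4.4, or extract the mixed compatibility as an auxiliary lemma from the individual Guan--Li hypotheses on $\theta$ and $\omega_X$.
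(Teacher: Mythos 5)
Your skeleton — a continuity parameter $\tau=\inf\{t:\,[\theta+t\omega_X]\ \text{Hermitian}\}$, a contradiction via Theorem~\ref{thmA} once no positive-dimensional subvariety lies in the null locus, hence nefness and then $\NK(\theta)=\Null(\theta)=\varnothing$ — is exactly the paper's argument in the \emph{smooth} case (its Theorem~\ref{thm: Nakai-Moishezon} packages the last step). The genuine gap is the singular ambient space. You invoke ``the version of Theorem~\ref{thmA} valid on compact normal analytic varieties, developed in Section~\ref{sect: singularspace}'', but that section only extends \emph{definitions} (forms, psh functions, currents, integration); Theorem~\ref{thmA}, Theorem~\ref{thm: guedjlu}, Theorem~\ref{thm: Nakai-Moishezon} and the machinery behind them (Demailly regularization, the extension Theorem~\ref{thm: extension-current} via blow-ups and Richberg gluing, the Monge--Amp\`ere solvability of Tosatti--Weinkove) are all proved on manifolds and do not transfer for free. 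The paper's proof is structured precisely to avoid a singular Theorem~\ref{thmA}: it resolves $X$, applies the smooth criterion upstairs to $\pi^*\theta$ with a perturbed metric, pushes the resulting Hermitian metric forward to a current singular along $X_{\rm sing}$, and then removes those singularities by induction on dimension combined with gluing by the regularized maximum. Your induction supplies no substitute for this step, so as written it only yields the smooth case.

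The inductive restriction step is also shaky and, in fact, unnecessary. On the normalization $\nu:\tilde V\to V$ the form $\nu^*(\omega_X|_V)$ is in general only semi-positive — it degenerates wherever $\nu$ composed with a local embedding of $V$ fails to be an immersion (e.g.\ over a cusp) — so it is not a Hermitian metric and the inductive hypothesis cannot be applied as stated; moreover you would still have to descend Hermitian-ness/bigness from $\tilde V$ back to $V$, since $\Null(\theta)$ is defined through currents on $V$ itself. The paper instead observes that once $\theta'=\theta+t_0\omega_X$ is nef on $X$, its restriction to every irreducible $V$ is nef, and the singular extension of Lemma~\ref{lem: GL4.4} recorded at the end of Section~\ref{sect: singularspace} converts condition (3) directly into $\widehat v_-(V,\theta'|_V)=\int_V(\theta')^{\dim V}>0$; no induction over subvarieties is needed at that point. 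The mixed $\dc$-closedness caveat you raise for $\theta'$ is real, but it affects the paper's own use of Lemma~\ref{lem: GL4.4} equally; note however that your fallback of ``bounding $\widehat v_-(X,\theta')$ directly from nefness'' cannot work, since nef classes can have $\widehat v_-=0$ — some Guan--Li-type statement for the mixed powers is genuinely needed there.
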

 We note that condition (2) can be absorbed in (3) when $V=X$. In the case of complex smooth surfaces, thanks to~\cite[Proposition 5]{Buchdahl1999compact}, we can relax the condition $(3)$ to require only that $\int_V\theta>0$ for every irreducible curve $V\subset X$ with strictly negative self-intersection $V\cdot V<0$. 
 We refer to Sect.~\ref{sect: singularspace} for the definition of differential forms, Hermitian metrics, and integrals on singular analytic varieties.
% Inspired by the work of Collins and Tosatti~\cite{collins-tosatti-singular}, we expect to prove a generalized version of Theorem~\ref{thm: N-M criterion} for possibly singular compact complex varieties embedded in a smooth ambient space. We believe this holds and leave it to forthcoming work.
% Here $NK(\theta)$ denotes the non-K\"ahler locus of $\theta$ and $Null(\theta)$ the null locus. We can show by assumption that $NK(\theta)$ and $Null(\theta)$ are proper subsets of $X$. 

\medskip
Finally, we study finite time singularities of the Chern--Ricci flow on compact complex manifolds, partially answering a question raised by Tosatti--Weinkove. 
% Let $(X,\omega_0)$ be a compact complex manifold of dimension $n$.
Let $\omega=\omega(t)$ be a solution of the Chern--Ricci flow on $X$, for $t\in[0,T[$, starting with a Hermitian metric $\omega_0$,
\begin{equation}
    \label{crf0}
    \frac{\partial\omega}{\partial t}=-\ric(\omega), \quad\omega(0)=\omega_0,
\end{equation}
where $\ric(\omega)$ is the Chern--Ricci form (also called the first Chern form) of the Hermitian metric $\omega$.
It is classical that solving the Chern--Ricci flow boils down to solving a nonlinear parabolic scalar equation in $\f=\f(t)$ of the form
\begin{equation*}
    \frac{\partial\f}{\partial t}=\log\frac{(\theta_t+\dc\f)^n}{\omega_0^n},\quad\theta_t+\dc \f>0,\;\f(0)=0,
\end{equation*}
where $\theta_t:=\omega_0- t\ric(\omega_0)$.
Tosatti and Weinkove~\cite{tosatti2015evolution} characterized the maximal existence time $T$ of the flow as
\begin{equation*}
    T=\sup\{t>0:\exists\,\psi\in\mathcal{C}^\infty(X)\;\text{with}\;\theta_t+\dc\psi>0\},
\end{equation*}
by this, we mean that $T$ is such that for all $t<T$, the $\ddbar$-class $[\theta_t]$ is Hermitian.
Suppose that the flow develops a finite time singularity $T<\infty$. We observe that the limiting  $\ddbar$
class $[\theta_T]$ is nef but not Hermitian.%(the flow cannot exist beyond time $T$)
 We define the set of singularities of the flow, denoted by $\Z(\omega(t))$, to be the complement of the set of all points $x\in X$ such that there is a neighborhood $U\ni x$ in which $\omega(t)$ converges smoothly to a Hermitian metric $\omega_T$ as $t\to T^-$. 
Furthermore, we can show that the singularity set of the Chern--Ricci flow is equal to the set on which the Chern scalar curvature blows up. The latter was studied by Gill--Smith~\cite{gill-smith14-chernricci}, generalizing Zhang's analog result for K\"ahler--Ricci flows.

 Tosatti and Weinkove~\cite[Question 6.1]{tosatti2022chern} posed a question of whether singularities of the Chern--Ricci flow develop precisely along analytic subvarieties of $X$?  
 In the K\"ahler case, this question was initially raised by Feldman, Ilmanen, and Knopf and later answered affirmatively by Collins and Tosatti~\cite[Theorem 1.5]{collins2015kahler}. Gill and Smith~\cite[Theorem 1.2]{gill-smith14-chernricci} gave an affirmative answer
 for the case when $X$ is a complex surface and $\omega_0$ is Gauduchon. As observed in~\cite[Secion 6]{tosatti2022chern}, the answer is affirmative when the volume with respect to $\omega(t)$ shrinks to zero as $t\to T$. We investigate non-collapsing the case when  $\widehat v_-(X,\theta_T)>0$  and provide an answer to this question on some underlying manifolds.
% \begin{bigthm}\label{thm-parabolic}
%     Let $X$ be a compact complex manifold equipped with a Hermitian metric $\omega_X$. 
%      Assume that $\omega_X$ satisfies the assumption~\eqref{H}. Assume that the solution $\omega(t)$ the Chern--Ricci flow~\eqref{crf0} starting at a Hermitian metric $\omega_0$ develops
% a singularity at finite time $T$, and that $\widehat v_-(X,\theta_T)>0$. Then as $t\to T^-$, the singularities of the flow can be characterized as 
% \[\Z(\omega(t))=\bigcup_{\widehat v_-(V,\theta_T|_V)=0} V\]
%      where the union is taken over all purely positive-dimensional irreducible analytic subvarieties $V$ of $ X$.  
% \end{bigthm}
    \begin{bigthm}\label{thm-parabolic}
 Let $(X,\omega_0)$ be a compact Hermitian manifold of dimension $n$. Let $\omega(t)$ be a solution of the Chern--Ricci flow starting at $\omega_0$, which develops
a singularity at finite time $T$. Assume that the limiting form $\theta_T=\omega_0-T\ric(\omega_0)$ satisfies $\widehat v_-(X,\theta_T)>0$.
Then if either 
\begin{itemize}
    \item $n=2$ and $\omega_0$ is arbitrary,
   \item  or $n\geq 3$, $\dc\omega_0=0$, ${\rm d}\omega_0\wedge {\rm d^c}\omega_0=0$, 
\end{itemize}
then the metrics $\omega(t)$ develop singularities precisely along the analytic set $\Null(\theta_T)$.
\end{bigthm}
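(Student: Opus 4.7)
The plan is to prove the two inclusions $\Z(\omega(t)) \subseteq \Null(\theta_T)$ and $\Null(\theta_T) \subseteq \Z(\omega(t))$ separately, combining the characterization $\NK(\theta_T) = \Null(\theta_T)$ supplied by Theorem~\ref{thmA} with an adaptation of the Tsuji--Tian--Zhang parabolic scheme, as carried out by Collins--Tosatti \cite{collins2015kahler} in the K\"ahler--Ricci setting. I would first check that $\omega_0$ satisfies the condition~\eqref{H}: in the Guan--Li case this follows from the volume invariance \cite{chiose2016invariance}, while for $n=2$ one can reduce to a Gauduchon representative of $\omega_0$, after which the condition holds trivially on $X$ and on curves. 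Combined with the hypothesis $\widehat v_-(X,\theta_T)>0$, Theorem~\ref{thmA} then guarantees that $[\theta_T]$ is big and yields a $\theta_T$-psh function $\psi$ with analytic singularities exactly along $\Null(\theta_T)$ such that $\theta_T+\dc\psi \geq \varepsilon_0\om_0$ on $X\setminus\Null(\theta_T)$.

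To establish $\Z(\omega(t))\subseteq\Null(\theta_T)$, the objective is smooth convergence of $\omega(t)$ to a Hermitian limit on $X\setminus\Null(\theta_T)$. Working with the scalar potential equation
\[\frac{\partial\f}{\partial t}=\log\frac{(\theta_t+\dc\f)^n}{\om_0^n},\qquad \f(0)=0,\]
I would derive the following estimates on each compact $K\Subset X\setminus\Null(\theta_T)$: a uniform $L^\infty$ bound on $\f(t)$ via the parabolic maximum principle applied to $\f - (1-\delta)\psi$ with a time-dependent auxiliary term; a bound $\omega(t)\leq C_K\om_0$ from a parabolic Chern--Calabi computation carried out on $e^{-A\psi}\tr_{\om_0}\omega(t)$; and $\mathcal{C}^\infty_{\loc}$ estimates via parabolic Evans--Krylov and Schauder theory with cutoffs supported off $\Null(\theta_T)$. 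Standard arguments then produce a smooth Hermitian limit $\omega_T$ on $X\setminus\Null(\theta_T)$, giving the desired inclusion.

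For the reverse inclusion, assume for contradiction that some $x_0\in\Null(\theta_T)$ lies outside $\Z(\omega(t))$, so $\omega(t)$ converges smoothly to a local Hermitian form on a neighborhood $U$ of $x_0$. Gluing this local limit to a fixed background Hermitian current in $[\theta_T]$ with analytic singularities, via a standard cutoff, produces a global Hermitian current in $[\theta_T]$ which is smooth at $x_0$. This contradicts $x_0\in\NK(\theta_T)=\Null(\theta_T)$, so $\Null(\theta_T)\subseteq\Z(\omega(t))$.

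The main obstacle will be the second-order estimate in the Chern--Ricci setting: the torsion of $\omega_0$ generates additional terms in the maximum-principle computation for $\tr_{\om_0}\omega(t)$ that are absent in the K\"ahler case and are not automatically controlled. The Guan--Li conditions $\dc\om_0=0$ and ${\rm d}\om_0\wedge{\rm d^c}\om_0=0$ are precisely what allows these torsion terms to be absorbed, while in the surface case the pluriclosed structure on a Gauduchon representative plays the analogous role; this is where the case split in the statement enters. Combining this estimate with the singular-set localization provided by $e^{-A\psi}$ requires care so that the weight does not interact badly with the Hermitian torsion terms, but once this is accomplished, the remaining estimates proceed along the now-standard Hermitian parabolic regularity machinery.
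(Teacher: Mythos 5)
Your first inclusion $\Z(\omega(t))\subseteq\Null(\theta_T)$ follows the paper's route (Theorem~\ref{thm: singularity-flow}: $L^\infty$ bounds, a localized second-order estimate, Evans--Krylov/Schauder), and is fine in outline. The genuine gap is in your reverse inclusion. Your contradiction argument --- glue the smooth local limit $\omega_U=\theta_T+\dc\f_T$ on $U\ni x_0$ to a global Hermitian current $S=\theta_T+\dc\psi$ with analytic singularities ``via a standard cutoff'' to get a global Hermitian current smooth at $x_0$, contradicting $x_0\in\NK(\theta_T)$ --- cannot work as stated. First, the gluing itself fails: $\psi\equiv-\infty$ along $\Null(\theta_T)$, and the component of $\Null(\theta_T)$ through $x_0$ (positive-dimensional, by Lemma~\ref{lem: isolated}) in general crosses $\partial U$, so one cannot arrange $\f_T-C<\psi$ (or any cutoff comparison) near $\partial U$; the max/cutoff construction breaks precisely where it is needed. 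Second, and more fundamentally, the conclusion you are after is vacuous as a local statement: since $\ddbar$-classes are locally trivial, \emph{every} point of $X$ admits a neighborhood and a smooth local potential $u$ with $\theta_T+\dc u>0$ (take $u=A|z|^2$ in a chart), so the mere existence of a smooth local Hermitian representative near $x_0$ carries no contradiction with $x_0\in\NK(\theta_T)$. Turning local positivity into a global Hermitian current is exactly the hard content of the extension Theorem~\ref{thm: extension-current} (resolution of singularities, Richberg gluing along a subvariety, reduction of Lelong numbers, and nefness), and even that theorem extends currents defined on subvarieties, not germs of potentials at a point. The paper avoids this entirely: by Theorem~\ref{thmA}, $\Null(\theta_T)=\bigcup\{V:\ v_-(V,\omega(t)|_V)\to 0\}$, and if $\omega(t)\to\omega_U$ smoothly near $x\in V$ then $v_-(V,\omega(t)|_V)=\int_V\omega(t)^{\dim V}\ge\int_{V\cap U}\omega(t)^{\dim V}\to\int_{V\cap U}\omega_U^{\dim V}>0$, contradicting the collapse. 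This is where the case split really enters: for $n=2$ one has $\dim V=1$ and $v_-(V,\omega(t)|_V)=\int_V\omega(t)$ automatically, while for $n\ge3$ the Guan--Li condition (preserved by the flow) is what identifies $v_-(V,\omega(t)|_V)$ with the honest integral via Lemma~\ref{lem: GL4.4}.

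Relatedly, you misplace the role of the hypotheses: the second-order estimate for the Chern--Ricci flow does \emph{not} need $\dc\omega_0=0$ or ${\rm d}\omega_0\wedge{\rm d^c}\omega_0=0$; the torsion terms are absorbed for arbitrary Hermitian $\omega_0$ by the Phong--Sturm trick as in Tosatti--Weinkove (this is Lemma~\ref{lem: C2estimate}, valid in all dimensions). The dichotomy in the statement is used instead (a) to verify the volume condition~\eqref{H} so that Theorem~\ref{thmA} applies to $\theta_T$, and (b) in the volume-collapse argument just described. Your verification of~\eqref{H} in the two cases is essentially right; it is the reverse inclusion that needs to be redone along the paper's volume-theoretic lines.
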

This theorem generalizes Gill--Smith's result when the underlying manifold $X$ is a surface, but the initial metric $\omega_0$ is given to be arbitrary and also partially answers the question of Tosatti and Weinkove.  
 With assumptions in this theorem, we have that the limiting form $\theta_T$ is big, so its null locus $\Null(\theta_T)$ is a proper analytic subset of $X$ (since $[\theta_T]$ is not Hermitian), and our theorem shows that no singularities develop on its complement. We have seen from Theorem~\ref{thmA} that $\Null(\theta_T)$ is precisely the union of all purely positive-dimensional irreducible subvarieties $V$ of $X$ with $v_-(V,\omega(t))$ shrinking to zero as $t\to T^-$, so the singularities of the flow develop at least there. We will see in Theorem~\ref{thm: sing-formation} that the scalar curvature of $\omega(t)$ is locally uniformly bounded on compact sets outside singularities. Furthermore, it should be emphasized that in the setting of our theorem, as $t\to T^-$, the metrics $\omega(t)$ weakly converge to a positive current $\omega_T$ that is smooth on $X\backslash\Null(\theta_T)$ and has minimal singularities.

 A crucial case of such finite time singularities occurs when the $\ddbar$-class $[\theta_T]$ contains a pullback of a Hermitian metric via the blowdown of an exceptional divisor to a point. Interesting results were obtained by Tosatti and Weinkove~\cite{tosatti2013chern}, T\^o~\cite{to2018regularizing}, and Nie~\cite{nie2017weak}, in which they proved that the Chern--Ricci flow performs a {\em canonical surgical contraction}.

\medskip
This paper is structured as follows. In Sect.~\ref{sect: preliminary},  we introduce notation and definitions and briefly review fundamental results extended to the non-K\"ahler context. In Sect.~\ref{sect: null}, we establish an extension-type theorem for Hermitian currents with analytic singularities, which enables us to prove Theorem~\ref{thmA}, and we also prove Theorem~\ref{thm: N-M criterion} there. 
% Sect.~\ref{sect: elliptic} is dedicated to the study of degenerate complex Monge--Amp\`ere equations, where we prove Theorems~\ref{thm-elliptic} and~\ref{thm-envelope}.
Finally, we investigate finite time singularities of the Chern--Ricci flow and prove Theorem~\ref{thm-parabolic} in Sect.~\ref{sect: crf}.

\subsection*{Notation} Unless otherwise mentioned, $X$ denotes a compact Hermitian manifold with a reference Hermitian metric $\omega_X$. 
We interchangeably use the Hermitian metric $\omega_X$ and its associated Riemannian metric $g$.
We denote by ${\rm d}=\partial+\Bar{\partial}$ and ${\rm d^c}=\frac{i}{2}(\Bar{\partial}-\partial)$ so that
$\dc=i\partial\Bar{\partial}.$
Throughout the paper, we use $C$ to represent a positive constant, which may vary from line to line and can be uniformly controlled.
\subsection*{Acknowledgement} 
 We would like to express our gratitude to V. Tosatti for suggesting this problem, his interest in this work, and his valuable comments. 
We are grateful to T.-D. T\^o and C. H. Lu for insightful discussions. We also thank the anonymous referee for the valuable comments. This work is partially supported by the ANR project PARAPLUI.

\subsection*{Ethics declarations} The author declares no conflict of interest.

\section{Preliminaries}\label{sect: preliminary}
Let $X$ be a compact complex manifold of dimension $n$. Fix $\omega_X$ a reference Hermitian metric on $X$. 
% We let $\mathcal{A}^{p,q}(X,\mathbb{C})$ denote the sheaf of germs of smooth differential $(p,q)$-forms on $X$. 
\subsection{$\partial\Bar{\partial}$-classes}
 It is well-known that the $\partial\Bar{\partial}$-lemma does not hold on an arbitrarily complex manifold, so it is better to work with the $\partial\Bar{\partial}$-class of differential forms.  We denote by $\mathcal{A}^{p,q}(X,\mathbb{C})$ (resp. $\mathcal{A}^{p,q}(X,\mathbb{R})$) the space of smooth (resp. real) forms of bidgree $(p,q)$. We say that two forms $\theta,\eta\in\mathcal{A}^{p,q}(X,\mathbb{C})$ are equivalent if there exists a smooth form $\psi\in\mathcal{A}^{p-1,q-1}(X,\mathbb{C})$ such that $$\theta-\eta=\dc\psi.$$  Since then, we let $[\theta]_{\ddbar}$ denote the $\ddbar$-class with smooth representative $\theta$. For simplicity, we skip the subscript $\ddbar$, and unless we emphasize the $\ddbar$ class $[\theta]$ is interpreted as the set of forms $\theta+\dc\psi$ for all $\psi\in\mathcal{C}^\infty(X)$.

% We remark that a ${\rm d}$-exact form is not $\ddbar$-exact in the general context of non-K\"ahler manifolds. A compact complex manifold $X$ is said to be a {\em $\ddbar$-manifold} if it satisfies the exactness property: for any $\rm d$-closed
% pure-type form $u$ on $X$, 
% $u$ is $d$-exact $\Longleftrightarrow$ $u$ is $\ddbar$-exact for any $d$-closed form $u$ on $X$.
% In particular, a compact K\"ahler manifold is a $\ddbar$-manifold.
% We are only interested in the forms that satisfy the Guan-Li condition: $\dc\theta=\dc\theta^2=0$. In particular, for any real smooth (1,1) form $\theta$ satisfying the Guan--Li condition, we have $\dc\theta^k=0$ for any $1\leq k\leq n$.
% We let $GL^{1,1}(X)$ denote the set of all equivalence classes of Guan--Li form on $X$.
% \[GL^{1,1}(X)=\frac{\{\theta\in\mathcal{C}^{\infty}_{1,1}(X,\mathbb{R}): \dc\theta=\dc\theta^2=0\}}{\dc\mathcal{C}^\infty(X,\mathbb{R})}. \]
% For any real smooth (1,1) form $\theta$ satisfying the Guan--Li condition, we have
% \[ \int_X\theta^n=\int_X(\theta+\dc\psi)^n,\; \forall\, \psi\in\mathcal{C}^\infty(X,\mathbb{C}).\]
% This allows us to give a good definition of the {intersection number} of Guan--Li equivalence classes, denoted by $[\theta]^n=\int_X\theta^n$. 
We refer the interested reader to~\cite[Sect. 1]{BoucksomGuedjLu2025-volume} for more details.
\begin{definition}\label{def: nef}
    A smooth real (1,1) form $\theta$ is called {\em nef} if for any $\varepsilon>0$ 
    there exists a form $\theta_\varepsilon\in[\theta]$ such that $\theta_\varepsilon\geq -\varepsilon\omega_X$, i.e., 
$\exists\,\psi_\varepsilon\in\mathcal{C}^\infty(X,\mathbb{R})$ such that $\theta+\dc\psi_\varepsilon\geq -\varepsilon\omega_X$. In this case, the $\ddbar$-class $[\theta]$ is said to be nef.

If there is a Hermitian metric $\omega\in [\theta]$, then we say that the $\ddbar$-class $[\theta]$ is {\em Hermitian}.
    \end{definition}

 It should be emphasized that, unlike in the K\"ahler case, even if two Hermitian metrics $\omega$ and $\omega'$ lie in the same $\ddbar$-class, i.e., there exists a function $\f \in \mathcal{C}^\infty(X)$ such that $\omega' = \omega + \dc \f$, the volumes with respect to $\omega$ and $\omega'$ are generally not equal. Chiose~\cite[Theorem 0.1]{chiose2016invariance} showed that the preservation of Monge--Amp\`ere masses is equivalent to the condition introduced by Guan–Li~\cite{guan2010complex}: namely, that $\int_X\omega^n=\int_X\omega_X^n$ for all $\omega\in [\omega_X]$ Hermitian if and only if $\dc\omega_X=0$ and ${\rm d}\omega_X\wedge {\rm d^c}\omega_X=0$. The latter condition is equivalent to $\dc\omega_X^k=0$ for all $k=1,\dots, n-1$; see also~\cite[Theorem 0.2]{chiose2016invariance} for additional equivalent conditions.

We recall some quantities on Monge--Amp\`ere volume bounds introduced by Guedj and Lu~\cite{guedj2022quasi2}.

\begin{definition} We define
    \[v_+(X,\omega_X)=\sup\left\{\int_X\omega^n: \forall\,\omega\in[\omega_X],\,\omega\,\text{Hermitian} \right\}, \]
 and 
\[v_-(X,\omega_X)=\inf\left\{\int_X\omega^n: \forall\,\omega\in[\omega_X],\,\omega\,\text{Hermitian} \right\}. \]
\end{definition}
A question raised by Guedj and Lu~\cite{guedj2022quasi2} is whether $v_+(X,\omega_X)<+\infty$ and/or $v_-(X,\omega_X)>0$. We remark that these conditions somewhat depend on the complex structure, but they are independent of the choice of a Hermitian metric; see \cite[Proposition 3.2]{guedj2022quasi2}. Furthermore, they are bimeromorphic invariants, and so is the condition~\eqref{H}.
\begin{theorem}[{\cite[Theorem 3.7]{guedj2022quasi2}}]
\label{thm: invariant-H} Let $\pi\colon X \to Y$ be a bimeromorphic map between compact complex manifolds equipped with Hermitian metrics $\omega_X$ and $\omega_Y$. Then $\omega_X$ satisfies the condition~\eqref{H} if and only if $\omega_Y$ satisfies the condition~\eqref{H}.
\end{theorem}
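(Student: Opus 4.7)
The plan is to reduce to the case of proper modifications and then exploit the interplay between pullback and pushforward. Any bimeromorphic map $\pi:X\dashrightarrow Y$ factors, by Hironaka's resolution of indeterminacies, through a smooth compact complex manifold $Z$ with proper holomorphic modifications $p:Z\to X$ and $q:Z\to Y$ such that $q=\pi\circ p$ as meromorphic maps. Hence, by a two-step argument, it suffices to prove: if $\pi:X\to Y$ is a proper holomorphic modification with $X$ smooth, then \eqref{H} holds on $X$ if and only if it holds on $Y$. Moreover, by~\cite[Prop.~3.2]{guedj2022quasi2}, the quantities $v_\pm$ do not depend on the chosen Hermitian representative, so I am free to fix compatible metrics: given any Hermitian $\omega_Y$ on $Y$ and any Hermitian $\eta$ on $X$, I take $\omega_X:=\pi^*\omega_Y+\eta$, which is automatically Hermitian.

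For the $v_+$ invariance, the easy direction is by pullback. If $v_+(X,\omega_X)<+\infty$ and $\omega=\omega_Y+\dc\varphi$ is Hermitian in $[\omega_Y]$, then $\pi^*\omega+\eta=\omega_X+\dc(\varphi\circ\pi)$ is Hermitian in $[\omega_X]$, and the birational invariance of top-degree integrals gives
\[
\int_Y\omega^n=\int_X(\pi^*\omega)^n\leq\int_X(\pi^*\omega+\eta)^n\leq v_+(X,\omega_X),
\]
so $v_+(Y,\omega_Y)\leq v_+(X,\omega_X)<+\infty$. The converse is more delicate: given Hermitian $\omega'=\omega_X+\dc\psi$ on $X$, I expand
\[
(\omega')^n=\bigl(\pi^*\omega_Y+\eta+\dc\psi\bigr)^n=\sum_{k=0}^n\binom{n}{k}(\pi^*\omega_Y)^{n-k}\wedge(\eta+\dc\psi)^k
\]
and bound each term by combining the projection formula $\int_X(\pi^*\omega_Y)^{n-k}\wedge\chi=\int_Y\omega_Y^{n-k}\wedge\pi_*\chi$ with a Chiose--Popovici-type integration-by-parts estimate (cf.~\cite{chiose2016invariance,popovici2016sufficients}) that converts the mass of the non-closed correction $(\eta+\dc\psi)^k$ into quantities uniformly controlled by $v_+(Y,\omega_Y)$.

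For the $v_-$ part of \eqref{H} I argue by induction on dimension. A positive-dimensional closed submanifold $V\subset X$ either lies in the $\pi$-exceptional locus, in which case $\dim\pi(V)<\dim V$ and the inductive hypothesis applied to a smooth resolution of $\pi(V)$ (together with the bimeromorphic invariance in lower dimension) provides the required lower bound, or $\pi|_V$ is generically injective onto an irreducible subvariety $W\subset Y$ of the same dimension; resolving the singularities of $W$ yields a smooth $\widetilde W$ with a modification $\widetilde W\to W$, and then $V\dashrightarrow\widetilde W$ is a bimeromorphic map of strictly lower dimension to which the inductive hypothesis applies. The symmetric statement starting from a submanifold $W\subset Y$ is handled analogously via its strict transform in $X$.

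The main obstacle is the converse direction of the $v_+$ step: pushing forward a Hermitian metric from $X$ yields only a closed positive $(1,1)$-current on $Y$, not a smooth form, so the Monge--Amp\`ere mass bound cannot be obtained by a purely algebraic computation. It requires pluripotential-theoretic estimates of Chiose--Popovici type adapted to the non-closed setting, in which boundary terms arising from $\dc\omega_X\neq 0$ must be absorbed by the hypothesis $v_+(Y,\omega_Y)<+\infty$. Once this comparison is established, the $v_-$ induction is essentially formal, up to bookkeeping with singular strict transforms.
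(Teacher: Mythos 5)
You should first note that the paper does not prove this statement at all: it is quoted verbatim from Guedj--Lu \cite[Thm.~3.7]{guedj2022quasi2}, so the only comparison available is with the merits of your own argument. As it stands, your proposal has a genuine gap precisely at the step you yourself flag as ``the main obstacle'': the implication $v_+(Y,\omega_Y)<+\infty\Rightarrow v_+(X,\omega_X)<+\infty$ is not proved, only reduced to an unspecified ``Chiose--Popovici-type integration-by-parts estimate''. This is the analytic heart of the theorem, and the reduction you sketch does not obviously go through: in the expansion of $(\pi^*\omega_Y+\eta+\dc\psi)^n$ the terms $(\pi^*\omega_Y)^{n-k}\wedge(\eta+\dc\psi)^k$ have no sign, the pushforward $\pi_*\bigl((\eta+\dc\psi)^k\bigr)$ is merely a current, and the hypothesis $v_+(Y,\omega_Y)<+\infty$ controls only full Monge--Amp\`ere masses $\int_Y(\omega_Y+\dc\varphi)^n$ of Hermitian forms in $[\omega_Y]$, not mixed masses of the form $\int_Y\omega_Y^{n-k}\wedge\pi_*\chi$; moreover, integrating by parts against the non-closed $\omega_X$ produces torsion terms $\mathrm{d}\omega_X\wedge{\rm d^c}\psi$ with no sign, which is exactly why this invariance is a nontrivial theorem rather than a computation. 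A proof must actually carry out this comparison (as Guedj--Lu do), not cite it as a black box.

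The $v_-$ part also breaks down in the exceptional case. If $V\subset X$ is a positive-dimensional submanifold contained in the $\pi$-exceptional locus, then $\dim\pi(V)<\dim V$, so $V$ and $\pi(V)$ are \emph{not} bimeromorphic and no inductive hypothesis about lower-dimensional bimeromorphic invariance, nor condition~\eqref{H} on $Y$, says anything about $v_-(V,\omega_X|_V)$: for a point blow-up, $V$ can be the exceptional divisor $E\simeq\mathbb{P}^{n-1}$ mapping to a point, and the required bound $v_-(E,\omega_X|_E)>0$ must be established directly (it holds here because $E$ is projective, but your stated mechanism produces nothing). In the non-exceptional case there is a further circularity you dismiss as bookkeeping: the image $W=\pi(V)$ may be singular, and condition~\eqref{H} on $Y$ quantifies only over closed \emph{submanifolds} of $Y$, so invoking a resolution $\widetilde W$ requires knowing positivity of $v_-$ on a manifold that is neither a submanifold of $Y$ nor covered by your induction as formulated; the inductive statement has to be strengthened and proved, not assumed. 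The two ``easy'' ingredients you do carry out (the choice $\omega_X=\pi^*\omega_Y+\eta$ via \cite[Prop.~3.2]{guedj2022quasi2}, and the pullback inequality $v_+(Y,\omega_Y)\le v_+(X,\omega_X)$) are fine, but they are the routine half of the argument.
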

 % We refer to~\cite{angella2022plurisigned} for some classes of manifolds satisfying the condition above. 
% If $\theta$ is a Hermitian metric, the supremum and infimum in the definition above can be taken over all smooth $\theta$-psh functions, thanks to Demailly's regularization theorem and Bedford--Taylor's convergence theorem.
% \begin{hypo} \label{H}
%     We say that $\omega_X$ satisfies the volume condition $(H)$ if: \begin{equation*} \tag{H}\label{H}
%     v_+(X,\omega_X)<+\infty \quad\text{and}\; v_-
%      (V,\omega_X|_V)>0
% \end{equation*} for any positive-dimensional compact submanifold $V\subset X$.
% \end{hypo}
\begin{definition}
    If $\theta$ is nef, we can define
\[\widehat v_-(X,\theta)\coloneqq\inf_{\varepsilon>0}v_-(X,(1-\varepsilon)\theta+\varepsilon\omega_X).\]
\end{definition} 
Although the form $(1-\varepsilon)\theta+\varepsilon\omega_X$ is not positive, one could find by definition, for any $\varepsilon>0$, a smooth function $\f_\varepsilon$ such that $\theta+\dc\f_\varepsilon\geq -\varepsilon\omega_X$ so that $$(1-\varepsilon)\theta+\varepsilon\omega_X+\dc(1-\varepsilon)\f_\varepsilon\geq\varepsilon^2\omega_X.$$ Hence the quantity $v_-(X,(1-\varepsilon)\theta+\varepsilon\omega_X)$ is interpreted as $v_-(X,(1-\varepsilon)\theta+\varepsilon\omega_X+\dc(1-\varepsilon)\f_\varepsilon)$. We expect that the above definition coincides with the one when $\theta $ is closed. The necessary condition is that $\theta$ satisfies the Guan--Li property.
\begin{lemma}[{\cite[Lemma 4.4]{guedj2022quasi2}}]\label{lem: GL4.4}
    If $v_+(X,\omega_X)<\infty$ and $\theta$ is a smooth nef (1,1) form satisfying $\dc\theta^k=0$ for all $k=1,\ldots,n-1$ then $\widehat v_-(X,\theta)=\int_X\theta^n$. Moreover, for any closed positive-dimensional submanifold $V\subset X$, $\widehat v_-(V,\theta|_V)=\int_V\theta^{\dim V}$.
\end{lemma}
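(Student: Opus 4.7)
The plan is to prove $\widehat v_-(X,\theta) = \int_X \theta^n$ by showing that for every Hermitian representative $\omega$ of the nearby class $[(1-\varepsilon)\theta + \varepsilon\omega_X]$ one has $\int_X \omega^n = \int_X \theta^n + O(\varepsilon)$ as $\varepsilon \to 0^+$, uniformly in the choice of $\omega$. The backbone is a Guan--Li type mass invariance along $[\theta]$, combined with the hypothesis $v_+(X,\omega_X) < \infty$ to control the mixed terms, and the submanifold version will follow by restriction.

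\textbf{Mass invariance in $[\theta]$.} First I would establish that $\int_X (\theta+\dc\psi)^n = \int_X \theta^n$ for every $\psi \in \mathcal{C}^\infty(X)$. Differentiating along a linear path of potentials and applying Stokes, the derivative at $\psi$ in direction $\eta$ equals $n\int_X \eta\,\dc\bigl((\theta+\dc\psi)^{n-1}\bigr)$. Expanding binomially, each summand $\dc(\theta^j \wedge (\dc\psi)^{n-1-j})$ vanishes because $\dc\theta^j = 0$ by hypothesis and because $(\dc\psi)^{n-1-j}$ is both $d$-closed and $d^c$-closed, so the cross terms in the product rule for $dd^c$ drop out. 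Constancy in $\psi$ gives the invariance.

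\textbf{Reduction to a mixed-volume estimate.} For each $\varepsilon > 0$, any Hermitian $\omega \in [(1-\varepsilon)\theta + \varepsilon\omega_X]$ can be written $\omega = (1-\varepsilon)(\theta+\dc\psi) + \varepsilon\omega_X$ for a suitable smooth $\psi$ (by rescaling the potential). The binomial expansion
\[\int_X \omega^n = (1-\varepsilon)^n\int_X(\theta+\dc\psi)^n + \sum_{j=1}^{n}\binom{n}{j}(1-\varepsilon)^{n-j}\varepsilon^j \int_X (\theta+\dc\psi)^{n-j}\wedge\omega_X^j,\]
together with the mass invariance above, reduces the proof to bounding the mixed volumes $\int_X (\theta+\dc\psi)^{n-j}\wedge\omega_X^j$ uniformly in $\psi$. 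Once that is done, $\int_X \omega^n = \int_X \theta^n + O(\varepsilon)$ uniformly in $\omega$, yielding simultaneously the upper and lower bounds for $\widehat v_-(X,\theta) = \int_X \theta^n$. For the submanifold statement, \cite[Theorem A]{angella2022plurisigned} ensures $v_+(V,\omega_X|_V) < \infty$, while $\theta|_V$ inherits nefness and the Guan--Li condition $\dc(\theta|_V)^k = 0$ from $\theta$; applying the smooth case on $V$ gives $\widehat v_-(V,\theta|_V) = \int_V \theta^{\dim V}$.

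\textbf{Main obstacle.} The uniform bound on the mixed volumes $\int_X (\theta+\dc\psi)^{n-j}\wedge\omega_X^j$ is the delicate point: the potentials $\psi$ need not be uniformly bounded in $\mathcal{C}^0$, so pointwise estimates are unavailable. Setting $\tilde\tau := \theta + \dc\psi + \tfrac{\varepsilon}{1-\varepsilon}\omega_X > 0$ (a Hermitian form in $[\theta + \tfrac{\varepsilon}{1-\varepsilon}\omega_X]$), the expansion $(\theta+\dc\psi)^{n-j} = \bigl(\tilde\tau - \tfrac{\varepsilon}{1-\varepsilon}\omega_X\bigr)^{n-j}$ rewrites each mixed volume as an alternating sum of non-negative mixed Monge--Amp\`ere volumes $\int_X \tilde\tau^\ell \wedge \omega_X^{n-\ell}$; the hypothesis $v_+(X,\omega_X) < \infty$, combined with the machinery of~\cite{guedj2022quasi2}, must then be invoked to bound these quantities independently of $\psi$.
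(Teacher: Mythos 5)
Your overall strategy is the natural one, and it is consistent with the paper, which actually gives no argument here: it simply defers to \cite[Lemma 4.4]{guedj2022quasi2} with the remark that the Guan--Li condition guarantees preservation of Monge--Amp\`ere masses. Your first step (mass invariance $\int_X(\theta+\dc\psi)^n=\int_X\theta^n$ via Stokes, using that $(\dc\psi)^j$ is $\partial$- and $\bar\partial$-closed so that $\dc(\theta^j\wedge(\dc\psi)^{n-1-j})=(\dc\theta^j)\wedge(\dc\psi)^{n-1-j}=0$) is correct, and the reduction to uniform bounds on mixed volumes is the right skeleton; the submanifold statement does follow by restriction exactly as you say, since nefness and $\dc\theta^k=0$ restrict to $V$ and \cite[Theorem A]{angella2022plurisigned} gives $v_+(V,\omega_X|_V)<\infty$. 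The ``main obstacle'' you flag is real but fillable by a short argument you should spell out: choose $M$ with $\theta+\omega_X\leq M\omega_X$; then $\tilde\tau+\omega_X\leq (M+1)\omega_X+\dc\psi$, both sides are positive, and pointwise monotonicity of top-degree powers of positive $(1,1)$-forms gives $\binom{n}{\ell}\int_X\tilde\tau^{\ell}\wedge\omega_X^{n-\ell}\leq\int_X(\tilde\tau+\omega_X)^n\leq (M+1)^n\int_X\bigl(\omega_X+\dc\tfrac{\psi}{M+1}\bigr)^n\leq (M+1)^n\,v_+(X,\omega_X)$, uniformly in $\psi$ and in $\varepsilon\in(0,1)$. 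This is exactly the kind of bound the paper itself uses in the proof of Theorem 2.12, so invoking it is legitimate, but as written your proposal leaves the key estimate as an announcement rather than a proof.

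The genuine gap is the last logical step, which you pass over with ``yielding simultaneously the upper and lower bounds.'' Your estimates give $v_-(X,(1-\varepsilon)\theta+\varepsilon\omega_X)=\int_X\theta^n+O(\varepsilon)$, which identifies the \emph{limit} as $\varepsilon\to 0^+$, whereas $\widehat v_-(X,\theta)$ is defined as an \emph{infimum over all} $\varepsilon>0$. The upper bound $\widehat v_-\leq\int_X\theta^n$ does follow, but the lower bound does not: for a fixed $\varepsilon$ your argument only yields $v_-\geq(1-\varepsilon)^n\int_X\theta^n-C\varepsilon$, because the mixed terms $\int_X(\theta+\dc\psi)^{n-j}\wedge\omega_X^{j}$ are not sign-definite (they are genuinely $\psi$-dependent, since $\omega_X$ is not assumed $\dc$-closed), and infimizing this over all $\varepsilon$ gives nothing close to $\int_X\theta^n$. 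Indeed, with the convex-combination normalization taken literally the identity can even fail for $\varepsilon$ away from $0$ (already for $X$ K\"ahler and $\theta=2\omega_X$ one has $v_-(X,(1-\varepsilon)\theta+\varepsilon\omega_X)=(2-\varepsilon)^n\int_X\omega_X^n<\int_X\theta^n$ for every $\varepsilon>0$), so to conclude you must either work with the additive perturbation $\theta+\varepsilon\omega_X$ as in Guedj--Lu and explain why the infimum is computed by the $\varepsilon\to0$ asymptotics (e.g.\ via a monotonicity or per-$\varepsilon$ lower bound), or explicitly interpret $\widehat v_-$ through the small-$\varepsilon$ limit. Without addressing how the infimum is identified from your two-sided $O(\varepsilon)$ estimate, the proof of $\widehat v_-(X,\theta)=\int_X\theta^n$ is incomplete.
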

The proof is identical to that of~\cite[Lemma 4.4]{guedj2022quasi2}, which we omit here. Notice that the Guan--Li property is sufficient to guarantee the preservation of Monge--Amp\`ere masses.

\subsection{Quasi-plurisubharmonic functions and Hermitian currents}
 Recall that an upper semi-continuous function $\f:X \rightarrow \mathbb{R}\cup\{-\infty\}$ is {\em quasi-plurisubharmonic} ({\em quasi-psh} for short) if it
is locally described as the sum of a plurisubharmonic (psh) function and a smooth function. 
% In other words, for any $x\in X$, there are a neighborhood $U$ of $x$, a psh function $\rho$ on $U$, and a smooth function $f$ on $U$ such that $\f|_U=\rho+f$.
\begin{definition}
    Let $\theta$ be a smooth (1,1) form on $X$. One says that 
    \begin{itemize}
        \item a quasi-psh function $\f$ is  {\em $\theta$-plurisubharmonic} ({\em $\theta$-psh} for short) if $\theta+\dc \f\geq 0$ in the sense of currents. We let $\PSH(X,\theta)$ denote the space of all $\theta$-psh functions on $X$ which are not identically $-\infty$;
        \item  $T$ is a positive (1,1) current on $X$ in the $\ddbar$-class $[\theta]$ if $T=\theta+\dc\f$ for some $\f\in\PSH(X,\theta)$, and vice versa.
    \end{itemize}
    For any positive (1,1) current $T$, we let $\Z(T)$ denote the {\em singular set} of $T$, namely, if $T=\theta+\dc\f$ then $\Z(T)=\{\f=-\infty\}$.
\end{definition}
 We refer the reader to nice references~\cite{demaillycomplex,guedj2017degenerate} for more details about the basic properties of psh functions and positive currents. 

\medskip
  The {\em complex Monge--Amp\`ere measure} $(\theta+\dc u)^n$ is well defined for any 
$\theta$-psh function $u$, which is {\em bounded} on $X$, as follows from Bedford--Taylor's theory; cf.~\cite{dinew2012pluripotential} or~\cite{guedj2022quasi2} for a brief recap. We recall the following maximum principle.

\begin{lemma}[Maximum principle] \label{lem: max-princ}
    Let $\f$, $\psi$ be bounded $\theta$-psh functions in $U\subseteq X $ such that $\f\geq \psi$. Then
    \[ \mathbf{1}_{\{\f=\psi\}}(\theta+\dc\f)^n\geq\mathbf{1}_{\{\f=\psi\}}(\theta+\dc\psi)^n.\]
\end{lemma}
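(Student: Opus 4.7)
The plan is to reduce to a local statement and then apply the standard Bedford--Taylor comparison technique, which carries over to the Hermitian setting essentially unchanged. Since the claim is purely local, I would fix a relatively compact coordinate ball $B \Subset U$ on which $\f, \psi$ are bounded and $\theta$-psh. For $\e > 0$ I set
\[ u_\e := \max(\f,\, \psi + \e), \]
a bounded $\theta$-psh function on $B$. By the locality of the Bedford--Taylor complex Monge--Amp\`ere operator (which carries over verbatim to the Hermitian setting, as it relies only on local identities among bounded quasi-psh functions), the measure $(\theta + \dc u_\e)^n$ equals $(\theta + \dc \psi)^n$ on the open set $\Omega_\e := \{\psi + \e > \f\}$ and equals $(\theta + \dc\f)^n$ on the open set $\{\psi + \e < \f\}$.

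Since $\f \geq \psi$, the family $u_\e$ decreases monotonically to $\max(\f,\psi) = \f$ as $\e \to 0^+$; by the Bedford--Taylor continuity theorem for decreasing sequences of bounded (quasi-)psh functions, $(\theta + \dc u_\e)^n$ will converge weakly to $(\theta + \dc\f)^n$. For any continuous $\chi \geq 0$ with compact support in $B$, I would split the total mass on the three Borel pieces $\Omega_\e$, $\{\psi+\e=\f\}$, $\{\psi+\e<\f\}$ and discard the nonnegative contribution from the ``diagonal'' $\{\psi+\e=\f\}$ to obtain
\[ \int_B \chi\, (\theta+\dc u_\e)^n \;\geq\; \int_{\Omega_\e} \chi\,(\theta+\dc\psi)^n + \int_{\{\psi+\e<\f\}} \chi\,(\theta+\dc\f)^n. \]

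Set $A := \{\f = \psi\}$. Using $\f \geq \psi$, one checks that $A = \bigcap_{\e > 0} \Omega_\e$, while $\{\psi + \e < \f\} \uparrow A^c$ as $\e \downarrow 0$. Applying the monotone convergence theorem to the positive Borel measures $(\theta + \dc\psi)^n$ and $(\theta+\dc\f)^n$ on the right, and the weak convergence on the left, letting $\e \downarrow 0$ will produce
\[ \int_B \chi\,(\theta+\dc\f)^n \;\geq\; \int_A \chi\,(\theta+\dc\psi)^n + \int_{A^c} \chi\,(\theta+\dc\f)^n. \]
Subtracting $\int_{A^c}\chi\,(\theta+\dc\f)^n$ from both sides and letting $\chi$ range over nonnegative compactly supported continuous functions on $B$ should yield the claimed pointwise measure inequality on $B$; since $B$ was arbitrary, the lemma follows.

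The main (and essentially only) obstacle will be the nonnegative ``diagonal'' contribution from $\{\psi + \e = \f\}$: this term is precisely what forces the final result to be an inequality rather than an equality, and it is exactly to guarantee $A \subset \Omega_\e$ for every $\e > 0$ that one must choose the asymmetric approximation $\max(\f,\psi + \e)$ rather than the symmetric alternative $\max(\f - \e,\psi)$.
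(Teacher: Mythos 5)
Your proof is correct and follows essentially the same route as the paper, which simply invokes the Bedford--Taylor maximum principle (in the local context) whose standard proof is exactly your comparison via $u_\e=\max(\f,\psi+\e)$, locality of the Monge--Amp\`ere operator on the open sets $\{\psi+\e>\f\}$ and $\{\psi+\e<\f\}$, and continuity along decreasing sequences — all of which indeed carry over to bounded $\theta$-psh functions with $\theta$ non-closed. The only points worth making explicit are that the monotone limits of the set integrals use local finiteness of the Monge--Amp\`ere masses of bounded potentials, and that the final comparison of Radon measures against nonnegative compactly supported continuous test functions upgrades to the stated Borel-measure inequality.
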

\begin{proof}
    The proof directly follows Bedford--Taylor's maximum principle~\cite[Theorem 3.27]{guedj2017degenerate} in the local context. See also~\cite[Lemma 1.2]{guedj2022quasi2} for a proof in the global one.
\end{proof}
\begin{definition} Let $\mathcal{I}\subset\mathcal{O}_X$ be an analytic coherent ideal sheaf and $c>0$.
    A quasi-psh function $\f$ on X is said to have {\em almost analytic singularities}  of type $(\mathcal{I},c)$ if the following are satisfied 
    \begin{enumerate}[label=(\roman*)]
          \item for any $x\in X$, there exists a neighborhood $U$ of $x$  such that on $U$ we have
        \[ \f=c\log \left(\sum_j|f_j|^2 \right)+g,\]
        where the $f_j$'s are local generators of a coherent sheaf $\mathcal{I}$, and $g$ is a bounded function on $U$, 
        % is smooth outside the analytic subset $V(\mathcal{I})$ induced by $\mathcal{I}$,
        \item there is a (global) proper modification $\pi: X'\to X$ of $X$, obtained as a finite composition of blow-ups with smooth
centers, and an effective divisor $\sum_\ell a_\ell D_\ell $ with normal crossings
       such that one can write $\f\circ\pi$ locally as
       \[ \f\circ \pi=c\sum_\ell a_\ell \log |g_\ell|^2 +h,\]
        where $(g_\ell=0)$ are local equations of the divisors $D_\ell$ and  
       $h$ is a smooth function.
    \end{enumerate}      
       % \item there is a (global) proper modification $\pi: X'\to X$ of $X$ such that $\pi^*\mathcal{I}$ is the principal ideal, generated by a  normal crossing divisor $\sum_j a_jD_j$ with $a_j> 0$, and for any $x'\in X'$, there exists a local chart $W$ and holomorphic coordinates $z'=(z_1',\ldots, z_n')$ around $x'$, with $ \cup D_j\cap W=\{z'_1\cdots z'_p=0\}$, $1\leq p\leq n$, we have\[\f\circ \pi(z')=c\left(\sum_{j=1}^p a_j\log|z_j'|^2\right)+h. \]
If a current $T=\theta+\dc\f$ with $\f$ a quasi-psh function having almost analytic singularities (along subscheme $V(\mathcal{I})$ induced by $\mathcal{I}$), then one says that $T$ has {\em almost analytic singularities}.
    % In this case, we denote by $Z(T)$ the singular locus of $T$. 
\end{definition}
% The condition $g$ is smooth outside $V(\mathcal{I})$ is superfluous since it indeed follows from $(ii)$.
We emphasize that the definition above is a bit lengthy, but turns out to be the most
suitable one for this paper. We will see below that a $\ddbar$-class with appropriate positivity contains plenty of currents with such singularities.
\begin{definition}
    We say that a real (1,1)-current $T$ is a {\em Hermitian current} if there exists a Hermitian metric $\omega$ such that $T\geq \omega$. 

    The $\ddbar$-class $[\theta]$ is said to be {\em big} if it contains a Hermitian current, i.e., there exists $\f\in\PSH(X,\theta)$ such that $T=\theta+\dc\f\geq \omega$ for some Hermitian form $\omega$. In this case, the smooth representative $\theta$ is also called big. 
\end{definition}
 We introduce the notion of psh envelopes understood as the {\em least singularity} function lying below a prescribed one.
\begin{definition}\label{def: envelope}
    Given a Borel function $f:X\to\mathbb{R}$, we define the {\em $\theta$-psh envelope} of $f$ by
    \[ P_\theta(f):=(\sup\{u\in\PSH(X,\theta): u\leq f \})^*,\]
    where the $^*$ means that we take the upper semi-continuous regularization.
\end{definition}
The $\theta$-psh envelope $P_\theta(f)$ might be $\equiv -\infty$ when the $\ddbar$-class $[\theta]$ contains no positive currents. When $\theta$ is big, and $f$ is bounded from below, $P_\theta(f)$ is well defined as a $\theta$-psh function. 

% \medskip
We recall the basic type of regularization due to Demailly; cf.~\cite[Lemma 2.1, Theorem 3.2]{demailly2004numerical}.
\begin{theorem}[Demailly's regularization theorem] \label{thm: regularization} Let $X$ be a compact complex manifold equipped with a Hermitian form $\omega$.
    Let $T=\theta+\dc\f $ be a real (1,1) current; i.e., $\theta$ is a smooth real (1,1) form and $\f$ is quasi-psh.  
    Suppose that $T\geq \gamma$ for some smooth (1,1) form $\gamma$. Then, there exists a decreasing sequence of quasi-psh functions $\f_m$ such that \begin{enumerate}
        \item $T_m=\theta+\dc\f_m$ are currents with {almost analytic singularities};
        \item $T_m\to T$ weakly, and $\f_m$ decreases to $\f$ as $m\to\infty$;
        \item $T_m\geq \gamma-\varepsilon_m\omega$ for some sequence $\varepsilon_m\searrow 0$.
    \end{enumerate}
\end{theorem}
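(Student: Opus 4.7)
The plan is to follow Demailly's classical approximation procedure, first reducing to the local plurisubharmonic case and then performing a Bergman-kernel construction patched via a Richberg-type gluing.

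First I would work locally. Take a finite Stein cover $\{U_j\}$ of $X$ by coordinate balls on each of which $\theta = \dc\eta_j$ for a smooth potential $\eta_j$; then $\psi_j := \f + \eta_j$ is plurisubharmonic on $U_j$, and the hypothesis $T\geq \gamma$ translates into a uniform lower bound on $\dc\psi_j$. On each $U_j$ consider the Hilbert space $\mathcal{H}_{j,m}$ of holomorphic functions $f$ with
$$\|f\|_{j,m}^2 := \int_{U_j} |f|^2 e^{-2m\psi_j}\, dV < +\infty,$$
and set
$$\psi_{j,m}(z) := \frac{1}{2m}\log\sum_\ell |\sigma_{j,\ell,m}(z)|^2 - \eta_j(z),$$
where $\{\sigma_{j,\ell,m}\}$ is an orthonormal basis. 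A standard application of the Ohsawa--Takegoshi $L^2$-extension theorem (with uniform constants on relatively compact subsets) yields
$$\f(z) - \frac{C}{m} \leq \psi_{j,m}(z) \leq \sup_{|\zeta - z| < r}\f(\zeta) + \frac{1}{m}\log\frac{C}{r^n},$$
so that $\psi_{j,m}\to\f$ pointwise, and a direct computation of $\dc \psi_{j,m}$ gives a lower bound of the form $\theta + \dc\psi_{j,m} \geq \gamma - (C/m)\,\omega$.

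Next I would glue these local regularizations into a single global quasi-psh function $\f_m$ with controlled loss of positivity. After normalization via the cocycle $\eta_i-\eta_j$, the local potentials agree up to $O(1/m)$ on overlaps, so a regularized maximum against a partition of unity (Richberg's procedure) produces a global quasi-psh function $\f_m$ satisfying $T_m := \theta + \dc\f_m \geq \gamma - \varepsilon_m\omega$ with $\varepsilon_m\to 0$. By construction $\f_m$ has singularities of the form $\frac{1}{2m}\log\sum |g_\alpha|^2 + O(1)$, where the $g_\alpha$ generate a coherent ideal sheaf $\mathcal{I}_m\subset\mathcal{O}_X$; applying Hironaka's resolution of singularities to $\mathcal{I}_m$ yields a proper modification $\pi_m:X'_m\to X$ along smooth centers such that $\pi_m^*\mathcal{I}_m$ is principal and cut out by a simple-normal-crossings divisor, giving the required asymptotic description of $\f_m\circ\pi_m$. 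Thus each $T_m$ has almost analytic singularities, which is item (1).

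To obtain a truly \emph{decreasing} sequence (item (2)) and weak convergence to $T$, one may pass to a subsequence $m_k\nearrow\infty$ and replace $\f_{m_k}$ by $\widetilde\f_k := \sup_{\ell\geq k}(\f_{m_\ell} + \delta_\ell)$ for a suitable sequence $\delta_\ell\searrow 0$; by the upper bound on $\psi_{j,m}$ these suprema are still quasi-psh with almost analytic singularities, and they decrease pointwise to $\f$, hence $T_m\to T$ weakly. The bound in (3) is preserved with an enlarged $\varepsilon_m$ that still tends to $0$. \textbf{The main obstacle} is the gluing step: in the non-K\"ahler setting there is no global Hilbert space of holomorphic sections, so one must track the cocycle discrepancies between the local Bergman potentials and the additional $O(1/m)$ loss of positivity coming from the regularized maximum, while simultaneously arranging that a single coherent ideal sheaf $\mathcal{I}_m$ captures the singularities globally so that Hironaka's theorem can be invoked.
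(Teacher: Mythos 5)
Your overall route is the same one the paper relies on: it does not reprove the theorem but cites Demailly (\cite[Lemma 2.1, Theorem 3.2]{demailly2004numerical}) and sketches exactly your outline --- local Bergman-kernel approximation in weighted spaces $\mathcal{H}(B,m\varphi)$, gluing of the local potentials with a small loss of positivity, and principalization of the resulting ideal sheaf by Hironaka. However, two of your steps have genuine gaps as written.

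First, the monotonicity step. Replacing $\varphi_{m_k}$ by $\widetilde\varphi_k:=\sup_{\ell\geq k}\bigl(\varphi_{m_\ell}+\delta_\ell\bigr)$ sacrifices item (1): a supremum over an infinite family of functions with analytic singularities (even after upper semicontinuous regularization, which you would also need) has no reason to have almost analytic singularities, so the sequence you end up with satisfies (2)--(3) but not (1). The classical device is different and avoids any supremum: the Ohsawa--Takegoshi theorem applied on the diagonal of $B\times B$ yields the subadditivity property of the Bergman approximants,
\begin{equation*}
\varphi_{m_1+m_2}\;\leq\;\frac{m_1\varphi_{m_1}+m_2\varphi_{m_2}}{m_1+m_2}+\frac{C}{m_1+m_2},
\qquad\text{in particular}\quad \varphi_{2m}\leq\varphi_m+\frac{C}{2m},
\end{equation*}
so that along the subsequence $m=2^k$ the functions $\varphi_{2^k}+C\,2^{-k}$ are \emph{genuinely} decreasing, still have the same (almost) analytic singularities (only constants were added), and still converge pointwise to $\varphi$. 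This is the actual content behind the ``decreasing'' assertion in the statement, and it is not recovered by your tail-supremum trick.

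Second, the positivity bound (3). With your weight $e^{-2m\psi_j}$, $\psi_j=\varphi+\eta_j$, the Bergman potential $\frac{1}{2m}\log\sum_\ell|\sigma_{j,\ell,m}|^2$ is merely plurisubharmonic, so the ``direct computation'' only gives $\theta+\dc\psi_{j,m}\geq 0$, not $\theta+\dc\psi_{j,m}\geq\gamma-\frac{C}{m}\,\omega$. To keep track of $\gamma$ you must either approximate the function $\varphi+\eta_j-g_j$, where $g_j$ is a local potential with $\dc g_j=\gamma$ (this is psh precisely because $T\geq\gamma$), and then add $g_j$ back, or use Demailly's exponential-map/jet construction; the loss $\varepsilon_m\omega$ then arises only from the change of coordinates in the gluing, as the paper's sketch indicates. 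Both repairs are standard, but as stated your argument does not deliver (1)--(3) simultaneously.
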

By replacing $T$ (resp. $\gamma$) with $T-\theta$ (resp. $\gamma-\theta$) we can work with $T=\dc\f$ a closed quasi-positive current.
The proof relies on the appropriate technique using an asymptotic Bergman kernel procedure as we describe what happens locally. If $\f$ is a psh function on the ball $B\subset \mathbb{C}^n$ one considers for any $m\geq 1$ the Hilbert space $\mathcal{H}(B,m\f)$ of $L^2$ holomorphic function on $B$ such that $\int_B|f|^2e^{-m\f}dV<\infty$. We choose a Hilbert basic $(f_k^m)_{k\in\mathbb{N}}$ of $\mathcal{H}(B,m\f)$ and put $\f_m:=\frac{1}{m}\log\left(\sum_{k\in \mathbb{N}}|f_k^m|^2 \right)$. One can check that $\f_m$ is well defined, psh, with analytic singularities, and almost increasing. The difficult part then is to prove that $\f_m$ converges to $\f$ as $m$ goes to infinity. To get the global approximations, one glues the local ones by using a partition of unity argument. In this gluing process, a small loss of positivity is established as a result of the change of holomorphic coordinates. This is where the $\varepsilon_m$'s come from. For each $m$, the corresponding modification $\mu_m$ is then obtained by principalization of singularities of the multiplier ideal sheaf $\mathcal{I}(m\f)$ so that $\mu_m^*\mathcal{I}(m\f)$ is the principal ideal sheaf associated with a normal crossing divisor, due to the fundamental result of Hironaka; cf.~\cite{wlodarczyk09-Resolution}. 

\medskip
From the above regularization, 
%we are able to approximate a positive current by Hermitian currents with almost analytic singularities, with reasonable control of singularities.  In particular, 
a big $\ddbar$-class $[\theta]$ contains various Hermitian currents with almost analytic singularities.
% : if $T\in [\theta]$ such that $T\geq \omega$ then $T_m\geq (1-\varepsilon_m)\omega$. 

We introduce the definition of the non-Hermitian locus of a big $\ddbar$-class, analogous to the one in~\cite{boucksom2004divisorial}. 
\begin{definition}[non-Hermitian locus] If $\theta$ is a big (1,1) form on $X$, we define its {\em non-Hermitian locus} to be the set $$\NK(\theta)=\bigcap_{T\in [\theta]}\Z(T),$$
    where the intersection is taken over all Hermitian currents with almost analytic singularities in the $\ddbar$-class $[\theta]$.
\end{definition}

We investigate the bigness in some particular context of a fixed Hermitian metric.
\begin{theorem}[Guedj--Lu] \label{thm: guedjlu}
     Let $X$ be a compact complex manifold equipped with a Hermitian metric $\omega_X$
     satisfying $v_+(\omega_X)<+\infty$. Let $\theta$ be a nef (1,1) form with $\widehat v_-(X,\theta)>0$. Then the $\ddbar$-class $[\theta]$ is big, i.e., there exists a Hermitian current $T$ in $[\theta]$ with almost analytic singularities.
\end{theorem}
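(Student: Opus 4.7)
The strategy I would use is a Chiose--Popovici mass concentration argument in the spirit of the simplified proof of Demailly--P\u aun, adapted to the non-closed setting, followed by Demailly's regularization.

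First, I would reduce the question to Hermitian reference forms. Since $\theta$ is nef, for each $\varepsilon>0$ pick $\psi_\varepsilon\in\mathcal{C}^\infty(X,\mathbb{R})$ with $\theta+\dc\psi_\varepsilon\geq -\varepsilon\omega_X$. Then
\[\omega_\varepsilon:=(1-\varepsilon)(\theta+\dc\psi_\varepsilon)+\varepsilon\omega_X\geq \varepsilon^2\omega_X\]
is a Hermitian metric representing the class $[(1-\varepsilon)\theta+\varepsilon\omega_X]$, and by definition of $\widehat v_-(X,\theta)$ we have $\int_X \omega_\varepsilon^n\geq \widehat v_-(X,\theta)=:2c_0>0$ (at least for $\varepsilon$ close to $0$). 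The hypothesis that $v_+(X,\omega_X)<\infty$, together with the independence of $v_+$ from the choice of Hermitian metric, furnishes a compatible upper bound on Monge--Amp\`ere volumes of Hermitian representatives in $[\omega_\varepsilon]$.

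Second, I would produce a $\omega_\varepsilon$-psh function $\phi_\varepsilon$ with $\omega_\varepsilon+\dc\phi_\varepsilon\geq\delta\omega_X$ in the sense of currents, for some $\delta>0$ \emph{independent of $\varepsilon$}. Using Tosatti--Weinkove's existence theorem for the complex Monge--Amp\`ere equation on compact Hermitian manifolds, I would solve
\[(\omega_\varepsilon+\dc u_\varepsilon)^n=M_\varepsilon\,\omega_X^n,\qquad \sup_X u_\varepsilon=0,\]
where $M_\varepsilon>0$ is forced by the equation; by the two-sided volume bounds, $M_\varepsilon$ stays in a fixed compact subinterval of $(0,\infty)$ as $\varepsilon\to 0$. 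Then, following the Chiose--Popovici strategy, one tests an integration-by-parts identity against a suitably rescaled candidate potential; the boundary contributions coming from the non-closedness of $\omega_\varepsilon$ are absorbed using the finiteness of $v_+$, producing the required uniform positivity.

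Third, pass to the limit $\varepsilon\to 0$: the normalized potentials $\chi_\varepsilon:=(1-\varepsilon)\psi_\varepsilon+\phi_\varepsilon-\sup_X\bigl((1-\varepsilon)\psi_\varepsilon+\phi_\varepsilon\bigr)$ lie in a relatively compact subset of $L^1(X)$ by standard pluripotential compactness and admit a weak cluster point $\f\in\PSH(X,\theta)$; weak continuity of $\dc$ yields $\theta+\dc\f\geq\delta\omega_X$, so $T:=\theta+\dc\f$ is a Hermitian current in $[\theta]$. Applying Demailly's regularization theorem~\ref{thm: regularization} to $T$ with $\gamma=\delta\omega_X$ yields a sequence of currents with almost analytic singularities dominating $(\delta-\varepsilon_m)\omega_X$; for $m$ large this is the required Hermitian current with almost analytic singularities in $[\theta]$.

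The principal obstacle is the $\varepsilon$-uniform lower bound $\delta>0$ in the second step. In the closed K\"ahler case this is exactly Demailly--P\u aun's mass concentration theorem; here, the non-closedness of $\theta$ produces error terms involving $d\omega_\varepsilon$ in the usual integration-by-parts identities, and these must be absorbed via the upper volume condition $v_+(X,\omega_X)<\infty$. This is precisely where the hypothesis~\eqref{H} enters and cannot be omitted.
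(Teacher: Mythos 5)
There is a genuine gap, and it sits exactly where you flag ``the principal obstacle'': your second step, the production of $\phi_\varepsilon$ with $\omega_\varepsilon+\dc\phi_\varepsilon\geq\delta\omega_X$ for a $\delta>0$ independent of $\varepsilon$, is the entire content of the theorem, and the mechanism you propose does not deliver it. Solving $(\omega_\varepsilon+\dc u_\varepsilon)^n=M_\varepsilon\,\omega_X^n$ with $M_\varepsilon$ pinched in a compact subinterval of $(0,\infty)$ only controls the product of the eigenvalues of $\omega_\varepsilon+\dc u_\varepsilon$ relative to $\omega_X$; it in no way prevents the smallest eigenvalue from degenerating to $0$ as $\varepsilon\to 0$ (with the largest blowing up), so no uniform $\delta$ comes out of this equation alone. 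The phrase ``one tests an integration-by-parts identity against a suitably rescaled candidate potential'' is not an argument: there is no identified candidate, no identified identity, and no reason the error terms close up into a positive lower bound on the current rather than merely on integrated quantities.

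The missing idea is Lamari's Hahn--Banach duality, which is what lets one avoid constructing the Hermitian current directly: $[\theta]$ contains a current $\geq\delta_0\omega_X$ if and only if $\int_X\theta\wedge\eta^{n-1}\geq\delta_0\int_X\omega_X\wedge\eta^{n-1}$ for every Gauduchon metric $\eta$. The paper argues by contradiction with this numerical criterion: if it fails for every $\delta_0$, one gets Gauduchon metrics $\eta_\varepsilon$, normalized by $\int_X\omega_X\wedge\eta_\varepsilon^{n-1}=1$, with $\int_X\theta\wedge\eta_\varepsilon^{n-1}\leq\varepsilon$, and then solves the Tosatti--Weinkove equation with the \emph{Gauduchon-weighted} right-hand side $(\theta_\varepsilon+\dc u_\varepsilon)^n=c_\varepsilon\,\omega_X\wedge\eta_\varepsilon^{n-1}$ (not $\omega_X^n$). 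The Gauduchon property makes $\int_X(\theta_\varepsilon+\dc u_\varepsilon)\wedge\eta_\varepsilon^{n-1}=\int_X(\theta+\varepsilon\omega_X)\wedge\eta_\varepsilon^{n-1}\leq 2\varepsilon$, the hypothesis $v_+(X,\omega_X)<\infty$ bounds $\int_X(\theta_\varepsilon+\dc u_\varepsilon)^{n-1}\wedge\omega_X$ from above, and $\widehat v_-(X,\theta)>0$ bounds $c_\varepsilon$ from below; the Popovici/Guedj--Lu Cauchy--Schwarz inequality then yields $2\varepsilon\,M v_+(\omega_X)\geq \widehat v_-(X,\theta)/n$, a contradiction as $\varepsilon\to 0$. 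Your final step (Demailly regularization with $\gamma=\delta\omega_X$ to get almost analytic singularities) matches the paper and is fine, but without the duality step and the $\eta_\varepsilon$-weighted Monge--Amp\`ere equation the core of the proof is not there.
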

\begin{proof}
    The proof was given in~\cite[Theorem 4.6]{guedj2022quasi2}, which was inspired by Chiose \cite{chiose2016kahler} and Popovici \cite{popovici2016sufficients}
    based on the characterization result of Lamari \cite{lamari1999courants}. 

We note that, without the nefness condition, Boucksom, Guedj and Lu~\cite{BoucksomGuedjLu2025-volume} recently proved that the positivity of the lower volume of $\theta$ implies the existence of Hermitian current. 
\end{proof}
\begin{theorem}\label{thm: boucksom317}
    Let $\theta$ be a big (1,1)-form. Then there exists a Hermitian current $T_0$ with almost analytic singularities such that $\NK(\theta)=\Z(T_0)$. 
    In particular, the non-Hermitian locus of a big class $[\theta]$ is an analytic subvariety of $X$. 
\end{theorem}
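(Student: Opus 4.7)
The plan is to adapt Boucksom's argument for the non-K\"ahler locus of a big class (\cite[Theorem 3.17]{boucksom2004divisorial}) to the Hermitian setting; the non-closedness of $\theta$ plays no role here, since Theorem~\ref{thm: regularization} does not require it. Let $\mathcal{F}$ denote the family of Hermitian currents in $[\theta]$ with almost analytic singularities. Bigness of $[\theta]$ together with Theorem~\ref{thm: regularization} yields $\mathcal{F}\neq\emptyset$, and by definition $\NK(\theta)=\bigcap_{T\in\mathcal{F}}\Z(T)$. The goal is to realize this intersection as $\Z(T_0)$ for a single $T_0\in\mathcal{F}$.

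The core step is a binary intersection lemma. Given $T_i=\theta+\dc\f_i\geq\omega_i$ in $\mathcal{F}$ for $i=1,2$, consider $\tilde\f:=\max(\f_1,\f_2)$. It is $\theta$-psh, and
\[
\{\tilde\f=-\infty\}=\Z(T_1)\cap\Z(T_2).
\]
By compactness of $X$ one has $\omega_i\geq c\,\omega_X$ for some uniform $c>0$, and checking locally on the open sets $\{\f_1>\f_2\}$, $\{\f_2>\f_1\}$ and passing across the coincidence set via regularized maxima shows that $\tilde T:=\theta+\dc\tilde\f\geq c\,\omega_X$ in the sense of currents; hence $\tilde T$ is Hermitian. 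Apply Theorem~\ref{thm: regularization} to $\tilde T$ with lower bound $\gamma=c\,\omega_X$ to obtain a decreasing sequence $\tilde T_m=\theta+\dc\tilde\f_m$ with almost analytic singularities, $\tilde\f_m\searrow\tilde\f$, and $\tilde T_m\geq(c-\varepsilon_m)\omega_X$. For $m$ large enough $\tilde T_m\in\mathcal{F}$, and monotonicity forces $\{\tilde\f_m=-\infty\}\subseteq\{\tilde\f=-\infty\}$, so
\[
\Z(\tilde T_m)\subseteq\Z(T_1)\cap\Z(T_2).
\]

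Iterating, every finite intersection $\Z(T_1)\cap\cdots\cap\Z(T_k)$ of elements of $\mathcal{F}$ contains some $\Z(T)$ with $T\in\mathcal{F}$. Since each $\Z(T)$ is an analytic subvariety of $X$, the descending chain condition on analytic subsets of the compact manifold $X$ (strong Noetherianity of coherent analytic sheaves) implies that the downward-directed family $\{\Z(T)\}_{T\in\mathcal{F}}$ admits a minimum $\Z(T_0)$ for some $T_0\in\mathcal{F}$. Consequently
\[
\NK(\theta)=\bigcap_{T\in\mathcal{F}}\Z(T)=\Z(T_0),
\]
which is the desired analytic subvariety.

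The main technical point is the combined positivity and singularity control for $\tilde T$, since $\max(\f_1,\f_2)$ does not itself have almost analytic singularities even when the $\f_i$ do. The positivity estimate $\tilde T\geq c\,\omega_X$ is handled by regularized-maximum approximation, and the singularity inclusion $\Z(\tilde T_m)\subseteq\Z(T_1)\cap\Z(T_2)$ is immediate from the monotone decreasing convergence $\tilde\f_m\searrow\tilde\f$ built into Demailly's construction. Once these are in place, the argument reduces to routine Noetherian descent.
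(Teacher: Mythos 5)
Your argument is correct and follows essentially the same route as the paper's proof: take the maximum of the two potentials to get a Hermitian current whose polar set lies in $\Z(T_1)\cap\Z(T_2)$, upgrade it to almost analytic singularities via Theorem~\ref{thm: regularization} (using that the decreasing approximants dominate the maximum), and then conclude by the strong Noetherian property of analytic subsets. Your phrasing of the descent via a minimal element of the downward-directed family $\{\Z(T)\}$ is just a slightly cleaner packaging of the paper's stationary decreasing sequence, so nothing essential differs.
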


\begin{proof} The proof is quite close to that of~\cite[Theorem 3.17]{boucksom2004divisorial}.
We denote by $\mathcal{S}$ the set of all Hermitian currents with almost analytic singularities in the class $[\theta]$. Fix $T_j=\theta+\dc\f_j\in\mathcal{S}$ for $j=1,2$. First, we claim that there exists a Hermitian current with almost analytic singularities $T$ such that $\Z(T)\subset \Z(T_1)\cap \Z(T_2)$. Indeed, if we set $\f_3=\max(\f_1,\f_2)$ then $T_3=\theta+\dc\phi_3$ is a Hermitian current such that $(\f_3=-\infty)\subset \Z(T_1)\cap \Z(T_2)$. By Theorem~\ref{thm: regularization}, we can find a Hermitian current $T=\theta+\dc\f\in \mathcal{S}$  with almost analytic singularities such that $\f\geq \f_3$ so $\Z(T)\subset \Z(T_1)\cap \Z(T_2)$. This proves our claim.

Thanks to the claim above and Theorem~\ref{thm: regularization} again, we can construct a sequence of currents $T_m=\theta+\dc\f_m\in \mathcal{S}$  such that $\Z(T_m)$ is a decreasing sequence of analytic subsets. By the strong Noetherian property, $\Z(T_m)$ must be stationary for $m\geq m_0$. Therefore, we obtain $\NK(\theta)=\Z(T_{0})$ for $T_0\coloneqq T_{m_0}$, as desired. 

Moreover, if $\NK(\theta)=\varnothing$ then $\Z(T_0)=\varnothing$, which means that $T_0$ is smooth on $X$, so $T_0$ is in fact a Hermitian form.
\end{proof}
In summary, we have the following.
\begin{theorem}\label{thm: current-anasing}
    Let $X$ be a compact complex manifold equipped with a positive form $\omega_X$ satisfying $v_+(\omega_X)<+\infty$. Let $\theta$ be a nef (1,1) form with $\widehat v_-(X,\theta)>0$. Then there exists a Hermitian current $T$ in the $\ddbar$-class $[\theta]$ with almost analytic singularities such that
    \[\Z(T)=\NK(\theta).\]
\end{theorem}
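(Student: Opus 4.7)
The plan is simply to combine the two preceding results, Theorem~\ref{thm: guedjlu} and Theorem~\ref{thm: boucksom317}, since the statement is essentially a packaged form of what they give together.

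First I would invoke Theorem~\ref{thm: guedjlu}: the hypotheses $v_+(\omega_X)<+\infty$, $\theta$ nef, and $\widehat v_-(X,\theta)>0$ are exactly those required there, and the conclusion is that the $\ddbar$-class $[\theta]$ is big. This already provides at least one Hermitian current in $[\theta]$, which by Demailly's regularization (Theorem~\ref{thm: regularization}) may be taken to have almost analytic singularities.

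Once bigness is established, I would apply Theorem~\ref{thm: boucksom317} to the big $(1,1)$-form $\theta$. That theorem produces a Hermitian current $T_0\in[\theta]$ with almost analytic singularities satisfying $\Z(T_0)=\NK(\theta)$, by the Noetherian argument on the decreasing sequence of singular sets obtained via the $\max$-regularization trick. Setting $T:=T_0$ gives exactly the desired current.

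There is no real obstacle: the whole content is in Theorems~\ref{thm: guedjlu} and \ref{thm: boucksom317}, and the present statement is just their composition, which makes explicit that in the non-K\"ahler setting the non-Hermitian locus is realized as the singular set of a single Hermitian current with almost analytic singularities, under the quantitative hypothesis $\widehat v_-(X,\theta)>0$. The only small point to check is that the Hermitian current supplied by Theorem~\ref{thm: guedjlu} belongs to the class $\mathcal{S}$ used in the proof of Theorem~\ref{thm: boucksom317}, which is immediate since Theorem~\ref{thm: guedjlu} already delivers almost analytic singularities.
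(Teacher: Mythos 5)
Your proposal matches the paper exactly: the paper states this theorem with the words ``To summarize'' and gives no separate proof, precisely because it is the composition of Theorem~\ref{thm: guedjlu} (bigness of $[\theta]$ under the stated volume hypotheses) with Theorem~\ref{thm: boucksom317} (existence of a single Hermitian current with almost analytic singularities whose singular set realizes $\NK(\theta)$). Your argument is correct and is the intended one.
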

We recall the following lemma, which implies that the non-Hermitian locus of a big form has no isolated points.
\begin{lemma}~\label{lem: isolated}
     If $\theta$ is a big (1,1)-form, then the analytic set $\nh(\theta)$ has no isolated points.
\end{lemma}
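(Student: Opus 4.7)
The plan is to argue by contradiction: assuming $x_0\in\nh(\theta)$ is isolated in this analytic set, I would construct a Hermitian current in $[\theta]$ with almost analytic singularities whose singular set avoids $x_0$, violating the very definition of the non-Hermitian locus.

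First I would invoke Theorem~\ref{thm: boucksom317} to fix a Hermitian current $T_0=\theta+\dc\f_0\geq\omega$, with $\omega$ some Hermitian metric and $\f_0$ having almost analytic singularities, such that $\Z(T_0)=\nh(\theta)$, and then choose a coordinate ball $B\subset X$ centred at $x_0$ with $\overline{B}\cap\nh(\theta)=\{x_0\}$; on such a $B$ the function $\f_0$ is smooth on $\overline{B}\setminus\{x_0\}$ and bounded below on $\partial B$. In local coordinates $z$, the smooth function $\psi(z):=A|z|^2$ satisfies $\dc\psi=A\omega_{\rm std}$, so for $A$ large enough (depending on $\theta$ and $\omega$ on $\overline{B}$) one has $\theta+\dc\psi\geq\omega$ on $B$. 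Next I would pick a constant $C$ so that $\psi+C<\f_0$ on $\partial B$, while $\f_0\to-\infty$ at $x_0$ automatically forces $\psi+C>\f_0$ on a neighbourhood of $x_0$.

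The core step is to form the regularized maximum $\tilde\f:=\max_\eta(\f_0,\psi+C)$ on $B$ with parameter $\eta>0$ small enough, and extend by $\tilde\f:=\f_0$ outside $B$. The two pieces match on a neighbourhood of $\partial B$, so $\tilde\f$ is quasi-psh on all of $X$; it coincides with the smooth function $\psi+C$ on a neighbourhood of $x_0$, and is therefore bounded there. The key positivity assertion is $\theta+\dc\tilde\f\geq\omega$, which I expect to deduce from the standard fact that a regularized maximum of functions $u_i$ satisfying $\theta+\dc u_i\geq\omega$ again satisfies the same lower bound. Hence $T:=\theta+\dc\tilde\f$ would be a Hermitian current in $[\theta]$ whose singular set excludes $x_0$.

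To conclude, I would apply Theorem~\ref{thm: regularization} to $T$ and obtain a decreasing sequence $\tilde\f_m\searrow\tilde\f$ with $\theta+\dc\tilde\f_m\geq\omega-\e_m\omega_X$ of almost analytic singularities; for $m$ large, $\theta+\dc\tilde\f_m\geq\tfrac{1}{2}\omega$ remains Hermitian. Since $\tilde\f_m\geq\tilde\f$ and $\tilde\f$ is finite in a neighbourhood of $x_0$, we would have $x_0\notin\Z(\theta+\dc\tilde\f_m)$, contradicting $x_0\in\nh(\theta)$. The hard part will be the positivity-preservation step under the regularized max when $\theta$ is non-closed; since the statement is pointwise in the currents involved and does not require a local $\dc$-potential for $\theta$, the standard K\"ahler-case argument should carry over unchanged, but I would verify this carefully.
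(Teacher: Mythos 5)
Your argument is correct and is essentially the proof the paper invokes by citing \cite[Lemma 3.1]{collins2015kahler}: remove the isolated singularity by gluing in a smooth strictly $\theta$-psh local potential via a (regularized) maximum, then apply Demailly regularization to land back in the class of Hermitian currents with almost analytic singularities, contradicting the definition of $\nh(\theta)$. The one point you flagged --- that $\theta+\dc\max(u_1,u_2)\geq\omega$ persists for non-closed $\theta$ --- does hold (check positivity locally by comparing $\theta-\omega$ with a quadratic second-order approximation, at the cost of an $\varepsilon$-loss that vanishes in the limit), which is exactly the paper's remark that ``the closeness is not involved.''
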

\begin{proof}
   We refer to~\cite[Lemma 3.1]{collins2015kahler}. The closedness is not involved.
\end{proof}
\subsection{Extension on complex analytic spaces}\label{sect: singularspace}
We give here a brief recap of the extension of pluripotential theory to compact complex spaces. We refer the interested readers to~\cite{demailly1985measures} for more details.

Let $X$ be a reduced complex analytic space of pure dimension $n$.  We will let $X_{\rm reg}$ denote the locus of regular points of $X$, and $X_{\rm sing}:=X\backslash X_{\rm reg}$ the locus of singular points. We see that $X_{\rm reg}$ is a complex manifold of dimension $n$ and $X_{\rm sing}$ is an analytic subset of $X$ of complex codimension $\geq$ 1. If $X=\bigcup_\alpha U_\alpha$ is an open covering,
we consider local embeddings $j_\alpha:U_\alpha\hookrightarrow\mathbb{C}^N$ into analytic subsets of $\mathbb{C}^N$ for some $N\geq 1$. By these local embeddings, one can define the sheaves of smooth $(p,q)$-forms $\mathcal{A}^{p,q}$, and the space of currents is, by definition, the dual of the space of differential forms as in the smooth case. The differential operators ${\rm d}$, $\partial$, $\Bar{\partial}$, ${\rm d^c}$, and $\dc$ satisfy the usual rules and are then well defined by duality.
We can also define the concepts of Hermitian $\ddbar$-classes, nef $\ddbar$-classes, big $\ddbar$-classes, etc.
% We next move on to psh functions. There are essentially two analytic notions which extend the usual one for complex manifolds. 

 We next move on to the notion of psh functions. Similarly, by the local embeddings above, we say that a function $u:X\to\mathbb{R}\cup\{-\infty\}$ is psh on $X$ if it is locally the restriction of a psh function defined an open set $\Omega_\alpha$ of $\mathbb{C}^N$, which contains $j_\alpha(U_\alpha)$.

 Another way to define a psh function is introduced by Demailly: a function $u$ is weakly psh on $X$ if it is locally bounded from above on $X$ and its restriction to the complex manifold $X_{\rm reg}$ is psh. We can extend it to $X$ by $$u^*(x):=\limsup_{X_{\rm reg}\ni y\to x}u(y).$$ The function $u^*$ is upper semi-continuous, locally integrable on $X$, and $\dc u^*$ is a positive current. Since $X$ is locally irreducible, thanks to ~\cite[Th\'eor\`eme 1.10]{demailly1985measures}, $u^*$ is psh on $X$.   
 In the same way, one can define the notion of quasi-psh functions with almost analytic singularities, Hermitian currents, non-Hermitian loci, etc. 

 We end this section by discussing the integration over analytic (irreducible) subvarieties of a smooth complex manifold $X$. If $V$ is an analytic subvariety of $X$ of pure dimension $k>0$, for any smooth $(k,k)$ form $\alpha$ on $X$, then the integral
 \[\int_V\alpha:=\int_{V_{\rm reg}}\alpha \]
  is finite thanks to the fundamental result of Lelong; see, e.g.,~\cite[page 32]{griffiths1978principles}. In the same way, we can define the quantities $v_-(V,\omega_V)$, $v_+(V,\omega_V)$ and $\widehat v_-(V,\theta)$.
  Furthermore, for a smooth real $(k-1,k-1)$-form $\beta$, we have by~\cite[page 33]{griffiths1978principles},
  \[\int_V\dc\beta=0.\] In particular, we can extend Lemma~\ref{lem: GL4.4} to the singular context, namely, for any nef (1,1) form $\theta$ with $\dc\theta^\ell=0$ for all $1\leq \ell\leq k $, then \[\widehat v_-(V,\theta|_V)=\int_V\theta^k.\]

\section{Null loci and Nakai--Moishezon criterions}\label{sect: null}
\subsection{Extension theorem}
{
 % \color{blue}
 We define the null locus of a big form, following the algebraic version in~\cite{nakamaye00-base-loci}.
\begin{definition}
    [Null locus] Let $\theta$ be a big (1,1) form. We define the {\em null locus} of $\theta$, denoted by $\Null(\theta)$, to be the set
   \[\Null(\theta)=\bigcup_{\theta|_V\, \text{is not big}}V, \] where the union is taken over all purely positive-dimensional irreducible analytic subvarieties $V\subset X$.
\end{definition}}
We describe the null locus with the following example.
\begin{example}
    Let $Y$ be a compact complex manifold of dimension $n$ equipped with a positive (1,1) form $\omega_Y$. Let $\pi:X\to Y$ be a  blow-up of a point $y\in Y$ with the exceptional divisor $E=\pi^{-1}(y)\simeq\mathbb{C}\mathbb{P}^{n-1}$. Let $\mathcal{O}_X(E)$ denote the line bundle associated with $E$ and $\Theta_h(E)$ the curvature form associated with a Hermitian metric $h$ on $\mathcal{O}_X(E)$. 
    % The equivalence class $[\theta]=[\pi^*\omega_Y]$ satisfies the nef property, and has self-intersection $\int_X\theta^n=\int_Y\omega_Y^n>0$.
 The form $\theta=\pi^*\omega_Y$ is semipositive, so it is nef.   It was shown (from, e.g.,~\cite[Lemma 3.5]{demailly2004numerical}) that there exists a smooth Hermitian metric $h$ on the line bundle $\mathcal{O}_X(E)$ with curvature $\Theta_h(E)$ such that $\pi^*\omega_Y-\varepsilon\Theta_h(E)$ is positive definite on $X$ for $\varepsilon$ sufficiently small. In particular, let $s$ be a section of $\mathcal{O}_X(E)$ vanishing along $E$ of order 1. Then for any $\varepsilon$ small, we have that $\pi^*\omega_Y+\varepsilon\dc\log|s|^2_h$ is a Hermitian current on $X$ that belongs to the $\ddbar$-class $[\theta]$. Obviously, one sees that the latter current has almost analytic singularities precisely along $E$. Since $\theta$ is positive on $X\backslash E$,  we also observe that 
    \[E=\bigcup_{\theta|_V\;\text{is not big}}V, \] where the union is taken over all positive-dimensional irreducible analytic subvarieties $V\subset X$.
\end{example}
% \begin{definition}
%     [null locus] Let $[\theta]$ be a nef class and $\widehat v_-(X,\theta)$. We define the {\em null locus} of $\theta$, denoted by $\Sigma$, to be the set
%    \[\Sigma=\bigcup_{\widehat v_-(v,\theta|_V)=0}V, \] where the union is taken over all positive dimensional irreducible analytic subvarieties $V\subset X$.
% \end{definition}

The above example motivates us to prove our main theorem in this section.
\begin{theorem}\label{thm: main}
    Let $X$ be a compact complex manifold of dimension $n$. Let $\theta$ be a nef and big (1,1) form. Then
    \[\NK(\theta)=\Null(\theta).\]
    % If $n=2$ and $\theta$ is merely big, then the result still holds.
\end{theorem}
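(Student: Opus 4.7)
\emph{Proof proposal.} I would establish the two inclusions separately, with the hard inclusion proceeding by induction on $n = \dim X$ and relying on the extension-type theorem advertised in the introduction to Sect.~\ref{sect: null}.

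For the easy inclusion $\Null(\theta) \subset \NK(\theta)$: let $V \subset X$ be a positive-dimensional irreducible analytic subvariety with $\theta|_V$ not big, and pick any Hermitian current $T = \theta + \dc \f$ with almost analytic singularities in $[\theta]$. If $V \not\subset \Z(T)$, then since $\f$ has almost analytic singularities, the restriction $\f|_V$ is a well-defined quasi-psh function (not identically $-\infty$) on $V$, and $T|_V = \theta|_V + \dc(\f|_V)$ is a positive current on $V$ dominating a Hermitian metric on $V$. This would force $[\theta|_V]$ to be big, contradicting the hypothesis. Hence $V \subset \Z(T)$ for all such $T$, and by Theorem~\ref{thm: boucksom317} we conclude $V \subset \NK(\theta)$.

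For the hard inclusion $\NK(\theta) \subset \Null(\theta)$: I would induct on $n$, the base case $n=1$ being vacuous since any big $(1,1)$-class on a compact Riemann surface is Hermitian, so $\NK(\theta)=\varnothing$. For the inductive step, fix $x \in \NK(\theta)$. By Theorem~\ref{thm: boucksom317} there exists a Hermitian current $T_0 = \theta + \dc\f_0$ with almost analytic singularities such that $\NK(\theta) = \Z(T_0)$, so $\Z(T_0)$ is an analytic subvariety containing $x$; by Lemma~\ref{lem: isolated} it has no isolated points, so there is an irreducible component $V \ni x$ of positive dimension. The plan is to show that $\theta|_V$ cannot be big, which places $V$ inside $\Null(\theta)$ and in particular $x \in \Null(\theta)$. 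Suppose for contradiction that $\theta|_V$ is big on $V$ (working on a resolution $\pi\colon \widetilde V \to V$ if necessary). By the inductive hypothesis applied to $(\widetilde V, \pi^*\theta|_V)$, the non-Hermitian locus is the corresponding null locus on $\widetilde V$, so in particular one can produce a Hermitian current on $\widetilde V$ in the class $\pi^*[\theta|_V]$ whose singularity set is a proper analytic subset avoiding a chosen point over $x$. Pushing forward and then invoking the extension-type theorem for Hermitian currents with analytic singularities, one lifts this to a Hermitian current $T'$ in $[\theta]$ on $X$ whose singular set does not contain $x$, contradicting $x \in \NK(\theta)$.

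The main obstacle is the extension step. In the K\"ahler setting of Collins--Tosatti the extension relies on global $\ddbar$-exactness of differences of forms in the same class together with intersection-number estimates; here $\theta$ need not be closed, so cohomology is unavailable, and the intersection numbers must be replaced by the volume proxies $\widehat v_-(V,\theta|_V)$ whose behavior under restriction and modification is subtle. I would handle this by combining the mass-concentration machinery used in the proof of Theorem~\ref{thm: guedjlu} (Chiose--Popovici, via Lamari's Hahn--Banach criterion) with a gluing construction modeled on Demailly's regularization (Theorem~\ref{thm: regularization}): a Hermitian current on a neighborhood of $V$ inside $X$ produced from the current on $\widetilde V$ is glued to $T_0$ via a regularized maximum, and Theorem~\ref{thm: regularization} is invoked to absorb the loss of positivity. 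The resulting current has almost analytic singularities, belongs to $[\theta]$, is Hermitian on $X$, and is smooth near $x$, which yields the desired contradiction. Verifying that the $\widehat v_-$ hypotheses persist along the induction (via Lemma~\ref{lem: GL4.4} and Theorem~\ref{thm: invariant-H}) is the remaining technical point.
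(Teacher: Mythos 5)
Your easy inclusion $\Null(\theta)\subset\NK(\theta)$ is correct and is exactly the paper's argument. The gap is in the hard inclusion: your contradiction requires producing, under the assumption that $\theta|_V$ is big, a Hermitian current in the relevant class that is smooth at the chosen point (a point over $x$ on $\widetilde V$, and then at $x$ itself after extension). Neither tool you invoke can deliver this. Since you did \emph{not} assume $x\notin\Null(\theta)$, the point $x$ may perfectly well lie on a lower-dimensional subvariety $W\subsetneq V$ with $\theta|_W$ not big; by your own easy inclusion, every Hermitian current with almost analytic singularities on $\widetilde V$ in $[\pi^*\theta|_{\widetilde V}]$ is then singular along the strict transform of $W$, so no current avoiding the point over $x$ exists, and the inductive hypothesis on $\widetilde V$ cannot manufacture one. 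Moreover, the extension theorem (Theorem~\ref{thm: extension-current}) only guarantees that the extended current is smooth near the \emph{generic} point of $V$: in its proof the gluing with the global current $S$ reintroduces singularities along the other components $Y_i$ and the exceptional divisors, so smoothness at a prescribed $x\in V$ is simply not available. The repair is what the paper does: either assume from the start $x\in\NK(\theta)\setminus\Null(\theta)$ (so that $\theta|_V$ is automatically big, $V$ being a positive-dimensional component of $\NK(\theta)$ through $x$ by Lemma~\ref{lem: isolated}), or keep your formulation but derive the contradiction at the generic point --- the extended current $\widetilde T\in[\theta]$ satisfies $V\nsubseteq\Z(\widetilde T)$, contradicting $V\subset\NK(\theta)\subset\Z(\widetilde T)$, where $\Z(\widetilde T)\supset\NK(\theta)$ comes from the definition of the non-Hermitian locus. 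No smoothness at $x$ is needed anywhere.

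Two further points. The induction on dimension is superfluous: all that is needed on $\widetilde V$ is a single Hermitian current with almost analytic singularities in the big class $[\pi^*\theta|_{\widetilde V}]$, and Demailly's regularization (Theorem~\ref{thm: regularization}) provides it directly; the full equality $\NK=\Null$ on $\widetilde V$ plays no role, and the paper's proof is not inductive. Likewise, your proposed detour through the mass-concentration machinery of Theorem~\ref{thm: guedjlu} and the volume proxies $\widehat v_-(V,\theta|_V)$ is a misdiagnosis of where closedness enters: in Theorem~\ref{thm: main} bigness of $\theta|_V$ is the hypothesis being contradicted, not something to be detected numerically, and the Collins--Tosatti extension argument (Hironaka principalization, Richberg gluing of potentials with the same singularity type, reduction of Lelong numbers using only the nefness of $\theta$) is purely potential-theoretic and never uses intersection numbers or closedness of $\theta$. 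The volume functionals $\widehat v_-$ only become necessary in Theorem~\ref{thm: null=nb} (Theorem A), where one must certify bigness of restrictions without cohomology; verifying their behavior under modifications is irrelevant to the present statement.
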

% \begin{proof}
%     We show that $[\theta]$ contains a K\"ahler current, so $NK(\theta)\neq \varnothing$.
% \end{proof}

% The following extension theorem is crucial in the proof of the main theorem. 

The key ingredient in the proof of the theorem above is an extension-type result for a Hermitian current with almost analytic singularities in nef and big $\ddbar$-class on complex subvarieties, generalizing \cite[Theorem 3.2]{collins2015kahler}. 
% We mimic the main technical idea of~\cite[Theorem 3.2]{collins2015kahler} in which they used Hironaka's resolutions of singularities and  Richberg's gluing technique in order to glue psh functions with analytic singularities.  
\begin{theorem}\label{thm: extension-current}
    Let $(X,\omega_X)$ be a compact complex manifold of dimension $n$. Let $\theta$ be a smooth real (1,1) form on $X$, whose $\ddbar$-class is nef and big. Let $E=V\cup\cup_iY_i$ be an analytic closed subvariety of $X$ of pure dimension $k$, where $V$ and $Y_i$ are irreducible components of $E$, and $V$ a compact positive-dimensional complex submanifold of $X$. 
     Let $S=\theta+\dc F$ be a Hermitian current with almost analytic singularities precisely along $E$.
    Let $T=\theta|_{V}+\dc\f $ be a Hermitian current in the class $[\theta|_V]$ on $V$ with almost analytic singularities. Then there exists a Hermitian current with almost analytic singularities $\widetilde{T}$ on $X$ in the $\ddbar$-class $[\theta]$ such that 
    % $\widetilde{T}|_{V}=T$. In particular, 
    $\widetilde{T}$ is smooth in a neighborhood of the generic point of $V$.
    % If $\theta$ is merely big and $n=2$, then the result still holds.
\end{theorem}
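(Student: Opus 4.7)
The plan is to adapt the proof of Theorem~3.2 in Collins--Tosatti~\cite{collins2015kahler}, whose core constructions (Hironaka resolutions and Richberg-type max-gluing of functions with almost analytic singularities) are entirely local in nature and do not exploit closedness of the reference form. The key ingredients available are: (i) the ambient Hermitian current $S$, singular exactly along $E$, which gives a well-behaved global comparison function $\psi$; (ii) Demailly's regularization (Theorem~\ref{thm: regularization}), which lets us perturb freely while controlling loss of positivity by $\varepsilon\omega_X$; (iii) nefness of $[\theta]$, which supplies smooth $\psi_\varepsilon$ with $\theta+\dc\psi_\varepsilon\geq -\varepsilon\omega_X$.

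First I would reduce to a log resolution $\pi:\widehat{X}\to X$, obtained as a finite composition of blow-ups with smooth centers, chosen so that $\pi^*\psi$ has the standard SNC presentation $c\sum_\ell a_\ell\log|g_\ell|^2 + h$, so that the total transform of $V$ contains a smooth irreducible component $\widetilde{V}$ birational to $V$, and so that $(\pi|_{\widetilde V})^*\varphi$ has SNC almost analytic singularities on $\widetilde{V}$. Then the problem becomes: extend the Hermitian current $(\pi|_{\widetilde V})^*T$ from $\widetilde V$ to a Hermitian current on $\widehat{X}$ in $[\pi^*\theta]$ with almost analytic singularities, smooth near the generic point of $\widetilde V$; the desired $\widetilde T$ on $X$ is then obtained by direct image, since $\pi$ is an isomorphism over the generic point of $V$.

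Second, I would build a local quasi-psh extension $\widetilde\varphi$ of $(\pi|_{\widetilde V})^*\varphi$ from $\widetilde V$ to a tubular neighborhood $U\subset\widehat X$ of $\widetilde V$. In coordinate patches adapted to $\widetilde V$ and to the SNC divisor encoding the singularities of $\varphi$, one extends the local defining equations $g_\ell$ to holomorphic functions $G_\ell$ on $U$ (using the SNC structure and a partition of unity to patch across charts), yielding a local expression $\widetilde\varphi=c\sum_\ell a_\ell\log|G_\ell|^2+\widetilde h$. After adding a large multiple of $|z'|^2$ in the normal direction and paying by a small loss $\varepsilon\omega_{\widehat X}$ absorbed using nefness, we obtain $\pi^*\theta+\dc\widetilde\varphi\geq\tfrac12\omega_{\widehat X}$ on a possibly smaller neighborhood, with $\widetilde\varphi$ smooth near the generic point of $\widetilde V$.

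Third, I would glue $\widetilde\varphi$ with the globally defined $\pi^*\psi$ by Richberg's regularized maximum. Because $\pi^*\psi\equiv-\infty$ along $\widetilde V$ (since $V\subset E$) while $\widetilde\varphi$ is locally bounded near the generic points of $\widetilde V$, one can fix a large constant $M>0$ and a cut-off annulus around $\widetilde V$ so that $\widetilde\varphi>\pi^*\psi-M$ on an inner neighborhood and $\widetilde\varphi<\pi^*\psi-M$ on the outer boundary of $U$; a smooth regularized max then produces a global quasi-psh function $\Phi$ on $\widehat X$ whose associated current belongs to $[\pi^*\theta]$, is Hermitian, has almost analytic singularities (since both inputs do, and the max preserves this class), and coincides with $\widetilde\varphi$ on a neighborhood of the generic point of $\widetilde V$, hence is smooth there.

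The main technical obstacle is the quantitative comparison ensuring that the two inequalities $\widetilde\varphi\gtrless\pi^*\psi-M$ hold on the correct regions: one must use that $\psi$ has genuine logarithmic poles along the components of $E$ containing $V$, while $\widetilde\varphi$ has poles only along the strict transforms of the singularities of $\varphi|_V$ and the fiber direction can be controlled. Once these comparisons are in place, pushing $\Phi$ down by $\pi_*$ yields the required $\widetilde T$ on $X$; the fact that $\theta$ is not closed never enters, as all steps above concern only quasi-psh functions and the operators $\dc$ on local potentials.
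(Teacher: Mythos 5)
Your Steps 1--2 follow the paper's route (principalization of the singularities of $T$, local extensions of the potential on charts adapted to $\widetilde V$ and the SNC divisor, Richberg-type gluing), though note one local slip: you cannot extend the generators $g_\ell$ to holomorphic $G_\ell$ "using a partition of unity" --- patching by a partition of unity destroys holomorphy. The paper avoids this by extending the \emph{potential} chart-by-chart, $\f_j(z',z'')=(\f\circ\pi)(0,z'')+A|z'|^2+\delta\log|s_D|^2_{h_D}$, and then gluing the quasi-psh functions $\f_j$ via the Richberg lemma after checking that $\f_i-\f_j$ is continuous on overlaps because all $\f_j$ have the same singularity type.

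The genuine gap is in your Step 3, the passage from the tubular neighborhood to all of $\widehat X$. You propose to glue $\widetilde\f$ with $\pi^*\psi-M$ by a regularized maximum, requiring $\widetilde\f<\pi^*\psi-M$ on the outer boundary of the tube $U$. This inequality cannot hold, for any $M$, at points of $\partial U$ lying on $\pi^{-1}(E)$: there $\pi^*\psi$ has genuine logarithmic poles (along the strict transforms of the other components $Y_i$, along the continuation of the divisor $D$ beyond the tube, and along the relevant exceptional divisors), so $\pi^*\psi-M=-\infty$ while $\widetilde\f$ is finite, or has strictly smaller pole coefficients, there. Subtracting a constant never changes Lelong numbers, so no choice of $M$ and no annulus makes the outer competitor dominate near $\partial U$, and the max does not glue to a globally defined function. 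This is exactly what the paper's Step 3 ("reducing Lelong numbers") is designed to fix, and it is where nefness is really used: (i) one first adds a small multiple of a quasi-psh function singular along $\cup_i\widetilde Y_i$ to the inner function $\f'$, so that the inner function $\f''$ also has poles along those components; (ii) one replaces the outer competitor not by $\pi^*\psi-M$ but by the convex combination $\lambda\widetilde\psi+(1-\lambda)\rho_\lambda+B$, where $\rho_\lambda$ is a smooth function with $\pi^*\theta+\dc\rho_\lambda\geq-\lambda\widetilde\omega$ supplied by nefness. Taking $\lambda$ small scales the Lelong numbers of the outer function below those of $\f''$ along every component of $(\cup_\ell D_\ell\cup\cup_i\widetilde Y_i)\cap\partial W$ (checked via a principalization comparing log-pole coefficients), so the outer function genuinely dominates near the boundary and the maximum defines a global Hermitian current with almost analytic singularities, equal to $\f''$ near the generic point of $\widetilde V$. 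Your use of nefness only to absorb an $\varepsilon$-loss in Step 2 misses this mechanism, and without it the gluing step fails as written.
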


When $X$ belongs to the Fujiki class $\mathcal{C}$ and $\theta$ is closed, the result was proved by Collins and Tosatti~\cite[Theorem 3.2]{collins2015kahler}, and later simplified by Darvas et al.~\cite[Theorem 6.1]{darvas2023transcendental}. We adapt their arguments to our setting to prove the corresponding result.
 \begin{proof}
     [Proof of Theorem~\ref{thm: extension-current}] 
    We divide the proof into several steps.
    % , which we briefly organize as follows. 
%     First, we lift our problem to a modification $\widetilde{X}$ of $X$ and $\widetilde{V}$ of $V$.
%     % ; precisely, we use the resolution of singularities, allowing us to reduce the singularities of $T$ to a simple normal crossing divisor on the strict transform $\widetilde{V}$
%     Next, we construct local extensions on a finite covering of $\widetilde V$ in $\widetilde X$.
%     We then use the technique of Richberg to glue the local potentials to construct a local extension of Hermitian current in a neighborhood of $\widetilde{V}$. The crucial point is that local potentials have the same singularity type on any non-empty intersection. Finally, after possibly reducing the Lelong
% numbers of the global Hermitian current $S$,
%     we can glue the Hermitian current on the neighborhood to $S$ to obtain the desired one.

\smallskip
 \textbf{Step 1:} {\it Lifting to the desingularization.}
Let $\mathcal{I}_T\subset\mathcal{O}_V$ be the coherent sheaf of ideals defining the singularities of $T$ on $V$. It follows from Hironaka's theorem that we can find a principlization of the sheaf of ideals $\mathcal{I}_T$ (cf.~\cite{hironaka77-actavietnam,wlodarczyk09-Resolution}), that is, a finite sequence
\[  p': V'=V_r\xrightarrow{p_r}V_{r-1}\rightarrow \cdots\rightarrow V_1\xrightarrow{p_1}V_0=V\]
of blow-ups $p_i$ with smooth centers $C_{i-1}\subset V_{i-1}$. 
Next, regarding the first center $C_0$ as a submanifold of $X$, we blow it up inside $X$ and iterate $r$ times to obtain a sequence 
\[ \pi:\widetilde{X}=X_r\xrightarrow{\pi_r}X_{r-1}\xrightarrow{\pi_{r-1}}\cdots \xrightarrow{\pi_2}X_1\xrightarrow{\pi_1}X_0=X\]
of blow-ups $\pi_i:X_i\to X_{i-1}$ with smooth centers $C_{i-1}\subset X_{i-1}$. 
% For each blow-up $\pi_i$, we have that the strict transform $\widetilde{V}_i$ of $V_{i-1}$ is smooth and isomorphic to $V_i$ and the restriction of $\pi_i$ to $\widetilde{V}_{i}$ composed with this isomorphism is exactly $p_i$. 
We observe that if $\widetilde V$ is the strict transform of $V$ under $\pi$, then $\widetilde V\to V'$ 
is a proper finite bimeromorphic morphism, so it is indeed an isomorphism since $V'$ is smooth (due to Zariski's main theorem).
We also see that the exceptional divisor $E_i$ of $\pi_i$ has simple normal crossings with $\widetilde{V}_i$ and $E_i\cap\widetilde{V}_i$ coincides with the exceptional divisor of $p_i$. Finally, on $\widetilde X$, we get a divisor $D=\sum D_\ell$ which has simple normal crossings with the strict transform $\widetilde{V}$ of $V$ via birational morphism $\pi$ and
\[(\pi|_{\widetilde V})^*\mathcal{I}_T=\mathcal{O}_{\widetilde{V}} \left(-\sum_\ell a_\ell D_\ell|_{\widetilde V} \right),\quad a_\ell>0. \]
 Note that since the analytic subvariety cut out by $\mathcal{I}_T$ has codimension at least 2 in $X$, it follows that all divisors $D_\ell$ are $\pi$-exceptional divisors. Let $\widetilde{Y}_i$ be the strict transform of $Y_i$ in $\widetilde{X}$. 

% By assumptions, we can find a current $T$ and $\varepsilon>0$ such that
% \[T=\theta|_V+\dc\f\geq 3\varepsilon\omega|_V\]
% in the sense of currents on $V$, where $\omega$ denotes a Hermitian metric on $X$. 
% Lifting to $\widetilde{X}$ we obtain
% \[\pi^*\theta+\dc(\f\circ\pi)\geq 3\varepsilon\pi^*\omega|_{\widetilde{V}}. \]
It follows from \cite[Lemma 3.5]{demailly2004numerical} that there is a small $\delta>0$ such that
\[\pi^*\omega_X+\delta\dc\log|s_D|^2_{h_D}\geq \widetilde\omega \]
for some Hermitian metric $\widetilde{\omega}$ on $\widetilde{X}$, where $s_D$ is a holomorphic section defining $D=\sum_\ell  D_\ell$ and $h_D$ is a smooth Hermitian metric on the line bundle $\mathcal{O}_{\widetilde X}(D)$.
Thanks to~\cite[Lemma 2.1, Theorem 3.2]{demailly2004numerical}, there exists a globally defined $C_0\widetilde{\omega}$-psh function $\psi$ on $\widetilde{X}$ for some $C_0>0$, with almost analytic singularities along $\cup_i\widetilde{Y}_i$. 

By assumption, we assume $T=\theta|_V+\dc\f\geq 4\varepsilon_0\omega_X|_V$ for some $\varepsilon_0>0$.
If we set $\widetilde{\f}\coloneqq\f\circ \pi+\frac{\varepsilon_0}{C_0}\psi+\delta\log|s_D|^2_{h_D}$, then $\widetilde\f$ has analytic singularities along $\cup_\ell D_\ell\cup_i\widetilde Y_i\cup \widetilde V$ and
\[ \pi^*\theta|_{\widetilde V}+\dc\widetilde\varphi\geq 3\varepsilon_0\widetilde{\omega}|_{\widetilde{V}}.\]
Similarly, the function $\widetilde F\coloneqq F+\delta\log|s_D|^2_{h_D}$ has analytic singularities along $\cup_\ell D_\ell\cup_i\widetilde Y_i\cup \widetilde V$. Up to shrinking $\varepsilon_0>0$, we assume that \[\pi^*\theta+\dc\widetilde F\geq3\varepsilon_0\widetilde\omega. \]
% In the next step, we will explain how to extend $\widetilde{\f}$ to a small neighborhood of $\widetilde{V}$.
% As follows from~\cite[Lemma 2.1, Theorem 3.2]{demailly2004numerical}, there exists a quasi-psh function $\widetilde F$ with almost analytic singularities such that $\widetilde F|_{\widetilde V}\equiv-\infty$. Up to shrinking $\varepsilon>0$, we can assume that $\pi^*\theta+\dc \widetilde F\geq3\varepsilon\widetilde\omega $ on $\widetilde X$.

     Since $\widetilde{V}$ is a closed submanifold of $\widetilde{X}$ and $\sum_\ell D_\ell|_{\widetilde{V}}$ has simple normal crossing support, we can find a finite open covering $\{W_j\}_{j=1}^N$ of $\widetilde{V}$ such that on each local chart $W_j$ there exist local coordinates $z_j=(z_j^1,\ldots z_j^n)$ with $\widetilde V\cap W_j=\{z_j^1=\cdots=z_j^{n-k}=0 \}$, where $k=\dim \widetilde{V}$, and with $\cup_\ell D_\ell\cap W_j=\{z^{i_1}_j\cdots z^{i_p}_j=0\}$, $n-k+1\leq i_1,\ldots,i_p\leq n$. On $W_j$, the functions $\widetilde\f$ and
     $\widetilde F$ can be written as
     \[\widetilde\f=\frac{\alpha}{2}\log\sum_{p=1}^{n-k}|z_j^p|^{2a_p}+\mathcal{C}^\infty ,\quad 
     \widetilde F=\frac{\beta}{2}\log\sum_{p=1}^{n-k}|z_j^p|^{2b_p}+\mathcal{C}^\infty. \]
Since $\pi^*\theta$ is nef, for any small $\lambda\in (0,1)$ there exists a smooth function $\rho_\lambda$ such that
\[\pi^*\theta+\dc\rho_\lambda\geq -3\lambda\varepsilon_0\widetilde\omega \]
holds on $\widetilde X$, and we can normalize $\rho_\lambda$ so that it is strictly positive. Then we have \[\pi^*\theta+\dc(\lambda\widetilde F+(1-\lambda)\rho_\lambda)\geq 3\varepsilon_0\lambda^2\widetilde\omega. \]
We choose $\lambda$ so small that $\lambda \beta b_p<\alpha a_p$ for any $p=\overline{1,n-k}$.  We stress that this is the only place where the nefness of $\theta$ is used.

 Set $F'\coloneqq\lambda\widetilde F+(1-\lambda)\rho_\lambda$. We take $0<\varepsilon<\lambda^2\varepsilon_0$ so that
\[ \pi^*\theta|_{\widetilde V}+\dc\widetilde\f\geq 3\varepsilon\widetilde\omega|_{\widetilde V},\;\text{and}\;\pi^*\theta+\dc F'\geq 3\varepsilon\widetilde\omega.\]
 
     \smallskip
 \textbf{Step 2:} {\it Gluing technique of Richberg.} In this step, we follow the trick of~\cite{darvas2023transcendental}.
     
    We denote by $z'_j=(z^1_j,\ldots,z^{n-k}_j)$ and $z''_j=(z^{n-k+1}_j,\ldots,z^n_j)$ on $W_j$.  We define a function $\f_j$ on $W_j$ by
\[ \f_j(z_j',z_j'')=\log\Big[e^{\widetilde\varphi(0,z_j'')}+e^{ F'(z',z'')} \Big]+A\cdot \textrm{dist}((z',z''),\widetilde{V})^2, \]
where $\textrm{dist}$ denotes the Riemannian distance with respect to $\widetilde\omega$ and $A>0$.
Up to shrinking the $W_j$'s if necessary, we can choose $A>0$ so large that $\widetilde\varphi(0,z''_j)+A\cdot \textrm{dist}((z',z''),V)^2$
  and $F'(z_j,z''_j)+A\cdot \textrm{dist}((z',z''),V)^2$ are $(\pi^*\theta-2\varepsilon\widetilde\omega)$-psh. Hence, 
 \[\pi^*\theta+\dc\f_j\geq 2\varepsilon\widetilde{\omega}\;\text{on}\; W_j \]
 holds for all $j$; see, e.g.,~\cite[I.4.16]{demaillycomplex}.
By construction, ${\varphi_j}$ is continuous on $W_j\setminus \widetilde V$ and $\f_j$ has almost analytic singularities on $W_j$. 
% Moreover, $e^{\f_j}$ is smooth on $W_j$, it follows from~\cite[Lemma 6.2]{darvas2023transcendental} that for any point $x\in W_j\cap W_\ell\cap\widetilde V$ and $\eta>0$ there exists an open neighborhood of $x$ such that $\f_j>\f_\ell-\eta$. 
\begin{claim}
    For any point $x\in W_j\cap W_\ell\cap\widetilde V$ and $\eta>0$ there exists an open neighborhood of $x$ such that $\f_j>\f_\ell-\eta$.
\end{claim}
\begin{proof}
    Since we have rescaled $\widetilde F$ to obtain $F'$, the H\"older exponent of $\widetilde\f$ is larger than that of $F'$. The claim follows the same arguments as in~\cite[Lemma 6.2]{darvas2023transcendental}.
\end{proof}
Fix slightly smaller open sets $W_j'\Subset U'_j\Subset U_j \Subset W_j$ such that $\widetilde V\subset \cup_jW_j'$.
Let $0\leq \chi_j\in\mathcal{C}^\infty(W_j)$ be a cutoff function such that $\textrm{supp}(\chi_j)\Subset W_j'^c$ and $\chi_j>1$ in a neighborhood of $\overline{U_j\backslash U'_j}$. We choose $\eta>0$ so small that $\dc \eta\chi_j\leq \varepsilon\widetilde\omega$. The function $\widetilde\f_j\coloneqq\f_j-\eta\chi_j$ satisfies $\pi^*\theta+\dc\widetilde\f_j\geq \varepsilon\widetilde\omega$ on $W_j$.
We set for $x\in \cup_jU_j$ 
\begin{equation*}
    \f'(x)\coloneqq\max_{1\leq\ell\leq N}\widetilde\f_\ell(x).
\end{equation*}
For each $x\in W_j'$, let $I_x$ denote the set of indices $\ell$ such that $x\in\overline{U_j\backslash U'_j}$. Fixing $x\in W_j'$, there exists an open neighborhood $B_x\subset W_j'$ such that for any $y\in B_x$ we have $I_y\subset I_x$. We can further shrink $B_x$ further such that $\chi_\ell>1$ on $B_x$ for all $\ell\in I_x$. Hence, on $ B_x$ \begin{equation}\label{eq: thm34} \widetilde\f_j=\f_j-\eta\chi_j=\f_j>\f_\ell-\eta>\f_\ell-\eta\chi_\ell,\;\; \forall\,\ell\in I_x.\end{equation}
Let $J_x\subset \{1,\ldots,N\}\backslash I_x$ denote the set of indices $\ell$ such that $U_\ell\cap B_x\neq \varnothing$. We observe that $B_x\subset U_\ell$ for all $\ell\in J_x$. Indeed, if this were not the case, then there would be $y\in B_x$ such that $y\in \partial U_\ell'\subset \overline{U_\ell\backslash U'_\ell}$. So $\ell\in I_y\subset I_x$, which is a contradiction.

Let $(B_{x_i})_{i=1}^r$ be a finite cover of $\widetilde V$ with $x_i\in W_{j_i}'\cap \widetilde V$. Set $W:=\cup_iB_{x_i}$.
As follows from~\eqref{eq: thm34}, we have on each $B_{x_i}$
\[\f'|_{B_{x_i}}=\max\bigg\{\widetilde\f_{j_i}|_{B_{x_i}}, \max_{\ell\in J_{x_i}}\widetilde \f_{\ell}|_{B_{x_i}} \bigg\}=\max\bigg\{\f_{j_i}|_{B_{x_i}}, \max_{\ell\in J_{x_i}}\widetilde \f_{\ell}|_{B_{x_i}} \bigg\}, \] 
 which satisfies $\pi^*\theta+\dc\f'\geq \varepsilon\widetilde\omega$ on $B_{x_i}$. Hence, $\pi^*\theta+\dc\f'\geq \varepsilon\widetilde\omega$ on $W$. Clearly, $\f'\geq\widetilde \f_j=\f_j$ on $W_j'$. Since $\widetilde\f_\ell=\f_\ell-\eta\chi_\ell\leq \f$ on $\widetilde V\cap U_j$ for all $\ell$ we have $\f'|_{\widetilde V}=\f$.  Up to shrinking $W$ slightly, we may assume that $\f'$ is defined on an open neighborhood $\widetilde W$ of $W$. 

\smallskip
     \textbf{Step 3:} {\it Completion of the proof.} 
     % From the previous step, there exist an open neighborhood $U$ of $\cup_\ell D_\ell \cup \cup_i\widetilde{Y}_i$ and $\lambda\in]0,1[$ so small such that $\lambda\widetilde \psi+(1-\lambda)\rho_\lambda>\f''$ on a neighborhood of $\partial W\cap U$. Since on $\partial W\backslash U$ we can find a constant $B>0$ so large that \[ \lambda\widetilde \psi+(1-\lambda)\rho_\lambda>\f''-B\]  hence $\lambda\widetilde \psi+(1-\lambda)\rho_\lambda>\f''-B$ in a neighborhood of $\partial W$.
   Let $C>0$ be such that $F'+C>\f'$ in a neighborhood of $\partial \widetilde W$. We can define a function $\Phi$ on $\widetilde X$ by
     \[\Phi=\begin{cases}
         \max(\f',F'+C)&\text{on}\; \widetilde W,\\
         F'+C& \text{on}\; \widetilde{X}\backslash \widetilde W.
     \end{cases} \]
     One easily sees that $ T'=\pi^*\theta+\dc{\Phi}\geq \varepsilon\widetilde{\omega}$ on $\widetilde X$. Observe that $\Phi=\f'$ in an open neighborhood of the generic point of $\widetilde V$ because $\widetilde{F}|_{\widetilde V}\equiv -\infty$. Therefore, $\Phi|_{\widetilde V}$ is smooth in an open neighborhood of the generic point of $\widetilde{V}$.
Pushing forward $T'$ on $X$, we obtain a Hermitian current $\widetilde T=\pi_* T'$ in the $\ddbar$-class $[\theta]$ which is an extension of $T$  such that the restriction $\widetilde{T}|_V$ is smooth in an open neighborhood of the generic point of ${V}$. The poof is complete.
 \end{proof}

\begin{proof}
    [Proof of Theorem~\ref{thm: main}] 
We first show that $\Null(\theta)\subset \NK(\theta)$.
{Let $x\in X$ and $V$ be an irreducible analytic subvariety of $X$ of positive dimension such that $\theta|_V$ is not big. Suppose by contradiction that $x$ does not belong to $\NK(\theta)$, so there is a Hermitian current $T\in [\theta]$ with almost analytic singularities and smooth in an open neighborhood of $x$. We write $T=\theta+\dc\f$ for some $\theta$-psh function $\f$. However, $\theta|_V+\dc\f|_V$ defines a Hermitian current with almost analytic singularities on $V$ because $\f|_V\not\equiv-\infty$, a contradiction.}

\smallskip
For the inverse implication $\Null(\theta)\supset \NK(\theta)$, 
suppose by contradiction that there exists a point $x\in \NK(\theta)\backslash \Null(\theta)$. Let $V$ be an irreducible component of the closed analytic set $\NK(\theta)$ containing $x$. Thanks to Lemma~\ref{lem: isolated}, the pure dimension of $V$ is strictly positive. 

Assume first that $V$ is smooth. {Since $\theta|_V$ is big}, we can find a Hermitian current $T=\theta|_V+\dc\f$ with almost analytic singularities (Theorem~\ref{thm: regularization}). We decompose
\[\NK(\theta)=V\cup\bigcup_{i\in I} Y_i,\] for disjoint irreducible components $V$, $Y_i$ of $\NK(\theta)$ which are all purely positive dimensional as follows from Lemma~\ref{lem: isolated}.   
We can thus apply Theorem~\ref{thm: extension-current} with $E=\NK(\theta)$, with the current $S=\theta+\dc \psi$ in Theorem~\ref{thm: boucksom317} to obtain a Hermitian current $\widetilde T=\theta+\dc\Phi$ in the $\ddbar$-class $[\theta]$ on $X$ such that $\widetilde T|_V=T$ and $T$ is smooth in a neighborhood of the generic point of $V$. In particular, we must have $V \nsubseteq \Z(\widetilde T)$, which is a contradiction since $V \subset \NK(\theta)\subset \Z(\widetilde T)$.

If $V$ is singular, we take an embedded desingularization of $V$, that is, $\pi:\widetilde X\to X$, given by a finite composition of blow-ups with smooth centers. The strict transform $\widetilde V$ is smooth and intersects the exceptional divisor with simple normal crossings. Since $\pi$ is bimeromorphic, $\pi^*\theta$ is nef and big on $\widetilde{X}$, has the same properties on $\widetilde V$.   Then there exists a Hermitian current $ S\in[\pi^*\theta]$ with almost analytic singularities along an analytic subset $\NK(\pi^*\theta)$. Thanks to~\cite[Proposition 2.5]{tosatti18-nakamaye}, we have $$\widetilde V\subset \NK(\pi^*\theta)=\textrm{Exc}(\pi)\cup \pi^{-1}(\NK(\theta)).$$
% , otherwise $S$ is smooth in a neighborhood of the generic point of $\widetilde V$, and then its push forward $\pi_* S$ is smooth near the generic point of $ V$, which contradicts $V\subset \NK(\theta)$.
We can apply Theorem~\ref{thm: extension-current} with $ S$ and a Hermitian current $T=\pi^*\theta|_{\widetilde V}+\dc\f$ to obtain a Hermitian current $\widetilde T=\pi^*\theta+\dc\widetilde\Phi$ in the $\ddbar$-class $[\pi^*\theta]$ such that $\widetilde T|_{\widetilde V}$ is smooth in a neighborhood of the generic point of $\widetilde V$. Hence, the direct image $\pi_*\widetilde T$ is smooth near the generic point of $V$, which is again a contradiction. 
\end{proof}
The following example tells us that the nef condition in Theorem~\ref{thm: main} is essential even in the algebraic version of Nakamaye's theorem; we thank Valentino Tosatti for pointing out this example.
\begin{example}
    Let $\pi_1 : X_1\to\mathbb{C}\mathbb{P}^2$ be a blow-up at one point, with exceptional divisor $F_1$. Let $\pi_2:X\to X_1$ be the blow-up of a point $y\in F_1$ with exceptional divisor $F_2$. The exceptional divisor $E$ of $\pi=\pi_2\circ \pi_1$ has two components $E=E_1+E_2$ where $E_1$ is the strict transform of $F_1$ under $\pi_2$ while $E_2=F_2$. We have $E_1^2=-2$, $E_2^2=-1$ and $E_1\cdot E_2=1$. We take $L=\pi^*H+E_1$ where $H $ is a hyperplane bundle on $\mathbb{C}\mathbb{P}^2$. It is clear that $L$ is big and that its positive part in the divisorial Zariski decomposition is $\pi^*H$, which is nef. Nakamaye's theorem yields
    \[ \mathrm{E_{nH}}(L)=\mathrm{E_{nH}}(\pi^*H)=E_1\cup E_2.\]
   However, since the restriction $L|_{E_2}$ is a line bundle on a (rational) curve with degree $E_1\cdot E_2=1$, it is ample. This implies $\NK(L)\neq\Null(L)$.
\end{example}

\subsection{Proof of Theorem~\ref{thmA}}
Under some additional assumptions on the manifold, we can determine the null locus as follows.

\begin{theorem}\label{thm: null=nb} Let $X$ be a $n$-dimensional compact complex manifold equipped with a Hermitian metric $\omega_X$. 
   Assume that $\omega_X$ satisfies the volume condition~\eqref{H}.
    Let $\theta$ be a nef (1,1) form.
    Then $\theta$ is big if and only if $\widehat v_-(X,\theta)>0$. Moreover 
   \begin{equation}\label{eq: null=nb}
      \NK(\theta)= \Null(\theta)=\bigcup_{\widehat v_-(V,\theta|_V)=0} V,
   \end{equation} 
     where the union is taken over all purely positive-dimensional irreducible analytic subvarieties of $V\subset X$.
\end{theorem}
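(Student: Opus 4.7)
The plan is to chain together three ingredients: the equivalence ``$\theta$ is big $\iff \widehat{v}_-(X,\theta)>0$'' (to be proved), Theorem~\ref{thm: main} (which supplies $\NK(\theta) = \Null(\theta)$ whenever $\theta$ is nef and big), and a restriction argument identifying $\Null(\theta)$ with the union of subvarieties on which $\widehat{v}_-$ vanishes. The volume condition~\eqref{H} enters twice: it furnishes $v_+(X,\omega_X)<\infty$ globally, so that Theorem~\ref{thm: guedjlu} applies on $X$, and it propagates to positive-dimensional closed submanifolds via~\cite[Theorem A]{angella2022plurisigned}, so that the same criterion applies on each $V$.

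First I would establish the equivalence ``$\theta$ big $\iff \widehat{v}_-(X,\theta)>0$''. The direction ``$\Leftarrow$'' is exactly Theorem~\ref{thm: guedjlu}. For the converse, suppose $\theta + \dc\phi \geq c\omega_X$ is a Hermitian current in $[\theta]$. For any $\varepsilon\in (0,1)$, the current $S_\varepsilon := (1-\varepsilon)(\theta+\dc\phi) + \varepsilon\omega_X$ lies in $[(1-\varepsilon)\theta + \varepsilon\omega_X]$ and satisfies $S_\varepsilon \geq \bigl((1-\varepsilon)c+\varepsilon\bigr)\omega_X$. Any smooth Hermitian representative $\Omega_\varepsilon$ of this class differs from $S_\varepsilon$ by $\dc$ of a quasi-psh function, and a mass-lower-bound argument in the spirit of Chiose--Popovici (analogous to the proof of Theorem~\ref{thm: guedjlu}, but replacing the assumption on $\widehat{v}_-$ by the direct positivity of $S_\varepsilon$) yields $\int_X \Omega_\varepsilon^n \geq c_0$ for some $c_0>0$ independent of $\varepsilon$, giving $\widehat{v}_-(X,\theta)\geq c_0>0$.

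Next I would deduce the chain of equalities. Since $\theta$ is nef, its restriction $\theta|_V$ to any irreducible closed subvariety $V$ remains nef (restrict the smooth $\varepsilon$-positive representatives). For smooth $V$, the metric $\omega_X|_V$ satisfies~\eqref{H} on $V$: the bound $v_+(V,\omega_X|_V)<\infty$ is~\cite[Theorem A]{angella2022plurisigned}, and $v_-(V',\omega_X|_{V'}) > 0$ for every positive-dimensional closed submanifold $V'\subseteq V$ is inherited from the property on $X$ since any such $V'$ is itself a closed submanifold of $X$. Applying the equivalence from the previous step on $V$, $\theta|_V$ is big iff $\widehat{v}_-(V,\theta|_V) > 0$, so by definition of the null locus
\[
\Null(\theta) = \bigcup_{\theta|_V\text{ not big}} V = \bigcup_{\widehat{v}_-(V,\theta|_V)=0} V.
\]
For singular $V$ one passes to an embedded resolution $\pi\colon \tilde V\to V$; bigness of $\theta|_V$ is equivalent to bigness of $\pi^*\theta|_{\tilde V}$, and the bimeromorphic invariance Theorem~\ref{thm: invariant-H} guarantees that~\eqref{H} persists on $\tilde V$ for a suitable Hermitian metric. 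Finally, combining the implication $\widehat{v}_-(X,\theta)>0 \Rightarrow \theta$ big with Theorem~\ref{thm: main} gives $\NK(\theta) = \Null(\theta)$, closing the chain.

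The principal obstacle is the forward implication ``$\theta$ big $\Rightarrow \widehat{v}_-(X,\theta) > 0$''. Because Monge--Amp\`ere volumes are not constant inside a $\dc$-class in the non-K\"ahler setting, one cannot compare $\int_X \Omega_\varepsilon^n$ with $\int_X S_\varepsilon^n$ directly; one must instead exploit both nefness (to guarantee smooth Hermitian representatives exist for every $\varepsilon>0$) and~\eqref{H} (to prevent pathological mass collapse), presumably through a Lamari-style duality or an integration-by-parts argument mirroring the proof of Theorem~\ref{thm: guedjlu}. A secondary technical point is the clean treatment of singular subvarieties via resolutions, where one must check that the relevant positivity and volume quantities transfer correctly.
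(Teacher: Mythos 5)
Your overall skeleton --- restrict a volume criterion to each subvariety using \eqref{H} (propagated by \cite[Theorem A]{angella2022plurisigned}) and then invoke Theorem~\ref{thm: main} --- is close in spirit to the paper's proof, which establishes $\NK(\theta)=\bigcup_{\widehat v_-(V,\theta|_V)=0}V$ directly and quotes \cite[Theorem 4.6]{guedj2022quasi2} for the equivalence on $X$. However, the step you yourself flag as the ``principal obstacle'' is a genuine gap, and the tool you propose for it runs in the wrong direction. The Chiose--Popovici/Lamari duality (the proof of Theorem~\ref{thm: guedjlu}) produces a Hermitian current \emph{from} a positive lower volume bound; it does not give the implication ``$[\theta|_V]$ contains a Hermitian current $\Rightarrow\widehat v_-(V,\theta|_V)>0$'', and your sketch never shows where the $v_-$-part of \eqref{H} enters --- yet without it the implication is not expected to hold. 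The correct mechanism is a mass comparison through envelopes: given $\psi$ with $\theta|_V+\dc\psi\geq\omega_V$ and any smooth $u$ with $\theta|_V+\varepsilon\omega_V+\dc u>0$, set $v:=P_{\omega_V}(u-\psi)$; then $(\omega_V+\dc v)^{\dim V}$ is concentrated on the contact set $\{v=u-\psi\}\subset\{\psi>-\infty\}$, and the maximum principle (Lemma~\ref{lem: max-princ}) gives
\[
\int_V(\omega_V+\dc v)^{\dim V}\leq\int_V(\theta|_V+\varepsilon\omega_V+\dc u)^{\dim V},
\]
hence $v_-(V,\theta|_V+\varepsilon\omega_V)\geq v_-(V,\omega_V)>0$ by \eqref{H}. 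This is exactly how the paper proves the inclusion $\bigcup_{\widehat v_-(V,\theta|_V)=0}V\subset\NK(\theta)$, and it is also the substance of the cited equivalence on $X$. Without this argument, both the ``only if'' half of your first assertion and the inclusion $\bigcup_{\widehat v_-(V,\theta|_V)=0}V\subset\Null(\theta)$ remain unproved.

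A secondary gap concerns singular $V$. The set $N$ on the right-hand side of \eqref{eq: null=nb} is defined through $\widehat v_-(V,\theta|_V)$ on $V$ itself (integration over $V_{\rm reg}$). Passing to an embedded resolution $\pi\colon\widetilde V\to V$, you transfer bigness and \eqref{H}, but you never compare $\widehat v_-(V,\theta|_V)$ with the corresponding quantity on $\widetilde V$, so the equivalence you obtain upstairs does not yet decide whether $V$ belongs to $N$. The paper circumvents this by running the envelope argument directly on the strict transform with the modified potential $(\f\circ\pi)|_{V'}+\delta\log|s|^2_h$, which is Hermitian there, and deriving the contradiction with $\widehat v_-(V,\theta|_V)=0$ by pulling back candidate metrics from $V$; you would either need an analogous argument or a bimeromorphic comparison of the volume functionals, neither of which is supplied in your proposal.
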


\begin{proof} We denote by $N$ the set on the right-hand side of \eqref{eq: null=nb}.
    The first statement is shown in~\cite[Theorem 4.6]{guedj2022quasi2}. It suffices to show that $\NK(\theta)=N$.
    
    We first show that $N\subset \NK(\theta) $. Let $x\in X$ and $V$ an irreducible analytic subvariety of $X$ of dimension $k>0$ such that $\widehat v_-(V,\theta|_V)=0$. Suppose by contradiction that $x$ does not belong to $\NK(\theta)$, so there is a Hermitian current $T\in [\theta]$ with almost analytic singularities and smooth in an open neighborhood of $x$. We write $T=\theta+\dc\f$ for some $\theta$-psh function $\f$. 
Set $\psi=\f|_V$. 
% In particular, $\psi$ is smooth in a Zariski open set $U\subset V$, and the Monge-Ampere product $(\theta+\dc\psi)^k$ is well-defined on $V$.  
% Since $T$ is a smooth metric on $U$ we have
% \[ 0< \int_U(\theta+\dc\f)^k\leq \int_V(\theta+\dc\f)^k=\int_V\theta^k=0,\]
% which is a contradiction. 

Suppose first that $V$ is smooth.
 Assume that $\theta|_V+\dc\psi\geq \omega_V$ for some Hermitian metric $\omega_{V}$, and $\sup_V\psi=0$. Fix $\varepsilon>0$ and $u\in\PSH(V,\theta|_V+\varepsilon\omega_V)\cap \mathcal{C}^\infty(V)$. Set $v:=P_{\omega_V}(u-\psi)$.  We note that $u-\psi$ is merely well defined outside a pluripolar set and $v+\psi\leq u$ almost everywhere, hence everywhere.
 It follows from~\cite[Theorem 9.17]{guedj2017degenerate} that $v$ is bounded from above. By the classical pluipotential theory (see, e.g.,~\cite[Theorem B]{guedj2022quasi2}), $v$ is a bounded $\omega$-psh function and $(\omega_V+\dc v)^k$ is supported on the contact set $$D=\{v=u-\psi\}\subset \{\psi>-\infty\},$$ where the product $(\cdot)^k$ is understood as the Monge--Amp\`ere product in the sense of Bedford--Taylor; cf.~\cite[Theorem 2.3]{guedj2022quasi2}. By the maximum principle (Lemma~\ref{lem: max-princ}), we have
\[\mathbf{1}_{D\cap\{\psi>-\infty\}}(\theta|_V+\varepsilon\omega_V+\dc(v+\psi))^n\leq\mathbf{1}_{D\cap\{\psi>-\infty\}}(\theta|_V+\varepsilon\omega_V+\dc u)^n.  \]
Hence,
\[\int_V(\omega_V+\dc v)^n= \int_D(\omega_V+\dc v)^n\leq \int_V(\theta|_V+\varepsilon\omega_V+\dc u)^n \]
since $D\cap\{\psi>-\infty\}=D$. Therefore, $$0<v_-(V,\omega_V)\leq v_-(V,\theta|_V+\varepsilon\omega_V) $$ by assumption. Taking the infimum for $\varepsilon> 0$ gives a contradiction. 

If $V$ is singular, then we consider $\pi:X'\to X$ an embedded resolution of singularities obtained by a finite composition of blow-ups with smooth centers and assume that $V'$ is the strict transform of $V$.  We set $\psi'= (\f\circ \pi)|_{V'}+\delta\log|s|^2_h$ where $s$ is a section defining the exceptional divisor $\textrm{Exc}(\pi)$, and $\delta>0$ small enough. Then $\pi^*\theta|_{V'}+\dc\psi'\geq \omega_{V'}$ 
for some Hermitian metric $\omega_{V'}$. We apply the argument above with $\psi'$ and $\omega_{V'}$ to obtain
\[0<v_-(V',\omega_{V'})\leq v_-(V',\pi^*\theta+\varepsilon\omega_{V'}), \]
which gives a contradiction.

Conversely,  suppose by contradiction that there exists a point $x\in \NK(\theta)\backslash N$. Let $V$ be an irreducible component of the closed analytic set $\NK(\theta)$ containing $x$. Thanks to Lemma~\ref{lem: isolated}, we have that the pure dimension of $V$ is strictly positive, denoted by $k$. 

Assume first that $V$ is smooth. We see that $\widehat v_-(V,\theta|_V)>0$ because $x\notin N$. Then, by the first statement, we can find a Hermitian current $T=\theta|_V+\dc\f$ with almost analytic singularities (thanks to Demailly's regularization theorem). We decompose
\[\NK(\theta)=V\cup\bigcup_i Y_i,\] for disjoint irreducible components of $\NK(\theta)$, which are all purely positive dimensional, due to Lemma~\ref{lem: isolated}.   
We apply Theorem~\ref{thm: extension-current} with $E=\NK(\theta)$, with the current $S=\theta+\dc \psi$ in Theorem~\ref{thm: boucksom317} to obtain a Hermitian current $\widetilde T=\theta+\dc\Phi$ in the $\ddbar$-class $[\theta]$ on $X$ satisfying that $\widetilde T|_V=T$ and $T$ is smooth in a neighborhood of the generic point of $V$. In particular, we must have $V \nsubseteq \Z(\widetilde T)$, which is a contradiction because  $$V \subset \NK(\theta)\subset \Z(\widetilde T).$$
If $V$ is not smooth, we argue the same as in the proof of Theorem~\ref{thm: main} to obtain again a contradiction; see also~\cite[page 1180]{collins2015kahler}.
\end{proof}
We end this section with some examples of manifolds $X$ equipped with a Hermitian metric $\omega_X$, which satisfies the volume assumption~\eqref{H}.
 \begin{example}
 Obviously, if $X$ is K\"ahler or belongs to the Fujiki class $\mathcal{C}$, then it always admits a metric $\omega_X$, which satisfies the condition~\eqref{H}.
      
       When $\dim_\mathbb{C}X=2$ or 3 and $\omega_X$ is pluriclosed, i.e., $\dc\omega_X=0$ , it has been shown in~\cite[Theorem 3.3]{angella2022plurisigned} that the condition~\eqref{H}  is satisfied.

        If $\omega_X$ satisfies the curvature condition given by Guan--Li, namely,
     \[ \dc\omega_X=0,\; {\rm d}\omega_X\wedge {\rm d^c}\omega_X=0,\]
     which is equivalent to $\dc\omega_X^k=0$ for $k=1,\dots,n-1$, then we have 
     \[ \int_V(\omega_X+\dc\f)^{\dim V}=\int_V\omega_X^{\dim V},\,\forall\,\f\in\PSH(V,\omega_X|_V)\cap\mathcal{C}^\infty(X),\]
     for any compact submanifold (without boundary) of $X$. 
     We can precisely choose $X$ to be the product of a complex surface and a K\"ahler manifold. Namely, $X=M\times N$ where $M$ is a complex surface and $N$ is K\"ahler, and $\omega_X=p_1^*\omega_M+p_2^*\omega_N$ where $\omega_M$ is a $\dc$-closed metric (it always exists thanks to Gauduchon's theorem) and $\omega_N$ is K\"ahler metric. 
    \end{example}
\begin{example}
It was shown in~\cite[Theorem 3.7]{guedj2022quasi2} that the condition~\eqref{H} is bimeromorphic invariant. Therefore,
    we can find various compact complex manifolds with such conditions. For example, we assume that a compact complex manifold $X$ admits a Hermitian metric $\omega_X$ satisfying condition~\eqref{H} and $\pi:X'\to X$ a finite composition of blow-ups of $X$ with smooth centers, then $X'$ carries a Hermitian metric $\omega_{X'}$ with the same property.
\end{example}

\subsection{Nakai--Moishezon's criterion on some compact Hermitian manifolds}
As a direct application of Theorem~\ref{thmA}, we discuss the following Nakai--Moishezon criterion for some compact non-K\"ahler manifolds.
\begin{theorem}\label{thm: Nakai-Moishezon}
    Let $X$ be a $n$-dimensional compact complex manifold equipped with a Hermitian metric $\omega_X$. 
   Assume that $\omega_X$ satisfies the volume condition~\eqref{H}. Let $\theta$ be a nef (1,1) form. Then the $\ddbar$-class $[\theta]$ is Hermitian if and only if $\widehat v_-(V,\theta|_V)>0$ for all irreducible analytic subvarieties $V\subset X$.
\end{theorem}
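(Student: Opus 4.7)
The proof is essentially a packaging of Theorem~\ref{thm: null=nb} together with the existence of a Hermitian current realizing the minimal singular locus (Theorem~\ref{thm: boucksom317}). I would split the argument into the two implications.

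For the easy direction ($\Rightarrow$), assume $[\theta]$ is Hermitian, with representative $\omega = \theta + \dc u$ positive definite. For any positive-dimensional irreducible analytic subvariety $V \subset X$, the restriction $\omega|_V$ is a Hermitian form on the regular locus of $V$, so the class $[(1-\varepsilon)\theta|_V + \varepsilon\omega_X|_V]$ is Hermitian with representative $(1-\varepsilon)\omega|_V + \varepsilon \omega_X|_V$. Since condition~\eqref{H} is a bimeromorphic invariant (Theorem~\ref{thm: invariant-H}) and passes to closed submanifolds (as recalled in the remark after the introduction), one obtains, possibly after resolving $V$ if singular, that $v_-(V, (1-\varepsilon)\theta|_V + \varepsilon\omega_X|_V)$ remains bounded below uniformly as $\varepsilon \to 0$. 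This yields $\widehat v_-(V, \theta|_V) > 0$.

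For the main direction ($\Leftarrow$), suppose $\widehat v_-(V, \theta|_V) > 0$ for every positive-dimensional irreducible analytic subvariety $V \subset X$. Taking $V = X$ gives $\widehat v_-(X,\theta) > 0$, and Theorem~\ref{thm: guedjlu} then implies that the $\ddbar$-class $[\theta]$ is big. Since $\theta$ is both nef and big and condition~\eqref{H} is in force, Theorem~\ref{thm: null=nb} applies and yields
\[
\NK(\theta) \;=\; \bigcup_{\widehat v_-(V,\theta|_V)=0} V \;=\; \varnothing,
\]
where the union is empty by hypothesis. By Theorem~\ref{thm: boucksom317}, the vanishing $\NK(\theta) = \varnothing$ produces a Hermitian current $T_0 = \theta + \dc \f_0 \in [\theta]$ with almost analytic singularities supported on $\Z(T_0) = \varnothing$. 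Hence $T_0$ is smooth on all of $X$ and dominates a Hermitian form, so $T_0$ itself is a Hermitian form representing $[\theta]$, which proves that the class $[\theta]$ is Hermitian.

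The substantive content is entirely in the backward direction, but it has already been done in Theorem~\ref{thm: null=nb}, so the real task here is just to combine it with Theorem~\ref{thm: boucksom317}. The only mildly delicate point I anticipate is the uniform lower bound on $v_-(V,(1-\varepsilon)\theta|_V+\varepsilon\omega_X|_V)$ as $\varepsilon \to 0$ in the forward direction, especially when $V$ is singular and one needs to transfer condition~\eqref{H} through a resolution; this is handled by the bimeromorphic invariance of~\eqref{H}.
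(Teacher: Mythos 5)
Your proposal is correct and follows essentially the same route as the paper: the forward direction restricts the Hermitian representative $\omega=\theta+\dc u\geq\delta\omega_X$ to $V$ and uses condition~\eqref{H} (with a resolution and Theorem~\ref{thm: invariant-H} when $V$ is singular), and the backward direction combines $\widehat v_-(X,\theta)>0$ with Theorems~\ref{thm: guedjlu} and~\ref{thm: boucksom317} (packaged in the paper as Theorem~\ref{thm: current-anasing}) and the emptiness of $\NK(\theta)$ from Theorem~\ref{thm: null=nb} to conclude the minimal-singularity Hermitian current is smooth.
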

\begin{proof}
    If the $\ddbar$ class $[\theta]$ is Hermitian, then we can find  a Hermitian metric $\omega$ such that $\omega=\theta+\dc \f$ for $\f\in\mathcal{C}^\infty(X)$, and $\omega\geq \delta\omega_X$ for some $\delta>0$. 
    If $V\subset X$ is a closed smooth manifold, then by ~\cite[Lemma 4.4]{guedj2022quasi2},
    \[\widehat v_-(V,\theta|_V)=v_-(V,\omega|_V)\geq \delta^n v_-(V,\omega_X|_V)>0. \]
If $V$ is not smooth, then thanks to Hironaka's theorem~\cite{hironaka77-actavietnam,wlodarczyk09-Resolution},
we can take $\pi:X'\to X$ to be an embedded resolution of singularities of $V$, obtained as a composition of blow-ups with smooth centers. Let $V'$ be the strict transform of $V$, which is now a closed smooth submanifold of $X'$. Thanks to~\cite[Theorem 3.7]{guedj2022quasi2}, the condition~\eqref{H} is preserved. That theorem also yields
$$v_-(V',\omega')\geq  \delta^n v_-(V',\pi^*\omega_X)>0.$$

    Conversely, with $V=X$ we observe that $\widehat v_-(X,\theta)>0$, hence it follows from Theorem~\ref{thm: current-anasing} that there exists a Hermitian current $T$ in the $\ddbar$ class $[\theta]$ with almost analytic singularities along $\NK(\theta)$
    . 
    We apply Theorem~\ref{thm: null=nb} to get that the non-Hermitian (null) locus
    \[ \NK(\theta)=\Null(\theta)=\bigcup_{\widehat v_-(V,\theta|_V)=0}V\] is empty. Therefore, the Hermitian current $T$ is, indeed, a Hermitian metric.
\end{proof}
Consequently, we provide an alternative proof of the Nakai--Moishezon criterion for compact non-K\"ahler surfaces~\cite{buchdahl00-NakaiMoishezon,lamari1999courants}. In higher dimensional case, it is just a consequence of the main result of \cite{chiose2016kahler} and \cite{popovici2016sufficients}.
We further show that it also holds for singular compact varieties.
 % We are grateful to V. Tosatti for pointing out this to us.
% \begin{corollary}\label{coro: Nakai-Moishezon}
%     Let $(X^n,\omega_X)$ be a compact complex manifold with $\dc\omega_X^k=0$ for any $k=1,\ldots,n-1$. Let $\theta$ be a smooth real (1,1) form such that
%     \begin{enumerate}
%     \item $\dc\theta^k=0$, for any $k=1,\dots, n-1$;
%         \item for any $k=1,\ldots, n$, $$\int_X\theta^k\wedge\omega_X^{n-k}>0;$$
%         \item  for every irreducible
%         subvariety $V\subset X$, 
%         $$\int_V\theta^k\wedge \omega_X^{\dim V-k}>0\,\quad,\forall\,k=1,\dots,\dim V .$$
%     \end{enumerate}Then the $\ddbar$-class $[\theta]$ is Hermitian.
% \end{corollary} The assumption (2) could be absorbed in the (3).
% In the case of complex surfaces, thanks to~\cite[Prop. 5]{Buchdahl1999compact} we can relax the assumption $(3)$: $\int_V\theta>0$ for every irreducible curve $V$ with negative self-intersection $V\cdot V<0$.

\begin{proof}[Proof of Theorem~\ref{thm: N-M criterion}] We first treat the case where $X$ is smooth.
Obviously, the metric $\omega_X$ satisfies the condition~\eqref{H}.
Thanks to Theorem~\ref{thm: Nakai-Moishezon},  it suffices to show that $\theta$ is nef. Indeed, if $\theta$ were nef, we would have $\widehat v_-(V,\theta|_V)=\int_V\theta^{\dim V}$ by (1). 
We choose a number $t>0$ so large that the $\ddbar$-class $[\theta+t\omega]$ is Hermitian, and we denote by $t_0$ the minimum of such $t$. It remains to prove that $t_0\leq 0$.
Suppose by contradiction that $t_0>0$. We see that $[\theta+t_0\omega]$ is automatically nef, but not Hermitian. 
% Indeed, if $[\theta+t_0\omega]$ is Hermitian so is $[\theta+(t_0-\varepsilon)\omega]$ for $\varepsilon>0$ small enough. 
Since $t_0>0$, the conditions (1) and (2) yield
    \[ \widehat v_-(X,\theta+t_0\omega_X)=\int_X(\theta+t_0\omega_X)^n>0.\]
    By Theorem~\ref{thm: guedjlu}, the $\ddbar$-class $[\theta+t_0\omega_X]$ is big. On the other hand, the condition (3) and Theorem~\ref{thm: Nakai-Moishezon} imply that the $\ddbar$-class $[\theta+t_0\omega_X]$ is Hermitian, a contradiction. Therefore, we must have $t_0\leq 0$ so $\theta$ is nef. 
    % The proof directly follows from Theorem~\ref{thm: Nakai-Moishezon}. 

    If $X$ is singular, Hironaka's theorem allows us to take $\pi\colon X'\to X$ to be an embedded resolution of singularities of $X$, obtained by finitely many blow-ups with smooth centers; let $D$ denote the exceptional divisor. On $X'$, $\omega_{X'}\coloneqq\pi^*\omega_X+\dc\delta\log|s_D|_h$ is a Hermitian metric for $\delta>0$ sufficiently small, which satisfies $\dc\omega_{X'}^k=0$ for all $1\leq k\leq n-1$. We then apply the previous argument using $\pi^*\theta$ and $\omega_{X'}$ to obtain a smooth function $\psi$ on $X'$ such that $\Theta=\pi^*\theta+\dc\psi$ is a Hermitian metric on $X'$. The direct image $\pi_*\Theta$ is a Hermitian current with singularities along $X_{\rm sing}$. To remove these singularities, we use an inductive strategy on dimension, following the techniques in~\cite[page 94]{collins-tosatti-singular} or~\cite[pages 18--22]{das-hacon-paun2022mmp} (which is used earlier in~\cite{demailly2004numerical}). We will only outline the main ideas here.
    
    {\it Induction.} Suppose the theorem holds for any compact normal analytic variety of pure dimension smaller than $n$ and that for any analytic subset $Y\subset X$, there exists an open neighborhood $W_Y$ of $Y$ and a smooth function $\rho_Y$ on $W_Y$ such that $\theta|_{W_Y}+\dc\rho_Y\geq \varepsilon\omega_X|_{W_Y}$ for some $\varepsilon>0$. 
    %The latter can be obtained from the same arguments in the proof of Theorem~\ref{thm: extension-current} or from the  references aforementioned. 
    
   Set $Y=X_{\rm sing}$. We may assume that $Y$ is irreducible (cf. \cite{demailly2004numerical} or \cite[pages 20-21]{das-hacon-paun2022mmp}).
   %From~\cite[page 21]{das-hacon-paun2022mmp} there exists a modification $p:X_1\to X$ such that $X_1$
    Let $p: Y' \to Y$ be a desingularization. Then we can find a Hermitian metric of the form $p^*\theta|_{Y'} + \dc \psi_Y$ on $Y'$, whose pushforward defines a Hermitian current on $Y$ with singularities along a closed analytic subset $Z \subset Y$ (note that $p: Y' \setminus p^{-1}(Z) \to Y \setminus Z$ is biholomorphic). By the induction hypothesis, there exists a neighborhood $W_Z$ of $Z$ and a smooth function $\rho_Z$ on $W_Z$ such that $\theta|_{W_Z}+\dc\rho_{Z}\geq \delta\omega_X|_{W_Z}$ for some $\delta>0$.  
    
    Using the regularized maximum function (cf.~\cite[page 43]{demaillycomplex}), we can then glue $p_*\psi_Y$ and $\rho_Z$ to obtain the desired global potential on $Y$. Indeed, we consider the regularized maximum function $\varphi_Y:=\max_\eta\{p_*\psi_Y,\rho_Z-C \}$, where $C>0$ is a large enough constant, such that $\varphi_Y=\pi_*\psi_Y$ near the boundary of $W_Z$ since $\psi_Y$ is smooth on the complement of $Z$. We clearly observe that
$\varphi_Y=\rho_Z-C$ in a neighborhood of $Z$, since $p_*\psi_Y$ equals $-\infty$ when restricted to $Z$. 
By the usual properties of the regularized maximum of two functions, we have $\theta|_Y+\dc\varphi_Y\geq \delta'\omega_X|_Y$ for some $\delta'>0$. 
Then, exists a neighborhood $Y\subset W_Y\subset X$ and an extension $\rho_Y\in\mathcal{C}^\infty(W_Y)$ of $\varphi_Y$ such that $\theta|_{W_Y}+\dc\rho_Y\geq \varepsilon\omega_X|_{W_Y}$ for some $\varepsilon>0$.
    
    Finally, we apply the same gluing process for $\pi_*\psi$ and $\rho_{X_{\rm sing}}$ to conclude the proof.
    % Hence, using the regularized maximum function; cf.~\cite[page 43]{demaillycomplex}, we can glue $\pi_*\psi_Y$ with $\rho_Z$    
    % to get the desired one. Finally, we apply with $\pi_*\psi$ and $\rho_{X_{\rm sing}}$ to conclude.
\end{proof}

\section{Finite time singularities of the Chern--Ricci flow}\label{sect: crf} 
% Let $X$ be a compact complex $n$-dimension manifold equipped with a Hermitian metric $\omega_0$. We denote by $d^c=\frac{\sqrt{-1}}{2}(\Bar{\partial}-\partial)$ so that $\dc=\sqrt{-1}\partial\Bar{\partial}$.
Let $(X,\omega_0)$ be a compact Hermitian manifold of dimension $n$.
We consider a solution $\omega=\omega(t)$ of the Chern--Ricci flow starting from $\omega_0$ that is given by a smooth family of Hermitian metrics satisfying 
\begin{equation}\label{crf}
    \begin{cases}
        \frac{\partial\omega}{\partial t}=-\ric(\omega),\; 0\leq t<T\\
       \omega(0)=\omega_0.
    \end{cases}
\end{equation}
One observes that, just as the K\"ahler--Ricci flow, the Chern--Ricci flow is equivalent to a parabolic equation for a scalar function. That is, solving a solution to the Chern--Ricci flow starting at $\omega_0$ boils down to solving the parabolic complex Monge--Amp\`ere equation for $\f=\f(t)$ with $t\in[0,T[$,
\begin{equation}
    \frac{\partial\f}{\partial t}=\log\frac{(\theta_t+\dc\f)^n}{\omega_0^n},\quad\theta_t+\dc\f>0,\quad \f(0)=0,
\end{equation}
where $\theta_t\coloneqq \omega_0- t\ric(\omega_0)$. Since $X$ is compact, by the standard parabolic theory,
the Chern--Ricci flow always admits a unique solution in some time interval $[0,T[$ for some $0< T\leq \infty$; see, e.g.,~\cite[Theorem 3.5]{tosatti2018kawa}.
As follows from~\cite{tosatti2015evolution}, the maximal existence time $T$ can be characterized by
\[ T=\sup\{t>0: \exists\,\psi\in\mathcal{C}^\infty(X)\,\text{with}\,\theta_t+\dc\psi>0\}.\]
When $\omega_0$ satisfies the Guan--Li condition $\dc\omega_0^k=0$ for all $1\leq k\leq n-1$, we can check that this condition is preserved by the Chern--Ricci flow. Moreover, $\dc\theta_t^k=0$ for all $1\leq k\leq n-1$.
Thanks to the Nakai--Moishezon criterion (Theorem~\ref{thm: N-M criterion}), we obtain a geometric characterization of the maximal existence time
\begin{equation*}
  \begin{split}
     T=\sup\bigg\{t>0:&\int_V\theta_t^k\wedge\omega_0^{\dim V-k}>0,\\ &\forall\,  \text{irreducible subvarieties} \, V\subset X, k=\overline{1,\dim V}
     \bigg\}.
      \end{split}
\end{equation*}
This extends the result \cite[Theorem 1.3]{tosatti2015evolution} to higher dimensions.

In this section, we study finite time singularities of the Chern--Ricci flow, proving Theorem~\ref{thm-parabolic}.
Suppose that $T<\infty$, so that the flow develops singularities at a finite time. 
Following~\cite{collins2015kahler} (see also~\cite{tosatti2022chern}), the set on which the singularities of the Chern--Ricci flow develop on $X$ (depending on the initial metric $\omega_0$) is defined by 
\begin{equation}
    \label{set: sing} 
    \begin{split}
        \Z(\omega(t))=X\backslash\{x\in X| U\ni x  ,\exists\,\omega_T\, &\text{Hermitian metric on}\, U \\
        & \text{s.t.}\, \omega(t)\xrightarrow{t\to T^-}\omega_T\,\text{in}\, \mathcal{C}^\infty_{\rm loc}(U)\}.
    \end{split}
\end{equation}
 Following Z. Zhang~\cite{Zhang10-scalar}, we 
 % observe that the set of singularities of the Chern--Ricci flow $\Sigma$ can be characterized using the scalar curvature of evolving metrics. We 
define the singularity formation set $\Sigma$ of the flow (which
depends on the initial metric $\omega_0$) to be the complement of the set of points $x\in X$ such that there exists an open neighborhood $U$ of $x$ and a constant $C>0$ with $|\textrm{Rm}(t)|_{\omega(t)}\leq C$  on $[0, T[\times U$, where $\textrm{Rm}(t)$  denotes the curvature tensor of $\omega(t)$. Also, we define the set $\Sigma'$ as the complement of the set of points $x\in X$ such that there exists an open neighborhood $U$ of $x$ a constant $C>0$ with $|R(t)|_{}\leq C$ on $[0,T[\times U$, where $R(t)$ is the scalar curvature of $\omega(t)$.

 Gill and Smith~\cite{gill-smith14-chernricci} showed that the singularity set of the flow is equal to the region on which the Chern scalar curvature blows up. The latter set is the singularity formation locus of the flow due to~\cite{enders11-typeSingularities}. We give two equivalent definitions of the singularity set $\Z(\omega(t))$.
\begin{theorem}\label{thm: sing-formation}Assume that the Chern--Ricci flow~\eqref{crf} exists on the maximal time interval $[0,T[$ with $T<\infty$. Then 
\[ \Z(\omega(t))=\Sigma=\Sigma'.\]
\end{theorem}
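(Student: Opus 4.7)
The inclusions $\Sigma' \subseteq \Sigma \subseteq \Z(\omega(t))$ are essentially formal. For the first, the Chern scalar curvature is a metric contraction of $\ric(\omega)$, itself a contraction of $\textrm{Rm}(\omega)$, so a local uniform bound on $|\textrm{Rm}|_{\omega(t)}$ on $U\times[0,T)$ yields one on $|R(t)|_{\omega(t)}$. For the second, if $\omega(t)\to\omega_T$ smoothly on an open $U\ni x$ as $t\to T^-$, then $|\textrm{Rm}|_{\omega(t)}$ is bounded on any $\overline{U'}$ with $U'\Subset U$ for $t$ close to $T$; combining this with the smoothness of the flow on $[0,T-\delta]\times X$ produces a bound on all of $[0,T)\times U'$. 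So the substantive content is the reverse inclusion $\Z(\omega(t))\subseteq\Sigma'$.

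My plan is as follows. Fix $x\notin\Sigma'$ and an open $U\ni x$ with $|R(t)|_{\omega(t)}\leq C_0$ on $U\times[0,T)$. Differentiating $\dot\f=\log\bigl(\omega(t)^n/\omega_0^n\bigr)$ in time gives
\[
\partial_t\dot\f \;=\; \tr_{\omega(t)}\partial_t\omega(t) \;=\; -R(t),
\]
so $\dot\f$ is uniformly bounded on $U\times[0,T)$; integrating once more from the initial datum $\f(\cdot,0)=0$ shows $|\f|\leq C$ on $U\times[0,T)$, and exponentiating pinches the volume ratio $\omega(t)^n/\omega_0^n=e^{\dot\f}$ between two positive constants there.

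The next step, which is the heart of the argument, is a \emph{local} second-order estimate: for every $U'\Subset U$ there is a constant $C_{U'}$ with $\tr_{\omega_0}\omega(t)\leq C_{U'}$ on $U'\times[0,T)$. I intend to obtain this by applying the parabolic maximum principle to a quantity of the form
\[
Q \;=\; \eta\bigl(\log\tr_{\omega_0}\omega(t)\bigr) - A\f,
\]
where $\eta$ is a smooth cut-off with $\eta\equiv 1$ on $U'$ and compact support in $U$, and $A\gg 1$; this is the parabolic analogue of the Cherrier--Tosatti--Weinkove second-order estimate carried out by Gill--Smith~\cite{gill-smith14-chernricci}, with the non-K\"ahler torsion terms absorbed by the Phong--Sturm trick already used in the proof of Theorem~\ref{thm: metric}. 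Combined with the volume pinching above, this yields $C^{-1}\omega_0\leq\omega(t)\leq C\omega_0$ on $U'\times[0,T)$. The parabolic complex Evans--Krylov estimate~\cite{tosatti2010estimates} and Schauder bootstrapping then supply uniform $\mathcal{C}^k$ bounds for $\f$ on any compact subset of $U'$ for every $k$, so a subsequence of $\omega(t)$ converges smoothly to a Hermitian metric $\omega_T$ on $U'$; hence $x\notin\Z(\omega(t))$.

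The hard part will be the localized $C^2$ estimate: the cut-off $\eta$ generates bad gradient error terms that must be absorbed against the good negative term $-A\f$ produced by $\dc\f$ in the evolution of $\log\tr_{\omega_0}\omega(t)$, and this absorption requires the non-K\"ahler torsion-corrected trace inequalities. Once this estimate is established, the higher-order regularity, the extraction of the smooth limit, and the identification of $\omega_T$ as a Hermitian metric all proceed along the standard Chern--Ricci-flow template.
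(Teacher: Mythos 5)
Your reduction of the theorem to the inclusion $\Z(\omega(t))\subseteq\Sigma'$ and your zeroth-order step are exactly the paper's: $|R|\leq C_0$ on $[0,T[\times U$ gives $|\partial_t\f|\leq C$, $|\f|\leq C$ and the two-sided volume pinching. The genuine gap is in the test quantity you propose for the local second-order estimate. With $Q=\eta\,\log\tr_{\omega_0}\omega-A\f$, the auxiliary term contributes
\[
\Bigl(\tfrac{\partial}{\partial t}-\Delta_\omega\Bigr)(-A\f)\;=\;-A\,\partial_t\f+An-A\,\tr_\omega\theta_t,
\]
and to dominate the bad term $+C\tr_\omega\omega_0$ in the evolution inequality for $\log\tr_{\omega_0}\omega$ you would need $\theta_t\geq\delta\,\omega_0$ \emph{uniformly up to time $T$}. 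That is exactly what fails at a finite-time singularity: $[\theta_T]$ is not Hermitian, so $\theta_T$ cannot dominate any positive multiple of $\omega_0$, and this theorem makes no bigness assumption, so there is also no quasi-psh barrier $\rho$ with $\theta_t+\dc\rho\geq\delta_0\omega_0$ as in Theorem~\ref{thm: metric} or Lemma~\ref{lem: C2estimate}. Hence $-A\tr_\omega\theta_t$ can have the wrong sign, no good negative multiple of $\tr_\omega\omega_0$ is produced, and the maximum-principle argument does not close; the Phong--Sturm trick only absorbs the torsion/gradient terms, not this missing term.

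What Gill--Smith and the paper use instead is the Tsuji-type quantity $H=t\,\partial_t\f-\f-nt$, which is bounded on $[0,T[\times U$ precisely because of the scalar curvature bound (this is where the hypothesis actually enters the second-order estimate) and satisfies $(\partial_t-\Delta_\omega)H=-\tr_\omega\omega_0$. The paper's test function is $Q=\log\tr_{\omega_0}\omega+AH+e^{H}$, with the exponential term playing the Phong--Sturm role after the first-order condition at the maximum, and the maximum is taken over $[0,T']\times K$ with $K\Subset U$, without any cutoff; if you insist on the cutoff $\eta$ you must in any case replace $-A\f$ by $+AH$ (plus $e^{H}$) and then still handle the $\nabla\eta$ cross terms, which is exactly the delicate point the author criticizes in Gill--Smith. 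Two smaller remarks: the bound obtained at the maximum is on $\tr_{\omega}\omega_0$ and must be converted via $\tr_{\omega_0}\omega\leq n(\tr_\omega\omega_0)^{n-1}\,\omega^n/\omega_0^n$ together with the volume pinching; and subsequential smooth convergence is not quite enough for $x\notin\Z(\omega(t))$ --- use $|\partial_t\f|\leq C$ to get uniform convergence of $\f(t)$ as $t\to T^-$, and then the uniform local $\mathcal{C}^k$ bounds upgrade this to a genuine $\mathcal{C}^\infty_{\rm loc}$ limit.
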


\begin{proof}
This was proved by M. Gill and K. Smith. However, it is unclear how the derivative $\partial_iQ_1$ in (3.9) of~\cite{gill-smith14-chernricci} vanishes at the maximum point of $Q_2$. 
We give the proof here,
mimicking their estimates with minor modifications.
\smallskip

   Since a uniform bound on the curvature tensor implies an upper bound on the scalar curvature, we have $\Sigma'\subset\Sigma$.
   If the metrics $\omega(t)$ converges smoothly to a Hermitian metric $\omega_U$ on $[0,T[\times U$ for some open subset $U$, then we have $|\textrm{Rm}(t)|_{\omega(t)}\leq C$, hence the inclusion $\Z(\omega(t))\subset \Sigma'$.
   It remains to show that if for any $x\in X$ with an open neighborhood $U$ of $x$ and $C>0$ satisfying $|R(t)|\leq C$ on $[0,T[\times U$, then on
a possibly smaller open neighborhood $U'$ of $x$ we have that $\omega(t)$ converges smoothly to a Hermitian metric $\omega_{U'}$ as $t\to T^-$. 
We note that most of estimates are local.
%We remark that most of the estimates from the proof of Theorem~\ref{thm: singularity-flow} are local and hence could be applied again here.

Fixing $x\in K\subset \subset U$ a compact subset, we will establish uniform estimates on $\omega(t)$ in $K$. It follows from the flow equation that
\[\frac{\partial \f}{\partial t}=\log\frac{\omega(t)^n}{\omega_0^n},\quad \frac{\partial}{\partial t}\left(\frac{\partial\f}{\partial t}\right)=-R. \]
Since $|R(t)|\leq C$ on $[0,T[\times U$, we have $|{\partial^2_{tt}\varphi}|\leq C$. Integrating in time implies that $|\partial_t\f|\leq C$ and $|\f|\leq C$ on $[0,T[\times U$. Therefore, the quantity $H\coloneqq t\partial_t\f-\f-nt$ is uniformly bounded on $[0,T[\times U$, also is $\partial_t H=-tR-n$, and
\[ \left( \frac{\partial}{\partial t}-\Delta_\omega\right)H=-\tr_\omega\omega_0.\]
We set on $[0,T[\times K$
\[ Q\coloneqq\log\tr_{\omega_0}\omega+AH+e^{H}\] for $A>0$ to be determined hereafter. The quantity $Q$ reaches its maximum at some point $(t_0,x_0)\in [0,T[\times K$. 
% We apply the same arguments as in Lemma~\ref{lem: C2estimate} with $-H$ instead of $u$ to obtain
% \[0\leq\left( \frac{\partial}{\partial t}-\Delta_\omega\right)Q=(C-A-e^H)\tr_\omega\omega_0+C  \]
% with a uniform $C>0$. Noticing that we assumed here $(\tr_\omega\omega_0)^2\geq e^{-H}C(A+1)^2$ at $(t_0,x_0)$, otherwise we are done. 
 It follows from~\cite[Proposition 3.1]{tosatti2015evolution} (also~\cite[(4.2)]{tosatti2015evolution}) that 
		\begin{equation}\label{eq: c2estimate--TW}
		    \left(\frac{\partial}{\partial t}-\Delta_{\omega}\right)\log\tr_{\omega_0}\omega\leq \frac{2}{(\tr_{\omega_0}\omega)^2}\textrm{Re}(g^{\Bar{q}k}(T_0)^p_{kp}\partial_{\Bar{q}}\tr_{\omega_0}\omega)+C\tr_\omega \omega_0,
		\end{equation}
		where $(T_0)^p_{kp}$ denote the torsion terms corresponding to $\omega_0$. At the maximum point $(t_0,x_0)$ of $Q$ we have $\partial_{\Bar{q}} Q=0$, hence \[\frac{1}{\tr_{\omega_0}\omega}\partial_{\Bar{q}}\tr_{\omega_0}\omega+A\partial_{\Bar{q}}H+e^{H}\partial_{\Bar{q}} H=0.\]
		Therefore, the Cauchy--Schwarz inequality yields
		\begin{equation}\label{eq: CS-sing-set}
		\begin{split}
		\left| \frac{2}{(\tr_{\omega_0}\omega)^2}\textrm{Re}(g^{\Bar{q}k}(T_0)^p_{kp}\partial_{\Bar{q}}\tr_{\omega_0}\omega)\right|&\leq \left| \frac{2}{\tr_{\omega_0}\omega}\textrm{Re}((A+e^{H})g^{\Bar{q}k}(T_0)^p_{kp}\partial_{\Bar{q}} H\right|\\
		&\leq e^{H}|\partial H|^2_\omega+C(A+1)^2e^{-H}\frac{\tr_\omega \omega_0}{(\tr_{\omega_0}\omega)^2}
		\end{split}
		\end{equation} for uniform $C>0$ only depending on the torsion term. From~\eqref{eq: c2estimate--TW} and~\eqref{eq: CS-sing-set}, we have at $(t_0,x_0)$,
\begin{equation}\label{eq: est46}
		\begin{split}
		0\leq \left(\frac{\partial}{\partial t}-\Delta_{\omega}\right)Q &\leq  C(A+1)^2e^{-H}\frac{\tr_\omega \omega_0}{(\tr_{\omega_0}\omega)^2}+C\tr_\omega \omega_0  -(A+e^{H})\tr_\omega\omega_0.
		\end{split}
		\end{equation}
  If $(\tr_{\omega_0}\omega)^2\leq C(A+1)^2e^{-H}$ at $(t_0,x_0)$, then
\[ Q\leq C-\frac{1}{2}H+AH+e^H\leq C_1,\]
  we are done. Otherwise,
we choose $A=C+1$, then it follows from~\eqref{eq: est46} that at $(t_0,x_0)$,
\[\tr_\omega\omega_0\leq C. \]
On the other hand, since $\log(\omega^n/\omega_0^n)=\partial_t\f\leq C$, hence
\[ \tr_{\omega_0}\omega\leq C(\tr_\omega\omega_0)^{n-1}\frac{\omega^n}{\omega_0^n}\leq C.\]
This implies that $Q$, and thus $\tr_{\omega_0}\omega$ is bounded from above on $[0,T[\times K$. Finally, the local higher-order estimates for $\omega$ on $[0,T[ \times U'$ with $U'\subset K$
follow, thanks to Sherman--Weinkove's estimates~\cite{sherman-weinkove13-estimates}. The proof is complete.
\end{proof}
Consequently, the scalar curvature blows up at a finite time singularity, generalizing Zhang's analogous result~\cite{Zhang10-scalar} in the K\"ahler case.
\begin{corollary}[{\cite[Theorem 1.1]{gill-smith14-chernricci}}]
    If the Chern--Ricci flow develops a finite time singularity, then
the singularity formation set $\Sigma$ is nonempty, and furthermore, \[\limsup_{t\to T} \left(\sup_X R(t)\right)=+\infty.\]
\end{corollary}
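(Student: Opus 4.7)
The plan has two parts, both resting on the identifications $\Z(\omega(t))=\Sigma=\Sigma'$ just established in the preceding theorem.

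First I would show $\Sigma\neq\emptyset$. Suppose otherwise; by the previous theorem $\Z(\omega(t))=\emptyset$, so every $x\in X$ admits an open neighborhood $U_x$ on which $\omega(t)$ converges smoothly to a Hermitian metric $\omega_{U_x}$ as $t\to T^-$. By compactness of $X$ and uniqueness of local smooth limits, a patching argument produces a global smooth Hermitian limit $\omega_T\in[\theta_T]$ with $\omega(t)\to\omega_T$ in $\mathcal{C}^\infty(X)$. But then short-time existence for the Chern--Ricci flow starting from $\omega_T$ would allow us to extend $\omega(t)$ smoothly past time $T$, contradicting the characterization
\[T=\sup\{t>0:\exists\,\psi\in\mathcal{C}^\infty(X)\text{ with }\theta_t+\dc\psi>0\}\]
of the maximal existence time recalled at the start of this section.

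Next I would deduce the scalar curvature blow-up. From $\Sigma=\Sigma'\neq\emptyset$ there exists a point $x_0\in X$ near which $|R(t)|$ fails to be locally uniformly bounded as $t\to T^-$. The key remaining step is a uniform lower bound $R(t)\geq -C_0$ on $[0,T[\times X$: once this is in hand, the unboundedness of $|R|$ forces unboundedness from above, giving $\limsup_{t\to T^-}\sup_X R(t)=+\infty$.

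For this lower bound I would apply the scalar maximum principle to the evolution of $R$ along the Chern--Ricci flow. Differentiating $(\ric(\omega))_{i\bar j}=-\partial_i\partial_{\bar j}\log\det g$ and using $\partial_t\log\det g=-R$ together with $|\ric(\omega)|^2_\omega\geq R^2/n$, one obtains a parabolic inequality of the form
\[\partial_t R\;\geq\;\Delta_\omega R+\tfrac{1}{n}R^2-C,\]
where $C>0$ absorbs the torsion contributions of $\omega_0$; these are controlled by standard a~priori bounds on $\f$, $\partial_t\f$ and the equivalence of $\omega(t)$ with $\omega_0$ on the finite interval $[0,T[$. Comparing $R_{\min}(t):=\min_X R(t)$ with the ODE $y'=y^2/n-C$ on $[0,T[$ then yields the required lower bound and completes the proof.

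The main obstacle is the careful derivation of this differential inequality in the non-K\"ahler setting: the K\"ahler identities no longer apply, and the torsion of $\omega_0$ produces extra terms in $\partial_t R$ that must be estimated uniformly on $[0,T[\times X$. Once the torsion terms are handled via the standard Chern--Ricci flow estimates, the maximum principle and ODE comparison are routine.
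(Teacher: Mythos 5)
Your first part coincides with the paper's own argument: if $\Sigma=\varnothing$, then by Theorem~\ref{thm: sing-formation} (whose local estimates apply with $K$ replaced by $X$) the metrics $\omega(t)$ converge in $\mathcal{C}^\infty(X)$ to a Hermitian metric $\omega_T=\theta_T+\dc\f_T$, so the class $[\theta_T]$ would be Hermitian, contradicting the characterization of the maximal existence time $T$ (your restart-the-flow variant gives the same contradiction). For the second part the paper is terse, deferring essentially to Gill--Smith, and you correctly identify the ingredient needed to upgrade ``$|R|$ is not locally bounded near a point of $\Sigma'$'' to $\limsup_{t\to T}\sup_X R(t)=+\infty$, namely a uniform lower bound $R(t)\geq -C_0$ on $[0,T[\times X$. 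So your outline is sound and, if anything, more complete than the paper's two-line proof.

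The one step whose justification would fail as written is your treatment of the ``torsion contributions''. First, there are none to estimate: since $\ric(\omega)=-\dc\log\det g$ and $\partial_t\log\det g=-R$, a direct pointwise computation gives the exact identity
\begin{equation*}
\frac{\partial R}{\partial t}=\Delta_\omega R+|\ric(\omega)|^2_\omega\;\geq\;\Delta_\omega R+\tfrac{1}{n}R^2 ,
\end{equation*}
with $\Delta_\omega$ the Chern Laplacian, so the minimum principle immediately yields $R(t)\geq\min_X R(0)$ on $[0,T[$, with no error term and no auxiliary a priori bounds. Second, and more importantly, the control you propose for the putative error term is not available: uniform equivalence of $\omega(t)$ with $\omega_0$ up to the singular time is exactly what fails at a finite-time singularity, and uniform bounds on $\f$ and $\partial_t\f$ over all of $X$ require the bigness of $\theta_T$ and even then degenerate along $\NK(\theta_T)$ (cf.\ Lemma~\ref{lem: bound}, where the lower bound is $C_0\rho-C_0$); invoking them here would be circular, since the corollary makes no bigness assumption. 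Replace that paragraph by the exact evolution identity above and the rest of your argument (maximum principle and ODE comparison) goes through and recovers the Gill--Smith statement.
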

\begin{proof}
    If $\Sigma=\varnothing$, then by Theorem~\ref{thm: sing-formation}, replacing $K$ with $X$, we would see that $\omega(t)$ converges in $\mathcal{C}^\infty$ to a Hermitian metric $\omega_T$ on $X$, a contradiction. By~\cite[Lemma 3.1]{gill-smith14-chernricci} and the maximum principle, the scalar curvature $R(t)$ is uniformly bounded from below. 
    The blow-up of the supremum of the scalar curvature again follows from Theorem~\ref{thm: sing-formation}.
\end{proof}
Next, we compare the singularity formation set of the flow $\Z(\omega(t))$ with the null locus of the limiting form $\theta_T=\omega_0-T\ric(\omega_0)$ in some specific contexts.
We see that the $\ddbar$ class $[\theta_T]$ is nef,
but not Hermitian. Since the definition of $\widehat v_-(X,\theta_T)$ does not depend on the choice of a reference Hermitian metric $\omega_X$ and for any $\varepsilon>0$,
\[(1-\varepsilon)\theta_T+\varepsilon\omega_0=\theta_{(1-\varepsilon){T}}, \]
so we have
\[ \widehat v_-(V,\theta_T|_V)=\inf_{t\to T^-}v_-(V,\omega(t)|_V)\]
for all irreducible analytic subvarieties  $V\subset X$.
% We say that the flow is {\em collapsing} at time $T$ if the volume of $X$ with respect to $\omega(t)$, denoted by $\Vol(X,\omega(t))$, goes to zero as $t\to T^-$. Otherwise, the flow is {\em non-collapsing} if $\Vol(X,\omega(t))\to 0$ as $t\to T$. 
We define the set-theoretic union of all such subvarieties for which the infimum volume shrinks to zero as $t\to T^-$,
\begin{equation}\label{def: union--zeroset}
 N(\omega(t))\coloneqq\bigcup_{v_-(V,\omega(t)|_V)\xrightarrow{t\to T^-
 }0}V.   
\end{equation} 
In particular, the set $N(\omega(t))$ is a closed analytic subset of $X$. 
% Thanks to Theorem~\ref{thmA} $N(\omega(t))=\NK(\theta_T)$ provided a Hermitian metric $\omega_X$ satisfying the condition~\eqref{H}. 

If $\theta_T$ is big, then we can find a $\theta_T$-psh function $\rho$ with almost analytic singularities along $\NK(\theta_T)$ such that $$\theta_T+\dc\rho\geq 2\delta_0\omega_0$$ for some $\delta_0>0$. Up to subtracting a positive constant, we can assume that $\sup_X\rho\leq 0$. We have
\[ \theta_t+\dc\rho=\frac{1}{T}\left((T-t)(\omega_0+\dc\rho)+t(\theta_T+\dc\rho)\right)\geq \delta_0\omega_0, \]
if $t\in [T-\varepsilon,T[$ for some $\varepsilon>0$.

We are interested in the behavior of the metrics $\omega(t)$ as $t\to T^-$,
proving the following theorem.
\begin{theorem}\label{thm: singularity-flow}
    Assume that the limiting form $\theta_T$ is big. Then, there exists a positive (1,1) current $\omega_T$ in $X$ in the $\ddbar$ class $[\theta_T]$, which is precisely smooth away from $\NK(\theta_T)$ and has minimal singularities, such that as $t\to T^-$ the metrics $\omega(t)$ converge to $\omega_T$ in both the current and the $\mathcal{C}_{\rm loc}^\infty(X\backslash \NK(\theta_T))$ senses.  In particular, $\Z(\omega(t))\subset              
    \NK(\theta_T)$.
\end{theorem}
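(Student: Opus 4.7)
The Chern--Ricci flow~\eqref{crf} reduces to the parabolic Monge--Amp\`ere equation $\partial_t \f = \log((\theta_t + \dc\f)^n/\omega_0^n)$, $\f(0)=0$, with $\omega(t) = \theta_t + \dc\f(t)$. Since $[\theta_T]$ is big by hypothesis, Theorem~\ref{thm: current-anasing} and Theorem~\ref{thm: boucksom317} supply a $\theta_T$-psh function $\rho$, smooth on $X\setminus\NK(\theta_T)$, normalized by $\sup_X\rho=0$ and satisfying $\theta_T + \dc\rho \geq 2\delta_0\omega_0$. Writing $\theta_t = \theta_T + (T-t)\ric(\omega_0)$, for $t$ in a small left-neighborhood $[T-\varepsilon,T)$ of $T$ one has $\theta_t + \dc\rho \geq \delta_0\omega_0$. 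This $\rho$ will serve both as a lower barrier for the potential and as the weight in the Laplacian estimate.

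\textbf{Uniform estimates on compacts of $X\setminus\NK(\theta_T)$.} The plan is first to obtain a uniform two-sided bound $|\f(t)| \leq C$ on $[0,T)\times X$. The upper bound follows directly from the parabolic maximum principle using $\theta_t \leq C\omega_0$. The lower bound is obtained by a parabolic Kolodziej-type argument adapted to the Hermitian setting in the spirit of~\cite{tosatti2015evolution}, exploiting the strict positivity $\theta_t + \dc\rho \geq \delta_0\omega_0$; since $\rho \leq 0$, this gives $\f(t) \geq \rho - C$ uniformly. Next I would run the Laplacian estimate by applying the parabolic maximum principle to
\[ Q := \log\tr_{\omega_0}\omega(t) - A u + e^{-u}, \qquad u := \f(t) - \rho + C + 1, \]
following Step 2 of Theorem~\ref{thm: metric} and~\cite[Proposition 3.1]{tosatti2015evolution}. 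The computation closely mirrors~\eqref{eq: c2estimate--TW}--\eqref{eq: CS-sing-set}: the Hermitian torsion terms produced by $\omega_0$ not being K\"ahler are absorbed via Cauchy--Schwarz at the maximum point, while the positive contribution $(A\delta_0 - C)\tr_{\omega(t)}\omega_0$ arising from $\theta_t + \dc\rho \geq \delta_0\omega_0$ dominates once $A$ is chosen sufficiently large. The outcome is the pointwise bound $\tr_{\omega_0}\omega(t) \leq C_K e^{-A\rho}$, and therefore a uniform Laplacian bound on each compact $K\subset\subset X\setminus\NK(\theta_T)$. The main technical obstacle is precisely the absorption of these torsion terms, which is made possible only by the strict positivity coming from the bigness of $[\theta_T]$.

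\textbf{Higher order estimates and passage to the limit.} With uniform $C^0$ and $C^2$ bounds on each compact $K\subset\subset X\setminus\NK(\theta_T)$, the parabolic Evans--Krylov--Trudinger estimate in the Hermitian setting combined with Schauder estimates (cf.~\cite{sherman-weinkove13-estimates}, as invoked in the proof of Theorem~\ref{thm: sing-formation}) gives uniform $\mathcal{C}^{j,\alpha}$ bounds on $\f(t)$ for every $j\geq 0$. Moreover, the volume form $\omega(t)^n$ is uniformly comparable to $\omega_0^n$ on $K$, hence $\partial_t\f = \log(\omega^n/\omega_0^n)$ is uniformly bounded, so $\f(t)$ is Lipschitz in $t$ on $K$. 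Together with the higher-order estimates this forces $\f(t)$ to converge in $\mathcal{C}^\infty_{\rm loc}(X\setminus\NK(\theta_T))$ as $t\to T^-$, without extracting a subsequence. Globally, $\{\f(t)\}_{t\in[T-\varepsilon,T)}$ is uniformly bounded above and lies in $\PSH(X,\theta_T + \eta\omega_0)$ with $\eta = O(T-t)$, so it is relatively compact in $L^1(X)$; any $L^1$ cluster point is $\theta_T$-psh, coincides with the smooth limit on the dense open $X\setminus\NK(\theta_T)$, and is therefore uniquely determined since $\NK(\theta_T)$ is pluripolar. Call the limit $\f_T$ and set $\omega_T := \theta_T + \dc\f_T$. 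The inequality $\f_T \geq \rho - C$ together with the fact that $\rho$ has minimal singularities in $[\theta_T]$ (which is exactly what Theorem~\ref{thm: current-anasing} provides) yields that $\omega_T$ has minimal singularities. Finally, smooth convergence on $X\setminus\NK(\theta_T)$ immediately gives $\Z(\omega(t)) \subset \NK(\theta_T)$, completing the proof.
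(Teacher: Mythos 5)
Your estimates and the passage to the limit follow essentially the same route as the paper (barrier $\rho$ from the bigness of $[\theta_T]$, Phong--Sturm trick for $\tr_{\omega_0}\omega \leq Ce^{-C\rho}$, Evans--Krylov plus Schauder on compact subsets of $X\setminus\NK(\theta_T)$, then $L^1$ compactness to define $\f_T$). But there is a genuine gap at the very end: your argument for minimal singularities is wrong. You claim that $\rho$ ``has minimal singularities in $[\theta_T]$, which is exactly what Theorem~\ref{thm: current-anasing} provides.'' It provides no such thing: that theorem gives a \emph{Hermitian} current with almost analytic singularities whose singular set is $\NK(\theta_T)$, and such a current is in general strictly \emph{more} singular than a current with minimal singularities (it has strictly positive generic Lelong numbers along every component of $\NK(\theta_T)$, which a minimal-singularity potential need not have). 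Indeed, if $\rho$ were both Hermitian and of minimal singularities, the remark following the theorem in the paper would force $[\theta_T]$ to be a Hermitian class, contradicting the finite-time singularity setup. So $\f_T \geq \rho - C$ only tells you $\f_T$ is no more singular than $\rho$; it does not compare $\f_T$ with an arbitrary $\theta_T$-psh function. The paper closes this by a separate parabolic argument: given any $u\in\PSH(X,\theta_T)$, regularize by Demailly's theorem to get $u_m$ with $\theta_T+\dc u_m\geq -\varepsilon_m\omega_0$, apply the minimum principle to $G=(T-t)\partial_t\f+\f+nt-\varepsilon_m(t\partial_t\f-\f-nt)-u_m$, which satisfies $(\partial_t-\Delta_\omega)G=\tr_\omega(\theta_T+\varepsilon_m\omega_0+\dc u_m)\geq 0$, and then let $m\to\infty$ using $\partial_t\f\leq C_0$. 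Some argument of this kind is indispensable.

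A secondary, smaller omission: you assert that ``$\omega(t)^n$ is uniformly comparable to $\omega_0^n$ on $K$,'' but the lower bound on the volume ratio (equivalently the lower bound $\partial_t\f\geq C_0\rho-C_0$ of the paper's Lemma~\ref{lem: bound}) does not follow from the $C^0$ bound and the trace upper bound; it needs its own maximum-principle argument. This bound is also what makes the equation uniformly parabolic on $K$, so Evans--Krylov and your Lipschitz-in-$t$ claim both rest on it; as written, your proof uses it without establishing it.
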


\begin{lemma}\label{lem: bound}
    There is a uniform constant $C_0>0$ such that on $[0,T[\times X$ we have
		\begin{enumerate}[label=(\roman*)]
			\item $C_0\geq \f\geq \rho-C_0$;
			\item $C_0\geq \partial_t{\f}\geq C_0\rho-C_0$.
		\end{enumerate}
\end{lemma}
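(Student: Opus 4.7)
The plan is to derive all four inequalities by parabolic maximum-principle arguments on carefully chosen auxiliary functions. Two algebraic identities from the flow cause every computation to collapse cleanly: $\theta_t + t\,\ric(\omega_0) = \omega_0$ and $\theta_t - (T-t)\,\ric(\omega_0) = \theta_T$. The crucial auxiliary function is the time-scaled singular potential $\widetilde\rho(t,x) := \tfrac{t}{T}\rho(x)$, for which
\[
\theta_t + \dc\widetilde\rho \;=\; \frac{T-t}{T}\,\omega_0 \;+\; \frac{t}{T}\,(\theta_T + \dc\rho) \;\geq\; c_0\,\omega_0, \qquad c_0 := \min(1,2\delta_0),
\]
uniformly on $[0,T]$. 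This uniform positivity, together with the fact that $\widetilde\rho \equiv -\infty$ on $\NK(\theta_T) \times (0,T)$, localizes every minimum away from the singular locus.

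For (i), the upper bound $\varphi \leq C_0$ is Hamilton's trick: at a spatial maximum of $\varphi(t,\cdot)$, $\dc\varphi \leq 0$ gives $\omega \leq \theta_t \leq C'\omega_0$, so $\partial_t\varphi \leq n\log C'$, which integrates to a uniform bound. For the lower bound, I will apply the maximum principle to $H := \varphi - \widetilde\rho + At$ with $A$ large. Since $H \equiv +\infty$ on $\NK(\theta_T) \times (0,T)$, any spacetime minimum is attained in the complement. At an interior minimum, $\dc(\varphi-\widetilde\rho) \geq 0$ yields $\omega \geq c_0\omega_0$ and hence $\partial_t\varphi \geq n\log c_0$; combined with $\partial_t H = \partial_t\varphi - \rho/T + A \geq n\log c_0 + A > 0$ for $A$ large (using $-\rho/T \geq 0$), this contradicts interior minimality. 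Therefore the minimum is at $t=0$, where $H \equiv 0$, and the bound $\varphi \geq \widetilde\rho - AT \geq \rho - AT$ follows from $(t/T)\rho \geq \rho$.

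For (ii), the upper bound uses the classical $H := t\,\partial_t\varphi - \varphi - nt$; invoking $\theta_t + t\,\ric(\omega_0) = \omega_0$, one computes $(\partial_t - \Delta_\omega)H = -\tr_\omega\omega_0 \leq 0$. The parabolic maximum principle then forces $H \leq H|_{t=0} = 0$, so $\partial_t\varphi \leq (\varphi + nt)/t \leq (C_0 + nT)/t$; this combines with short-time smoothness on $[0,t_0]$ to give a uniform upper bound. For the lower bound I take $H := (T-t)\,\partial_t\varphi + \varphi - \rho$; the identity $\theta_t - (T-t)\,\ric(\omega_0) = \theta_T$ collapses the parabolic computation to
\[
(\partial_t - \Delta_\omega)H \;=\; \tr_\omega(\theta_T + \dc\rho) - n \;\geq\; 2\delta_0\,\tr_\omega\omega_0 - n.
\]
Since $H \equiv +\infty$ on $\NK(\theta_T)$, its minimum is finite. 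At an interior minimum $\tr_\omega\omega_0 \leq n/(2\delta_0)$, and the arithmetic--geometric mean inequality gives $\omega^n/\omega_0^n \geq (2\delta_0)^n$, i.e.\ $\partial_t\varphi \geq n\log(2\delta_0)$ at that point. Substituting back and using (i) yields $H \geq -C_1$ globally, so $(T-t)\,\partial_t\varphi \geq \rho - C$ on all of $[0,T) \times X$.

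The main obstacle is that this last inequality degenerates as $t \to T^-$, so it directly gives the desired $\partial_t\varphi \geq C_0\rho - C_0$ only on an initial subinterval $[0, T - 1/C_0]$. To propagate the bound to the remaining final interval, I will run a complementary maximum-principle argument with $H' := \partial_t\varphi + A(\varphi - \rho)$ on a time interval $[T - \tau, T)$ chosen short enough that $\theta_t + \dc\rho \geq \delta_0\omega_0$ holds outright there. Taking $A$ so large that $A(\theta_t + \dc\rho) - \ric(\omega_0) \geq \omega_0$, the singularity of $-A\rho$ sends $H'$ to $+\infty$ on $\NK(\theta_T)$; the value of $H'$ at $t = T-\tau$ is controlled by the already-proved upper bound on $\partial_t\varphi$ and the lower bound $\varphi - \rho \geq -C_0$. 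The maximum principle then yields a uniform lower bound for $H'$ on $[T-\tau, T) \times X$, translating into $\partial_t\varphi \geq A\rho - C$ there. Stitching the two estimates together, possibly after iterating over finitely many overlapping subintervals to cover the gap, produces the uniform bound on the whole parabolic domain.
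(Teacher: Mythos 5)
Your treatment of (i) and of the upper bound in (ii) is correct and is essentially the standard argument that the paper invokes by citation (Collins--Tosatti, Tosatti--Weinkove): the interpolated potential $\widetilde\rho=\tfrac{t}{T}\rho$ with $\theta_t+\dc\widetilde\rho\geq c_0\omega_X$ uniformly in $t$, the maximum principle applied to $\f-\widetilde\rho+At$, and the classical quantity $t\partial_t\f-\f-nt$ with $(\partial_t-\Delta_\omega)(t\partial_t\f-\f-nt)=-\tr_\omega\omega_0$ are exactly the right tools, and your intermediate estimate $(T-t)\partial_t\f+\f-\rho\geq -C$ is also correctly derived.

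The gap is in the final step, i.e.\ the uniform bound $\partial_t\f\geq C_0\rho-C_0$ up to time $T$, and it is twofold. First, the control of $H'=\partial_t\f+A(\f-\rho)$ at the starting time $T-\tau$ does not follow from what you invoke: an upper bound on $\partial_t\f$ is irrelevant for bounding $H'$ from below, and the only lower bound on $\partial_t\f(T-\tau,\cdot)$ you have at that stage, namely $(\rho-C)/\tau$, degenerates along $\NK(\theta_T)$; plugging it in together with $\f-\rho\geq-C_0$ gives only $H'(T-\tau,\cdot)\geq \rho/\tau-C'$, and the favourable term $-A\rho$ is exactly cancelled by $A\f\geq A\rho-AC_0$, so no choice of $A$ repairs this. (The needed bound is true, but for a different reason: at the fixed time $T-\tau<T$ the flow is smooth on the compact $X$, so $\partial_t\f(T-\tau,\cdot)$ is bounded below by a constant depending only on $\tau$; alternatively, run the argument from $t=0$ with $\widetilde\rho$ in place of $\rho$, where the quantity vanishes identically, using the strengthened form of (i), $\f\geq\widetilde\rho-C$, which your own argument already gives.) Second, and more seriously, ``the maximum principle then yields a uniform lower bound for $H'$'' is not automatic: at an interior minimum the evolution inequality $(\partial_t-\Delta_\omega)H'\geq A\partial_t\f-An+\tr_\omega\omega_0$ only yields $\tr_\omega\omega_0\leq A(n-\partial_t\f)$, i.e.\ an \emph{upper} bound on $\partial_t\f$ at the minimum point, which is the wrong direction. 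The missing idea -- the crux of the proofs in the references the paper cites -- is to combine this with the arithmetic--geometric inequality $\tr_\omega\omega_0\geq n(\omega_0^n/\omega^n)^{1/n}=n\,e^{-\partial_t\f/n}$, so that $n\,e^{-\partial_t\f/n}\leq A(n-\partial_t\f)$ forces $\partial_t\f\geq-C(A,n)$ at the minimum (the exponential beats the linear term), whence $H'\geq-C$ there using $\f-\rho\geq-C_0$. With that step, and with the minimum anchored either at $t=0$ (via $\widetilde\rho$) or at the fixed time $T-\tau$ (via compactness), the proof closes, and your first-stage estimate $(T-t)\partial_t\f\geq\rho-C$ becomes unnecessary; as written, however, the key estimate of the lemma is not established.
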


\begin{proof}
   The proof follows almost verbatim from the K\"ahler case~\cite[Section 4]{collins2015kahler}. We also refer to~\cite[Section 2]{tosatti2013chern}
   or~\cite[Section 5]{dang2023singularities} for a proof.
\end{proof}
\begin{lemma}\label{lem: C2estimate}
    There is a constant $C>0$ such that on $[0,T[\times X$ we have
    \[ \tr_{\omega_0}\omega\leq Ce^{-C\rho}.\]
\end{lemma}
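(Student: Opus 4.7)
The plan is to carry out a maximum-principle Laplacian estimate in the spirit of Tosatti--Weinkove and of Theorem~\ref{thm: metric}, using the $\theta_T$-psh barrier $\rho$ (with almost analytic singularities along $\NK(\theta_T)$ and $\theta_T+\dc\rho\geq 2\delta_0\omega_0$) to absorb the torsion terms. Since $\omega(t)$ is smooth on $[0,T-\varepsilon_0]\times X$ for any $\varepsilon_0>0$, the claim will be trivial there, so I will restrict to $t\in[T-\varepsilon,T[$ with $\varepsilon$ small enough that $\theta_t+\dc\rho\geq\delta_0\omega_0$. Setting $u:=\f-\rho+C_0+1\geq 1$ via Lemma~\ref{lem: bound}(i), the test function will be
\[
Q:=\log\tr_{\omega_0}\omega-Au+e^{-u},
\]
for a constant $A>0$ to be chosen large. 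Since $u\to+\infty$ along $\NK(\theta_T)$, $Q\to-\infty$ there, so $Q$ attains its supremum over $[T-\varepsilon,T[\times X$ at some point $(t_0,x_0)$ with $x_0\in X\setminus\NK(\theta_T)$ (the initial-time case $t_0=T-\varepsilon$ yielding a trivial $O(1)$ bound via smoothness of $\omega(T-\varepsilon)$).

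At an interior maximum $(t_0,x_0)$ I will combine the Tosatti--Weinkove evolution estimate \eqref{eq: c2estimate--TW} for $\log\tr_{\omega_0}\omega$ with
\[
(\partial_t-\Delta_\omega)(\f-\rho)=\log\frac{\omega^n}{\omega_0^n}-n+\tr_\omega(\theta_t+\dc\rho)\geq\log\frac{\omega^n}{\omega_0^n}-n+\delta_0\tr_\omega\omega_0.
\]
The critical-point condition $\partial_{\bar q}Q=0$ together with Cauchy--Schwarz as in \eqref{eq: CS-sing-set} will bound the torsion term by $e^{-u}|\partial u|^2_\omega+C(A+1)^2 e^u\tr_\omega\omega_0/(\tr_{\omega_0}\omega)^2$, and the bad gradient piece cancels against the corresponding one from $(\partial_t-\Delta_\omega)e^{-u}$. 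Choosing $A\delta_0$ larger than the constants from Tosatti--Weinkove's estimate, what remains is
\[
0\leq -\tr_\omega\omega_0+C(A+1)^2 e^u\frac{\tr_\omega\omega_0}{(\tr_{\omega_0}\omega)^2}+(A+1)\Bigl(n-\log\frac{\omega^n}{\omega_0^n}\Bigr)+ne^{-u}.
\]

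From here I will split into two sub-cases at $(t_0,x_0)$. If $(\tr_{\omega_0}\omega)^2\leq 2C(A+1)^2 e^u$ then $\log\tr_{\omega_0}\omega\leq u/2+O(1)$, giving $Q\leq(1/2-A)u+O(1)$, which is bounded above. Otherwise the first two terms force $\tr_\omega\omega_0\leq C(A+1)\bigl(n-\log(\omega^n/\omega_0^n)\bigr)$; Lemma~\ref{lem: bound}(ii) in the form $-\log(\omega^n/\omega_0^n)=-\partial_t\f\leq -C_0\rho+C_0\leq Cu$ then yields $\tr_\omega\omega_0\leq Cu$ at $(t_0,x_0)$. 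Combining with the elementary inequality $\tr_{\omega_0}\omega\leq n\,(\omega^n/\omega_0^n)(\tr_\omega\omega_0)^{n-1}$ and the uniform upper bound $\omega^n/\omega_0^n=e^{\partial_t\f}\leq e^{C_0}$, one obtains $\log\tr_{\omega_0}\omega\leq(n-1)\log u+O(1)$, so $Q\leq(n-1)\log u-Au+O(1)$ is still bounded above. In either case $\sup Q\leq C$ on $[T-\varepsilon,T[\times X$, hence $\log\tr_{\omega_0}\omega\leq Au+C\leq -A\rho+C'$, and relabelling $A$ as the constant in the statement finishes the proof.

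The main obstacle is the failure of $\log(\omega^n/\omega_0^n)=\partial_t\f$ to be uniformly bounded below—a property which is automatic in the K\"ahler setting of Collins--Tosatti, but reduces here only to the degenerate inequality $\partial_t\f\geq C_0\rho-C_0$ of Lemma~\ref{lem: bound}(ii). This is precisely why the one-case argument of Theorem~\ref{thm: metric} does not close directly: the $-\log(\omega^n/\omega_0^n)$ term in the evolution inequality can grow like $|\rho|\sim u$, so I am forced into the two-case analysis above together with the auxiliary inequality between $\tr_{\omega_0}\omega$ and $\tr_\omega\omega_0$ in order to convert the intermediate bound $\tr_\omega\omega_0\leq Cu$ into the desired pointwise bound on $\tr_{\omega_0}\omega$.
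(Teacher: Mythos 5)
Your overall scheme—the Phong--Sturm test function $Q=\log\tr_{\omega_0}\omega-Au+e^{-u}$ with $u=\f-\rho+C_0+1$, the restriction to $t\in[T-\varepsilon,T[$ where $\theta_t+\dc\rho\geq\delta_0\omega_0$, the critical-point/Cauchy--Schwarz treatment of the torsion term with the gradient term absorbed by $(\partial_t-\Delta_\omega)e^{-u}$, and the two-case analysis at the maximum—is exactly the paper's argument. But your second case has a genuine gap: the inequality $-\log(\omega^n/\omega_0^n)=-\partial_t\f\leq-C_0\rho+C_0\leq Cu$ is false at its last step. The quantity $u=\f-\rho+C_0+1$ only controls the \emph{difference} $\f-\rho$, and Lemma~\ref{lem: bound}(i) gives no lower bound on $\f$ other than $\f\geq\rho-C_0$; near $\NK(\theta_T)$ one can have $\f$ within $O(1)$ of $\rho$, so that $u$ stays bounded while $-\rho\to+\infty$. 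Hence $-\rho\leq Cu$ is unavailable, the intermediate bound $\tr_\omega\omega_0\leq Cu$ at $(t_0,x_0)$ is unjustified, and the conclusion $Q\leq(n-1)\log u-Au+O(1)$ does not follow. Nothing in the maximum-point structure rules out that the maximum sits at such a point, so the argument does not close as written.

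The correct way to finish this case (and what the paper does) is not to discard the small factor $\omega^n/\omega_0^n$ when converting the bound on $\tr_\omega\omega_0$ into one on $\tr_{\omega_0}\omega$. From case 2 you have $\tr_\omega\omega_0\leq C\log(\omega_0^n/\omega^n)+C$ at the maximum; plugging this into $\tr_{\omega_0}\omega\leq n\,\frac{\omega^n}{\omega_0^n}(\tr_\omega\omega_0)^{n-1}$ and \emph{keeping} the factor $y=\omega^n/\omega_0^n\leq e^{C_0}$ (Lemma~\ref{lem: bound}(ii)), the potentially unbounded $(\log(\omega_0^n/\omega^n))^{n-1}$ is tamed by the elementary fact that $y\mapsto y\,|\log y|^{n-1}$ is bounded as $y\to0^+$. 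This gives $\tr_{\omega_0}\omega\leq C'$ outright at $(t_0,x_0)$, whence $Q(t_0,x_0)\leq\log C'-A+1$ since $u\geq1$, and the stated estimate $\tr_{\omega_0}\omega\leq Ce^{-C\rho}$ follows. With this replacement your proof coincides with the paper's; without it, the bound on $-\rho$ in terms of $u$ is the missing (and unprovable) ingredient.
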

\begin{proof}
    The proof relies on the trick of Phong--Sturm~\cite{phong2010dirichlet}, we repeat the estimates derived in the proof of~\cite[Lemma 4.1]{tosatti2015evolution} with minor modifications. Set $\psi\coloneqq\f-\rho+C_0\geq 0$, so  $e^{-\psi}\in ]0,1]$.
		We compute the evolution of \[Q\coloneqq \log\tr_{\omega_0}\omega-A\psi+e^{-\psi}\]
		for $A>0$ to be determined hereafter. 
		It suffices to show that $Q$ is uniformly bounded from above. We observe that $Q\leq C$ on $[0,T-\varepsilon]\times X$ for a uniform $C>0$. Fixing $T-\varepsilon<T'<T$, suppose that $Q$ attains its maximum at some point $(t_0,x_0)\in  [0,T']\times X$ with $t\in ]T -\varepsilon,T']$. In what follows, we compute at this point. 
  % From~\cite[Prop. 3.1]{tosatti2015evolution} (also~\cite[(4.2)]{tosatti2015evolution}) we have
		% \begin{equation}\label{eq: c2estimate--TW}
		%     \left(\frac{\partial}{\partial t}-\Delta_{\omega}\right)\log\tr_{\omega_0}\omega\leq \frac{2}{(\tr_{\omega_0}\omega)^2}\textrm{Re}(g^{\Bar{q}k}(T_0)^p_{kp}\partial_{\Bar{q}}\tr_{\omega_0}\omega)+C\tr_\omega \omega_0,
		% \end{equation}
		% where $(T_0)^p_{kp}$ denote the torsion terms corresponding to $\omega_0$. 
  At the maximum point $(t_0,x_0)$ of $Q$ we have $\partial_iQ=0$ hence \[\frac{1}{\tr_{\omega_0}\omega}\partial_i\tr_{\omega_0}\omega-A\partial_i\psi-e^{-\psi}\partial_i\psi=0.\]
		Therefore, the Cauchy--Schwarz inequality yields
		\begin{equation}\label{eq: c2estimate-CS}
		\begin{split}
		\left| \frac{2}{(\tr_{\omega_0}\omega)^2}\textrm{Re}(g^{\Bar{q}k}(T_0)^p_{kp}\partial_{\Bar{q}}\tr_{\omega_0}\omega)\right|&\leq \left| \frac{2}{\tr_{\omega_0}\omega}\textrm{Re}((A+e^{-\psi})g^{\Bar{q}k}(T_0)^p_{kp}\partial_{\Bar{q}}\psi\right|\\
		&\leq e^{-\psi}|\partial\psi|^2_\omega+C(A+1)^2e^\psi\frac{\tr_\omega \omega_0}{(\tr_{\omega_0}\omega)^2}
		\end{split}
		\end{equation} for uniform $C>0$ only depending on the torsion term.
		It thus follows from~\eqref{eq: c2estimate--TW} and~\eqref{eq: c2estimate-CS} that, at $(t_0,x_0)$,
		\begin{equation}\label{eq: c2Q}
		\begin{split}
		0\leq \left(\frac{\partial}{\partial t}-\Delta_{\omega}\right)Q &\leq C(A+1)^2e^\psi\frac{\tr_\omega \omega_0}{(\tr_{\omega_0}\omega)^2}+C\tr_\omega \omega_0\\
		&\quad -(A+e^{-\psi})\partial_t{\f} +(A+e^{-\psi})\tr_\omega(\omega -(\theta_t+\dc\rho))\\
		&\leq C(A+1)^2e^\psi \frac{\tr_\omega \omega_0}{(\tr_{\omega_0}\omega)^2}+(C-A\delta_0)\tr_\omega \omega_0+(A+1)\log\frac{\omega_0^n}{\omega^n},
		\end{split}
		\end{equation} where we have used $\theta_t+\dc\rho\geq \delta_0\omega_0$. If at $(t_0,x_0)$,  $(\tr_{\omega_0}   \omega)^2\leq e^\psi C(A+1)^2$  then at the same point we obtain \[Q\leq C+\frac{1}{2}\psi-A\psi+e^{-\psi}\leq C+1\] since $\psi\geq 0$, we are done.
		Otherwise, we choose $A=\delta_0^{-1}(C+2)$. Hence, from~\eqref{eq: c2Q} one gets at the maximum of $Q$,
		\[\tr_{\omega}\omega_0\leq C\log\frac{\omega_0^n}{\omega^n}+C,\]
		so
		\[ \tr_{\omega_0}   \omega\leq n(\tr_\omega\omega_0)^{n-1}\frac{\omega^n}{\omega_0^n}\leq C\frac{\omega^n}{\omega_0^n}\left(\log\frac{\omega_0^n}{\omega^n}\right)^{n-1}+C\leq C'\]since $\omega^n/\omega_0^n\leq C_0$ by Lemma~\ref{lem: bound} and $y\mapsto y|\log y|^{n-1}$ is  bounded from above as $y\to 0$. Thanks to Lemma~\ref{lem: bound} $(iii)$, $Q$ is bounded from above at its maximum,  finishing the proof.
\end{proof}
\begin{proof}[Proof of Theorem~\ref{thm: singularity-flow}]
We apply Lemmas~\ref{lem: bound} and~\ref{lem: C2estimate} to obtain that for any compact set $K\subset X\backslash\NK(\theta_T)$, $k\in\mathbb{N}$, there is a constant $C_{K,k}>0$ such that 
    \[ \|\f(t)\|_{\mathcal{C}^k(K)}\leq C_{K,k},\]
     using the complex parabolic Evans--Krylov theory together with Schauder’s estimate; see also Sherman--Weinkove's local curvature estimates~\cite{sherman-weinkove13-estimates}. This implies that for some constant $C_K>0$, \[ \frac{\partial}{\partial t}(\f(t)+C_Kt)\geq 0,\]
     hence $\f(t)+C_Kt$ is increasing in $t$ as well as being bounded from above. This yields that $\f(t)$ has a limit as $t\to T^-$, denoted by $\f_T$. An elementary argument shows that $\f(t)$ converges in $\mathcal{C}^\infty$ on compact subsets of $X\backslash\NK(\theta_T)$. By weak compactness, $\f_T$ extends to a $\theta_T$-psh function on $X$ and
     $\f(t)$ converges to $\f_T$ in $L^1(X)$. The current $\omega_T$ can be expressed as $\omega_T=\theta_T+\dc\f_T$. 
    % \[C_K^{-1}\omega_0\leq \omega(t)\leq C_K\omega_0\;\text{on}\,[0,T)\times K.\]
    % We then apply the local higher order estimates of Sherman--Weinkove~\cite{sherman-weinkove13-estimates}  to obtain uniform $\mathcal{C}^\infty$ estimates for $\omega(t)$ on any compact subset $K$ of $X\backslash\NK(\theta_T)$. 

    Next, we show that $\f_T$ has minimal singularities, that is, for any $\theta_t$-psh function $u$ there is a constant $C>0$ such that \[\f\geq u-C\quad\text{on}\, [0,T[\times X. \]
    Thanks to Demailly's regularization theorem (Theorem~\ref{thm: regularization}), we can find functions with logarithmic singularities $u_m$ decreasing to $u$ such that
    \[\theta_T+\dc u_m\geq -\varepsilon_m\omega_0,\quad\varepsilon_m\searrow 0. \]
We
set $$G=(T-t)\partial_t\f+\f+nt-\varepsilon_m(t\partial_t \f-\f-nt)-u_m.$$
On $X\backslash\Z(u_m)$, $G$ is smooth and
\[ \left(\frac{\partial}{\partial t}-\Delta_\omega \right)G\coloneqq\tr_{\omega}(\theta_T+\varepsilon_m\omega_0+\dc u_m)\geq 0.\]
Since $\sup_X u_m\leq C$ uniformly, so $G$ achieves its minimum in $ X\backslash \Z(u_m)$. Hence, the minimum principle implies that $G\geq -C$ independent of $m$, so
\[ (T-t-\varepsilon_m)\partial_t \f+(1+\varepsilon_m)\f(t)\geq u_m-C\geq u-C.\]
We take $m\to+\infty$ and use the fact that $\partial_t\f \leq C_0$ to conclude.
\end{proof}

\begin{remark}
   The current $\omega_T$ has minimal singularities but is not a Hermitian current. In fact, we write $\omega_T=\theta_T+\dc \varphi_T$. If $\omega_T$ were Hermitian, we would find that the current $\theta_T+\dc (1-\varepsilon)\varphi_T$ is Hermitian for $\varepsilon>0$ sufficiently small. By minimality, we have $\varphi_T+C \geq (1-\varepsilon)\varphi_T$ for a constant $C>0$ depending on $\varphi_T$, so $\varphi_T\geq -C\varepsilon^{-1}$ is bounded. This implies that $\Z(\f_T)=\varnothing$, hence the $\ddbar$-class $[\theta_T]$ is Hermitian, a contradiction.
\end{remark}
% \begin{proof}[Proof of Theorem~\ref{thm-parabolic}]
%     By assumption, $v_+(X,\omega_X)<+\infty$, $\widehat v_-(X,\theta_T)>0$, since $\theta_T$ is nef, hence $\theta_T$ is big. By Theorem~\ref{thm: singularity-flow}, there are no singularities developing outside of $\NK(\theta_T)$. Theorem~\ref{thmA} shows that $\NK(\theta)=N(\omega(t))$, so it suffices to show that the metrics $\omega(t)$ developed singularities along $N(\omega(t))$. Suppose by contradiction that there is a point $x\in N(\omega(t))$ which does not belong to the singularity set $\Z(\omega(t))$.
% \end{proof}

\begin{theorem}
    Let $(X,\omega_0)$ be a compact Hermitian manifold of dimension $n$. Let $\omega(t)$ be a solution of the Chern--Ricci flow starting at $\omega_0$, which develops
a singularity at finite time $T$. Assume that the limiting form $\theta_T=\omega_0-T\ric(\omega_0)$ satisfies $\widehat v_-(X,\theta_T)>0$.
Then if either 
\begin{itemize}
    \item $n=2$,
   \item  or $n\geq 3$, $\dc\omega_0=0$, ${\rm d}\omega_0\wedge {\rm d^c}\omega_0=0$, 
\end{itemize}
then the metrics $\omega(t)$ develop singularities precisely along $N(\omega(t))=\NK(\theta_T)$.
\end{theorem}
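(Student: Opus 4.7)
My plan is to establish $\Z(\omega(t))=\NK(\theta_T)=N(\omega(t))$ in three stages: verify condition~\eqref{H} for $\omega_0$, apply Theorem~\ref{thmA} to $\theta_T$, and use Monge--Amp\`ere mass preservation to prove the remaining inclusion. In the surface case, Gauduchon's theorem produces a pluriclosed Hermitian metric on $X$, which satisfies~\eqref{H} by~\cite[Theorem 3.3]{angella2022plurisigned}; since~\eqref{H} depends only on the complex structure of $X$ (see~\cite[Prop. 3.2]{guedj2022quasi2}), it transfers to $\omega_0$. In higher dimensions the Guan--Li hypothesis on $\omega_0$ yields~\eqref{H} directly via Chiose's invariance of Monge--Amp\`ere masses together with its restriction to smooth closed submanifolds.

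With~\eqref{H} in hand, Theorem~\ref{thmA} applied to the nef form $\theta_T$ (which satisfies $\widehat v_-(X,\theta_T)>0$ by hypothesis) gives that $[\theta_T]$ is big and
\[\NK(\theta_T)=\Null(\theta_T)=\bigcup_{\widehat v_-(V,\theta_T|_V)=0}V.\]
Using the identity $(1-\varepsilon)\theta_T+\varepsilon\omega_0=\theta_{(1-\varepsilon)T}$ and the $\ddbar$-class invariance of $v_-$, one obtains $\widehat v_-(V,\theta_T|_V)=\inf_{t\to T^-}v_-(V,\omega(t)|_V)$, identifying the right-hand union with $N(\omega(t))$ and hence $N(\omega(t))=\NK(\theta_T)$. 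The inclusion $\Z(\omega(t))\subset\NK(\theta_T)$ is immediate from Theorem~\ref{thm: singularity-flow}, which asserts smooth convergence of $\omega(t)$ to the Hermitian metric $\omega_T$ on $X\setminus\NK(\theta_T)$.

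For the remaining inclusion $\NK(\theta_T)\subset\Z(\omega(t))$, let $V$ be an irreducible component of $\NK(\theta_T)$, so that $\widehat v_-(V,\theta_T|_V)=0$. In both cases of the theorem, Monge--Amp\`ere masses are preserved within the class $[\omega(t)|_V]$: for $n=2$ the subvariety $V$ is a curve on which any two $\ddbar$-cohomologous Hermitian forms have equal integrals by Stokes; for $n\geq 3$ the Guan--Li condition on $\omega_0$ is preserved by the Chern--Ricci flow and descends to $V$ (via an embedded resolution $\pi\colon V'\to V$ when $V$ is singular). Consequently $v_-(V,\omega(t)|_V)=\int_V\omega(t)^{\dim V}$, and this integral tends to $0$ along some sequence $t_k\to T^-$. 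Suppose for contradiction $V\not\subset\Z(\omega(t))$; since $V\setminus\Z(\omega(t))$ is non-empty and open in $V$ while $V_{\rm reg}$ is dense, one can pick $x\in V_{\rm reg}\setminus\Z(\omega(t))$. Then $\omega(t)\to\omega_T$ smoothly on a neighborhood $U$ of $x$, forcing
\[\int_V\omega(t_k)^{\dim V}\geq\int_{V\cap U}\omega(t_k)^{\dim V}\longrightarrow\int_{V\cap U}\omega_T^{\dim V}>0,\]
contradicting the vanishing of the left-hand side. The main subtlety is transporting both the volume identity $v_-(V,\omega(t)|_V)=\int_V\omega(t)^{\dim V}$ and the local integral comparison near $x$ through the desingularization $\pi$ when $V$ is singular, which requires careful bookkeeping of the Guan--Li condition on $V'$ and of the exceptional locus of $\pi$.
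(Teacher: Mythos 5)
Your proposal is correct and follows essentially the same route as the paper: verify condition~\eqref{H} for $\omega_0$ (Gauduchon plus metric-independence when $n=2$, the Guan--Li condition when $n\geq 3$), invoke Theorem~\ref{thmA} together with Theorem~\ref{thm: singularity-flow} to get $\Z(\omega(t))\subset \NK(\theta_T)=N(\omega(t))$, and prove the reverse inclusion by the same contradiction, playing preservation of Monge--Amp\`ere volumes (Stokes on curves for $n=2$, the flow-preserved Guan--Li condition and Lemma~\ref{lem: GL4.4} for $n\geq 3$) against local smooth convergence near a point of $V$ outside $\Z(\omega(t))$. The only quibble is that the set equality in Theorem~\ref{thmA} does not by itself give that every irreducible component $V$ of $\NK(\theta_T)$ satisfies $\widehat v_-(V,\theta_T|_V)=0$ (a component could a priori be covered by the union without being one of its members); this is harmless, since your contradiction argument applies verbatim to the subvarieties appearing in the union defining $N(\omega(t))$, which is all that is needed and is exactly how the paper argues.
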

Recall here that the set $N(\omega(t))$ is defined by~\eqref{def: union--zeroset}. We will see in the proof that $N(\omega(t))$ is the set-theoretic union of all such subvarieties for which the volume shrinks to zero as $t\to T^-$. This, therefore, partially answers the question of Tosatti--Weinkove~\cite[Question 6.1]{tosatti2022chern}.
\begin{proof} By assumption, the metric $\omega_0$ always satisfies the condition~\eqref{H}. Since $\theta_T$ is nef and $\widehat v_-(X,\theta_T)>0$, so $\theta_T$ is big due to Theorem~\ref{thm: guedjlu}.
  Thanks to Theorems~\ref{thm: singularity-flow} and~\ref{thm: null=nb},  we have $$\Z(\omega(t))\subset \NK(\theta_T)=N(\omega(t)).$$ It suffices to show that the metrics $\omega(t)$ develop singularities along $N(\omega(t))$. Suppose by contradiction that there is a point $x\in N(\omega(t))$ with a neighborhood $U\ni x$ in which, as $t\to T^-$, the metrics $\omega(t)$ smoothly converge to a Hermitian metric $\omega_U$ and there is a positive-dimensional irreducible subvariety $V\subset X$ containing $x$ and $v_-(V,\omega(t)|_V)\to 0$ as $t\to T^-$. 
    We divide it into two cases.

    If $n=2$, we must have $\dim V=1$ because $\widehat v_-(X,\theta_T)>0$. Then
    \[v_-(V,\omega(t)|_V)=\int_V\omega(t)\geq \int_{V\cap U}\omega(t)\xrightarrow{t\to T^-} \int_{V\cap U}\omega_U>0, \]
  a contradiction.

    If $n\geq 3$ and $\dc\omega_0=0$ and ${\rm d}\omega_0\wedge {\rm d^c}\omega_0=0$ then $\dc\omega_0^k=0$, $\forall\, 1\leq k\leq n-1$. The latter condition is preserved along the Chern--Ricci flow, i.e., $\dc\omega(t)^k=0$ for all $1\leq k\leq n-1$. Then we have $$v_-(V,\omega(t)|_V)=\int_V\omega(t)^{\dim V},$$ for every irreducible subvariety $V\subset X$, due to Lemma~\ref{lem: GL4.4}. On the other hand, 
    \[\int_V\omega(t)^{\dim V}\geq \int_{V\cap U}\omega(t)^{\dim V}\xrightarrow{t\to T^-} \int_{V\cap U}\omega_U^{\dim V}>0, \]
 a contradiction.
\end{proof}
In the above setup, 
we show that the Chern--Ricci flow performs a canonical surgical contraction.
\begin{theorem} Let $X$ be compact Hermitian manifolds of dimension $n$, equipped with a Hermitian metric $\omega_0$ with $\dc\omega_0=0$ and ${\rm d}\omega_0\wedge {\rm d^c}\omega_0=0$.   Assume that the solution $\omega(t)$ of the Chern--Ricci flow~\eqref{crf} starting at $\omega_0$ exists on $[0,T[$ with $T<\infty$ and the limiting form $\theta_T=\omega_0-T\ric(\omega_0)$ satisfies $\int_X\theta_T^n>0$. 
Assume there exists a map
    $\pi:X\to Y$ blowing down disjoint irreducible exceptional divisors $E_i$ on $X$ to points $y_i\in Y$ for $i=1,\dots,p$, with $Y$ a compact Hermitian manifold, such that there exist a function $f\in\mathcal{C}^\infty(X,\mathbb{R})$ and a smooth real (1,1)-form $\eta$ on $Y$ with
    \begin{equation*}\label{asterisk} \tag{$\ast$}
        \theta_T+\dc f=\pi^*\eta.
    \end{equation*}
    Then we have the following 
    \begin{enumerate}[label=(\roman*)]
        \item As $t\to T^-$, the metrics $\omega(t)$ converge to a smooth Hermitian metric $\omega_T$ on $X^\circ:=X\backslash \bigcup_{i=1}^p E_i$ in $\mathcal{C}^\infty_{\rm loc}(X^\circ)$. 
        Using $\pi$,  $\omega_T$ can be regarded as a Hermitian metric on $Y^\circ:=Y\backslash \{y_1,\dots,y_p\}$.
        \item  There exists a unique metric $d_T$ on $Y$ such that $(Y,d_T)$ is a compact metric space. 
        %Let $d_{\omega_T}$ be the distance function induced by $\omega_T$ on $Y^\circ$. Then there exists a unique metric $d_T$ on $Y$ extending $d_{\omega_T}$ to be zero on $\{y_1,\dots,y_p\}$ such that $(Y,d_T)$ is a compact metric space homeomorphic to $Y$. 
        % and $(Y,d_T)$ is the metric completion of $(Y^\circ, d_{\omega_T})$.
        \item  The metric space $(X,\omega(t))\rightarrow (Y,d_T)$ as $t\to T^-$ in the Gromov--Hausdoff sense.
        \item  A smooth maximal solution $g(t)$ of the Chern--Ricci flow on $Y$ starting at $\omega_T$ exists for $t\in]T,T_Y[$ with $T<T_Y\leq\infty$, and $g(t)$ converges to $\omega_T$ in $\mathcal{C}^\infty_{\rm loc}(Y^\circ)$ as $t\to T^+$.  
        \item The metric space $(Y,g(t))\rightarrow (Y,d_T)$  as $t\to T^+$ in the Gromov--Hausdorff sense.
    \end{enumerate}
\end{theorem}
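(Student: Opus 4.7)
The plan is to adapt the canonical surgical contraction strategy of Tosatti--Weinkove~\cite{tosatti2013chern}, T\^o~\cite{to2018regularizing}, and Nie~\cite{nie2017weak} to the Guan--Li setting. The starting observation is that the Guan--Li assumption combined with Lemma~\ref{lem: GL4.4} gives $\widehat v_-(X,\theta_T)=\int_X\theta_T^n>0$, so $\theta_T$ is big by Theorem~\ref{thm: guedjlu}. The assumption $\theta_T+\dc f=\pi^*\eta$ identifies $\theta_T$ with a pullback from $Y$ modulo a $\dc$-exact form; since $\pi$ is biholomorphic on $X^\circ$ and $\theta_T$ is smooth and positive there, $\eta$ descends to a smooth Hermitian form on $Y^\circ$ and extends smoothly across $\{y_1,\dots,y_p\}$ (the Nakai--Moishezon criterion Theorem~\ref{thm: N-M criterion}, applied on $Y$, guarantees that $[\eta]$ is Hermitian). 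Theorem~\ref{thm: null=nb} then yields $\NK(\theta_T)=\Null(\theta_T)=\bigcup_i E_i$, so Part~(i) is immediate from Theorem~\ref{thm: singularity-flow}: writing $\omega_T=\pi^*\eta+\dc(\f_T-f)$ on $X^\circ$ produces a smooth Hermitian form that descends via $\pi$ to a Hermitian metric on $Y^\circ$.

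For Parts~(ii) and~(iii), the essential additional input is a diameter estimate controlling the shrinking of the contracted divisors. I would first establish a uniform bound $\diam(X,\omega(t))\leq C$ on $[0,T[$, and then combine it with the smooth convergence from Part~(i) to run a Moser-type iteration yielding $\diam(E_i,\omega(t))\to 0$ as $t\to T^-$. The Guan--Li identities $\dc\omega(t)^k=0$ for $1\leq k\leq n-1$, preserved along the flow, play the role of closedness used in the K\"ahler case to justify the integration-by-parts formulas underlying the iteration. Granting this, one defines $d_T$ by setting $d_T(y_i,y_j)=0$ for contracted points and $d_T(y_i,y)=\lim_{x\to\pi^{-1}(y_i)}d_{\omega_T}(x,\pi^{-1}(y))$ otherwise, extending $d_{\omega_T}$ continuously to all of $Y$; standard arguments (cf.~\cite[Sect.~3]{tosatti2013chern}) then show $(Y,d_T)$ is compact and homeomorphic to $Y$, and the Gromov--Hausdorff convergence in~(iii) follows from the smooth convergence on compact subsets of $X^\circ$ together with the vanishing diameter of the $E_i$'s.

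For Part~(iv), pushing $\omega_T$ forward via $\pi$ yields a positive current $\pi_*\omega_T=\eta+\dc\psi_T$ on $Y$ in the Hermitian class $[\eta]$, smooth on $Y^\circ$, with $\psi_T$ bounded on $Y$ (a consequence of the minimal singularities of $\f_T$ established in Theorem~\ref{thm: singularity-flow} combined with the contraction of each $E_i$ to a point). Using the weak Chern--Ricci flow theory of T\^o~\cite{to2018regularizing} and Nie~\cite{nie2017weak}, one starts the flow on $Y$ from $\pi_*\omega_T$, obtaining $g(t)$ on $t\in]T,T_Y[$ with $T_Y$ determined by the Nakai--Moishezon criterion applied to $\eta-(t-T)\ric(\eta)$. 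Smoothness of $g(t)$ on $Y$ for $t>T$ follows from local parabolic estimates analogous to those of Section~\ref{sect: crf}, and the smooth convergence $g(t)\to\omega_T$ on $Y^\circ$ as $t\to T^+$ follows by matching initial data through $\pi$. Part~(v) is then established by the same diameter argument as in~(iii), applied to the forward flow $g(t)$ on $Y$.

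The principal technical obstacle is the diameter estimate $\diam(E_i,\omega(t))\to 0$: in the K\"ahler case this relies on refined techniques based on closed-form identities, whereas in the Guan--Li setting one must carefully control torsion contributions in the Moser iteration and in the parabolic scalar curvature estimates of~\cite{gill-smith14-chernricci}. A secondary difficulty is verifying that $\pi_*\omega_T$ is a genuinely admissible initial datum for the Chern--Ricci flow on $Y$ under the Guan--Li constraint; this may require either regularizing $\psi_T$ by monotone approximation through smooth Hermitian metrics, or an extension of the weak flow theory to non-closed reference forms compatible with the Guan--Li framework.
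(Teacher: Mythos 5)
Your treatment of (i), and the general idea of pushing $\omega_T$ forward and citing Nie and T\^o for (iv)--(v), is in line with the paper; but for (ii)--(iii) there is a genuine gap. You propose to establish the Gromov--Hausdorff statements by proving a uniform diameter bound and a Moser-type iteration giving $\diam(E_i,\omega(t))\to 0$, and you yourself flag exactly this estimate as the ``principal technical obstacle'' whose torsion terms you do not control; as written, (ii) and (iii) are therefore not proved. The point you miss is that this analysis is unnecessary. The paper's only new input here is to show, via its Nakai--Moishezon criterion (Theorem~\ref{thm: Nakai-Moishezon}), that the $\ddbar$-class $[\eta]$ is Hermitian on $Y$: one checks that $\eta$ is nef and that for every positive-dimensional irreducible $V\subset Y$,
\[
\int_V\eta^{k}=\int_{\pi^{-1}(V)}(\pi^*\eta)^{k}=\int_{\pi^{-1}(V)}\theta_T^{k}>0,
\]
using the Guan--Li property and $\Null(\theta_T)=\bigcup_i E_i$. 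Consequently condition~\eqref{asterisk} holds with a genuine Hermitian metric $\omega_Y$ on $Y$ in place of $\eta$, and this places the flow precisely under the hypotheses of Tosatti--Weinkove's canonical surgical contraction theorem \cite[Theorem 1.3]{tosatti2013chern}, which is already proved in the Hermitian (non-K\"ahler) category and contains the diameter estimates and Gromov--Hausdorff convergence you were planning to re-derive. Parts (ii)--(iii) are then citations, not new analysis. Likewise for (iv)--(v): since $\omega_T$ has minimal singularities (Theorem~\ref{thm: singularity-flow}), its potential is bounded, so $\pi_*\omega_T=\omega_Y+\dc\psi_T$ with $\psi_T$ bounded, $\omega_Y$-psh and smooth on $Y^\circ$, which is exactly the admissible initial datum in Nie and T\^o; your concern about ``extending the weak flow theory to non-closed reference forms'' does not arise, because the reference form on $Y$ is the Hermitian metric $\omega_Y$, not $\eta$.

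Two secondary inaccuracies in your reduction. First, $\theta_T$ is not ``smooth and positive on $X^\circ$''; it is a fixed smooth form with no pointwise positivity, and what is positive on $X^\circ$ is the limit current $\omega_T$. Second, there is nothing to ``descend and extend smoothly across $\{y_1,\dots,y_p\}$'': $\eta$ is already a given smooth form on all of $Y$, and the issue is not its pointwise positivity but whether its $\ddbar$-class is Hermitian. For that you invoke the Nakai--Moishezon criterion without verifying its hypotheses on $Y$ (nefness of $\eta$ and positivity of the relevant volumes/intersection numbers for all subvarieties of $Y$); this verification, via the pullback computation above, is precisely where the hypotheses $\int_X\theta_T^n>0$, the Guan--Li condition, and the identification $\Null(\theta_T)=\bigcup_iE_i$ are used, and it is the actual new content of the proof.
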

\begin{proof}
    We observe that $\dc\omega_0^k=0$ for all $1\leq k\leq n-1$, this property is preserved along the Chern--Ricci flow. Since $\theta_T$ is nef and $\int_X\theta_T^n>0$ we have that $\theta_T$ is big. By the previous analysis, together with assumptions, we know that $$\NK(\theta_T)=\Null(\theta_T)=\bigcup_{1\leq i\leq p}E_i.$$
    Then $(i)$ follows immediately from Theorem~\ref{thm: singularity-flow}.

    We show that in the condition~\eqref{asterisk} the form $\eta$ can be replaced with a Hermitian metric $\omega_Y$ on $Y$. 
    Indeed, it is obvious that $\eta$ is nef and
% and \[\int_Y\eta^n=\int_X(\pi^*\eta)^n=\int_X\theta_T^n>0 \] so it is big. 
    for any irreducible subvariety $V\subset Y$ with $k=\dim V>0$, we have that  \[\int_V\eta^{k }=\int_{\pi^{-1}(V)}(\pi^*\eta)^k=\int_{\pi^{-1}(V)}\theta_T^k>0. \]
    Thanks to Nakai--Moishezon's criterion (Theorem~\ref{thm: Nakai-Moishezon}) the $\ddbar$-class $[\eta]$ is Hermitian on $Y$. We thus obtain $(ii)$ and $(iii)$ as follows from~\cite[Theorem 1.3]{tosatti2013chern}. 

    We have seen that $\omega_T$ is a positive current with minimal singularities, so it has bounded potentials.
   The push forward $\pi_*\omega_T$ is therefore a positive current on $Y$ with bounded potentials. In this case, we have $\pi_*\omega_T=\omega_Y+\dc\psi_T$ where $\psi_T$ is a bounded $\omega_Y$-psh function, which is smooth in $Y^\circ$. With this condition, the $(iv)$ and $(v)$ was proved by Nie~\cite{nie2017weak} and T\^o~\cite{to2018regularizing} independently.
\end{proof}
The major problems are whether the metric space $(Y,d_T)$ is the metric completion of $(Y^\circ,d_{\omega_T})$, and we can remove the condition~\eqref{asterisk}. Those may require new techniques. 
\bibliographystyle{alpha}
	\bibliography{bibfile}	
\end{document}